\begin{document}

%%%%%%%%%%%%%%%%%%%%%%%%%%%%%%%%%%%
\newtheorem{theorem}{Theorem}%[section]
\newtheorem{proposition}{Proposition}%[section]
\newtheorem{lemma}{Lemma}%[section]
\newtheorem{corollary}{Corollary}%[section]%%
\newtheorem{definition}{Definition}%[section]
\newtheorem{remark}{Remark}%[section]
\newtheorem{remarks}{Remarks}%[section]
%%%%%%%%%%%%%%%%%%%%%%%%%%%%%%%%%%%
\numberwithin{equation}{section} \numberwithin{theorem}{section}
\numberwithin{proposition}{section} \numberwithin{lemma}{section}
\numberwithin{corollary}{section}
\numberwithin{definition}{section} \numberwithin{remark}{section}
%%%%%%%%%%%%%%%%%%%%%%%%%%%%%%%%%%%%%%%%%%%%

\title{\textbf{Existence of positive solutions for a Brezis--Nirenberg type problem involving an inverse operator}}

\author[P. \'Alvarez-Caudevilla]{P. \'Alvarez-Caudevilla}
\address{Departamento de Matem\'aticas, Universidad Carlos III de Madrid,
Av. Universidad 30, 28911 Legan\'es (Madrid), Spain}
\email{pacaudev@math.uc3m.es}

\author[E. Colorado]{E. Colorado}
\address{Departamento de Matem\'aticas, Universidad Carlos III de Madrid,
Av. Universidad 30, 28911 Legan\'es (Madrid), Spain}
\email{ecolorad@math.uc3m.es}

\author[A. Ortega]{A. Ortega}
\address{Departamento de Matem\'aticas, Universidad Carlos III de Madrid,
Av. Universidad 30, 28911 Legan\'es (Madrid), Spain}
\email{alortega@math.uc3m.es}

\thanks{All authors have been partially supported by the Ministry of Economy and Competitiveness of
Spain and FEDER under research project MTM2016-80618-P}
\thanks{The first author was also partially supported by the Ministry of Economy and Competitiveness of
Spain under research projects RYC-2014-15284.}

\date{\today}

%%%%%%%%%%%%%%%%%%%%%%%%%%%

\begin{abstract}
This paper is devoted to the existence of positive solutions for a problem related to a fourth-order differential
equation involving a nonlinear term depending on a second order differential operator,
$$(-\Delta)^2 u=\lambda u+ (-\Delta)|u|^{p-1}u,$$
in a bounded domain $\Omega\subset\mathbb{R}^N$, $N\geq 7$, and assuming homogeneous
Navier boundary conditions. In particular, we study a second order equation involving a nonlocal term of the form,
$$-\Delta u=\lambda (-\Delta)^{-1} u+|u|^{p-1}u,$$
under Dirichlet boundary conditions and we prove the existence of
positive solutions depending on the positive real parameter $\lambda>0$, up to the
critical value of the exponent $p$, i.e., when $1<p\leq 2^*-1$, where $2^*=\frac{2N}{N-2}$
is the critical Sobolev exponent. For $p=2^*-1$, this equivalence
leads us to a Brezis--Nirenberg type problem, cf. \cite{BN}, but,
in our particular case, the linear term is a nonlocal term. The effect that
this nonlocal term has on the equation changes the dimensions for which the classical
technique based on the minimizers of the Sobolev constant ensures the existence of
solution, going from dimensions $N\geq 4$ in the classical Brezis-Nirenberg problem,
to dimensions $N\geq7$ for this nonlocal problem.
\end{abstract}
\maketitle

\noindent {\it \footnotesize 2010 Mathematics Subject Classification}. {\scriptsize 35G20, 35A15, 35B38, 35J91.}\\
{\it \footnotesize Key words}. {\scriptsize Cahn--Hilliard equation, Critical Problem, Concentration-Compactness Principle, Mountain Pass Theorem.}

%%%%%%%%%%%%%%%%%%%%%%%%%%%%%%%%%%%%%%%%%%%%%%%%%%%%%%%%%%%%%%%%%%%%
%%%%%%%%%%%%%%%%%%%%%%%%%%%%%%%%%%%%%%%%%%%%%%%%%%%%%%%%%%%%%%%%%%%%
%%%%%%%%%%%%%%%%%%%%%%%%%%%%%%%%%%%%%%%%%%%%%%%%%%%%%%%%%%%%%%%%%%%%
\section{Introduction}\label{sec:intro}
%%%%%%%%%%%%%%%%%%%%%%%%%%%%%%%%%%%%%%%%%%%%%%%%%%%%%%%%%%%%%%%%%%%%
%%%%%%%%%%%%%%%%%%%%%%%%%%%%%%%%%%%%%%%%%%%%%%%%%%%%%%%%%%%%%%%%%%%%
%%%%%%%%%%%%%%%%%%%%%%%%%%%%%%%%%%%%%%%%%%%%%%%%%%%%%%%%%%%%%%%%%%%%
\noindent In this work, we analyze the existence of positive solutions of a problem derived from the following fourth-order
equation under homogeneous Navier boundary conditions,
\begin{equation}\label{eq:CH}
\left\{\begin{array}{rll}
(-\Delta)^2 u  &=  \gamma u+ (-\Delta)|u|^{p-1}u&\quad\mbox{in}\quad \Omega\subset \mathbb{R}^{N},\\
u&=0  & \quad\mbox{on}\quad \partial\Omega,\\
-\Delta u&=0 & \quad\mbox{on}\quad \partial\Omega,
\end{array}\right.
\tag{$P^2_\gamma$}
\end{equation}
where $\gamma$ is a positive real parameter and $\Omega$ is a smooth bounded domain of $\mathbb{R}$, with $N\geq 7$. This important fact on the dimension will be under review along this work. In particular, positive solutions of \eqref{eq:CH} can be seen as positive steady-state solutions
 of the {\em fourth-order parabolic Cahn--Hilliard type equation},
\begin{equation*}
\frac{\partial u}{\partial t}+(-\Delta)^2 u= \gamma u +(-\Delta)|u|^{p-1}u,\quad\mbox{in}\quad \Omega\times\mathbb{R}_+,
\end{equation*}
assuming bounded smooth initial data $u(x,0)=u_0(x)$. The latter equation has been previously studied
in \cite{PV,AEGnegII} for bounded domains or the whole $\mathbb{R}^N$ but considering exponents $p$ in the subcritical range $1<p<2^*-1$, where $2^*=\frac{2N}{N-2}$ is the critical exponent of the embedding $H_0^1(\Omega) \hookrightarrow L^{p+1}(\Omega)$. In this work we extend the former range and we consider exponents $1<p\leq 2^*-1$, covering the critical exponent case. Let us recall that, because of the Sobolev Embedding Theorem, we have the compact embedding
\begin{equation}
\label{compact_emb}
H_0^1(\Omega) \subset\subset L^{p+1}(\Omega),
\end{equation}
for $2\leq p+1<2^*$, being a continuous embedding up to the critical exponent $p=2^*-1$. Moreover, given $u\in H_0^1(\Omega)$, because of the Sobolev inequality, there exist a positive constant $C=C(N,p)$ such that
\begin{equation}\label{sobolev}
\|u\|_{L^{p+1}(\Omega)}\leq C \|u\|_{H_0^1(\Omega)},
\end{equation}
for $2\leq p+1\leq 2^*$. Note that here, for the fourth-order elliptic problem \eqref{eq:CH}, the Sobolev's critical exponent we
are using is $2^* =\frac {2N}{N-2}$, because this operator has the representation,
\begin{equation*}
(-\Delta)^2u-(-\Delta)|u|^{p-1}u=(-\Delta)((-\Delta) u-|u|^{p-1}u),
\end{equation*}
so that, the necessary embedding features are governed by a standard second-order equation,
\begin{equation*}
-\Delta u=|u|^{p-1}u.
\end{equation*}
This is different from the usual critical problems with a bi-Laplacian operator of the form,
\begin{equation*}
(-\Delta)^2 u=\gamma u+ |u|^{p-1}u,
\end{equation*}
analyzed by Gazzola--Grunau--Sweers \cite{GGS}, where the Sobolev's critical exponent is $p_{S}=\frac {2N}{N-4}$. \newline
On the other hand, we also observe that \eqref{eq:CH} is not a variational problem. Nonetheless, applying $(-\Delta)^{-1}$
to the equation of \eqref{eq:CH}, we obtain the following non-local elliptic Dirichlet problem,
\begin{equation} \label{eq:nonlocal}
\left\{\begin{array}{ll}
-\Delta u=\gamma (-\Delta)^{-1} u + |u|^{p-1}u & \mbox{in} \,\,\, \Omega,\\
\quad\ \ u=0                      & \mbox{on} \,\,\, \partial\Omega,
\end{array}\right.
\tag{$P_{\gamma}$}
\end{equation}
which is a variational problem with the following associated Euler-Lagrange functional,
\begin{equation}
\label{fn:nonlocal}
    \mathcal{F}_{\gamma}(u)=\frac{1}{2}\int_{\Omega}|\nabla u|^2dx-\frac{\gamma}{2} \int_{\Omega} u(-\Delta)^{-1}u\, dx
        -\frac{1}{p+1} \int_{\Omega} |u|^{p+1}dx,
\end{equation}
 so that solutions of \eqref{eq:nonlocal} can be obtained as critical points of the Fr\'echet-differentiable
functional $\mathcal{F}_{\gamma}$ defined by \eqref{fn:nonlocal}. Here, as customary $(-\Delta)^{-1}u=v$, if
\begin{equation*}
 -\Delta v=u \,\,\, \mbox{in} \,\,\, \Omega, \quad v=0 \,\,\, \mbox{on}
 \,\,\, \partial \Omega.
\end{equation*}
Note that $(-\Delta)^{-1}$ is a positive linear integral compact operator from $L^2(\Omega)$ into itself, which is
well defined thanks to the Spectral Theorem. Next, we recall the following well-known facts about \textit{polyharmonic} operators of order $2m$ ($m\geq1$ an integer number) in smooth domains $\Omega$. The Navier boundary conditions for the operator $(-\Delta)^m$ are defined as
\begin{equation*}
u=\Delta u=\Delta^2 u=\ldots=\Delta^{k-1} u=0,\quad\mbox{on }\partial\Omega.
\end{equation*}
Clearly, the operator $(-\Delta)^m$ is the $m$-th power of the classical Dirichlet Laplacian in the sense of the spectral theory and it can be defined as the operator whose action on a function $u$ is given by
\begin{equation*}
\langle (-\Delta)^m u, u \rangle=\sum_{j\ge 1} \lambda_j^m|\langle u_1,\varphi_j\rangle|^2,
\end{equation*}
where $(\varphi_i,\lambda_i)$ are the eigenfunctions and eigenvalues of the Laplace operator $(-\Delta)$ with homogeneous
Dirichlet boundary data. Thus,
the operator $(-\Delta)^m$ is well defined in the space of functions that vanish on the boundary,
\begin{equation*}
H_0^m(\Omega)=\left\{u=\sum_{j=1}^{\infty} a_j\varphi_j\in L^2(\Omega):\ ||u||_{H_0^m(\Omega)}=\left(\sum_{j=1}^{\infty} a_j^2\lambda_j^m \right)^{\frac{1}{2}}<\infty\right\}.
\end{equation*}
Since the above definition allows us to integrate by parts, a natural definition of energy solution for problem \eqref{eq:nonlocal}
is given by critical points of the functional $\mathcal{F}_{\gamma}$ defined by \eqref{fn:nonlocal}. Moreover, we can rewrite the functional \eqref{fn:nonlocal} as,
\begin{equation*}
    \mathcal{F}_{\gamma}(u)=\frac{1}{2}\int_{\Omega}|\nabla u|^2dx-\frac{\gamma}{2}\int_{\Omega}|(-\Delta)^{-1/2}u|^2dx-\frac{1}{p+1}\int_{\Omega}|u|^{p+1}dx.
\end{equation*}

Additionally, we have a connection between problem \eqref{eq:CH} and a second order
elliptic system through problem \eqref{eq:nonlocal}. In particular, taking $\textstyle{w:=(-\Delta)^{-1}u}$,
problem \eqref{eq:nonlocal} provides us with the system,
\begin{equation}\label{s1}
    \left\{\begin{array}{l}
    -\Delta u = \gamma w+|u|^{p-1}u,\\
    -\Delta w = u,
    \end{array}\right.\quad \hbox{in}\quad \Omega,\quad (u,w)=(0,0)\quad\hbox{on}\quad \partial\Omega,
\end{equation}
which gives a different perspective to the problem in hand. In fact, we shall obtain the
main results of this paper following both perspectives with respect to the non-local
equation \eqref{eq:nonlocal} and the provided by considering a second order elliptic system. Moreover,
in order to obtain a variational system from problem \eqref{eq:nonlocal}, and since
$\gamma>0$, we take $\textstyle{v:=\sqrt{\gamma}w}$ in \eqref{s1} and we obtain the variational
system
\begin{equation}
\label{system}
\left\{\begin{array}{l}
-\Delta u=\sqrt{\gamma}v+|u|^{p-1}u,\\
-\Delta v=\sqrt{\gamma}u,
    \end{array}\right.\quad \hbox{in}\quad \Omega,\quad (u,v)=(0,0)\quad\hbox{in}\quad \partial\Omega,
\tag{$S_{\gamma}$}
 \end{equation}
whose associated Euler-Lagrange functional is
\begin{equation}
\label{fn:system}
    \mathcal{J}_{\gamma}(u,v)=\frac{1}{2} \int_{\Omega} |\nabla u|^2dx + \frac{1}{2} \int_{\Omega} |\nabla v|^2dx -\sqrt{\gamma}\int_{\Omega} uvdx -\frac{1}{p+1} \int_{\Omega} |u|^{p+1}dx.
\end{equation}
\begin{remark}
Because of the Maximum Principle, given $u$ a positive solution to \eqref{eq:nonlocal},
and setting $v=\sqrt{\gamma}(-\Delta)^{-1}u$, it follows that $v>0$ thus, the pair
$(u,v)=(u,\sqrt{\gamma}(-\Delta)^{-1}u)$ is a positive solution to \eqref{system} and vice versa,
given $(u,v)$ a positive solution to \eqref{system} it is immediate that $u(x)$ is a
positive solution to \eqref{eq:nonlocal}.
\end{remark}
Let us observe that, at the critical exponent $p=2^*-1$, problem \eqref{eq:nonlocal} can be seen as a
linear perturbation of the critical problem,
\begin{equation}\label{po70}
\left\{
\begin{tabular}{lcl}
$-\Delta u=|u|^{2^*-2}u$ & &in $\Omega\subset \mathbb{R}^{N}$, \\
\quad\ \ $u=0$ & &on $\partial\Omega$.
\end{tabular}
\right.
\end{equation}
for which, after applying the well-known result of Pohozaev, \cite{Poh}, one can prove
the non-existence of positive solutions under the star-shapeness assumption on the domain
$\Omega$. Moreover, the classical Brezis--Nirenberg problem,
\begin{equation}\label{BN83}
\left\{
\begin{tabular}{lcl}
$-\Delta u=\gamma u+ |u|^{2^*-2}u$ & &in $\Omega\subset \mathbb{R}^{N}$, \\
\quad\ \ $u=0$                                        & &on $\partial\Omega$.
\end{tabular}
\right.
\end{equation}
can be seen as well as a linear perturbation of problem \eqref{po70}. In his pioneering
paper, \cite{BN}, Brezis and Nirenberg proved that, for $N\geq4$, there exists a positive
solution to \eqref{BN83} if and only if the parameter $\gamma$ belongs to the interval $(0,\lambda_1)$,
being $\lambda_1$ the first eigenvalue for the Laplacian under homogeneous Dirichlet
boundary conditions. Note that, in our situation, the non-local term $\gamma(-\Delta)^{-1}u$ plays actually the role of $\gamma u$ in \eqref{BN83}. This important fact is under analysis in Section\;\ref{concomp}.

\vspace{0.4cm}

\underline{\bf Main results.} We prove the existence of positive solutions
of problem \eqref{eq:nonlocal} depending on the positive parameter $\gamma$. To do so, we will first show the
interval of the parameter $\gamma$ for which there is the possibility of having positive solutions. Next, applying the well-known Mountain Pass Theorem (MPT for short) \cite{AR}, we show that for the range
$2<p+1\leq 2^*$ there actually exists a positive solution to problem \eqref{eq:nonlocal}
provided
\begin{equation*}
0<\gamma<\lambda_1^*,
\end{equation*}
where $\lambda_1^*$ is the first eigenvalue of the operator $(-\Delta)^2$ under homogeneous Navier
boundary conditions, i.e. $\lambda_1^*=\lambda_1^2$ with $\lambda_1$ being the first
eigenvalue for the Laplacian under homogeneous Dirichlet boundary conditions.
If $2<p+1<2^*$ one might apply the MPT directly since, as we will show, our problem possesses
the mountain pass geometry and, thanks to the compact embedding \eqref{compact_emb},
the Palais--Smale condition is satisfied for the functional $\mathcal{F}_{\gamma}$ (see details
below in Section\;\ref{concomp}). On the other hand, at the critical exponent $2^*$, the compactness
of the Sobolev embedding is lost and check whether the Palais--Smale condition is satisfied becomes a delicate
issue to solve. To overcome this lack of compactness we apply a concentration-compactness argument based
on the Concentration-Compactness Principle due to P.-L. Lions, \cite{Lions}, which
allows us to prove the required Palais--Smale condition for $N\geq 7$. We prove the results for problem \eqref{eq:nonlocal} in Section \ref{concomp} and using similar ideas, for system \eqref{system} in Section\;\ref{concomp2}.\newline
Now we state the main results of this paper.
\begin{theorem}\label{Thequation_subcritical}
Assume $1<p<2^*-1$. Then, for every $\gamma\in(0,\lambda_1^*)$ there exists a positive solution
$u$ to problem \eqref{eq:nonlocal}.
\end{theorem}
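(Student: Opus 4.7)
The plan is to obtain $u$ as a critical point via the Mountain Pass Theorem (MPT) applied to the truncated functional
\[
\widetilde{\mathcal{F}}_\gamma(u)=\frac{1}{2}\int_\Omega|\nabla u|^2\,dx-\frac{\gamma}{2}\int_\Omega u(-\Delta)^{-1}u\,dx-\frac{1}{p+1}\int_\Omega (u^+)^{p+1}\,dx,
\]
whose critical points solve $-\Delta u=\gamma(-\Delta)^{-1}u+(u^+)^p$. Expanding $u\in H^1_0(\Omega)$ as $u=\sum_k a_k\varphi_k$ in the $L^2$-orthonormal basis of Dirichlet eigenfunctions of $-\Delta$ yields the key spectral estimate
\[
\int_\Omega u(-\Delta)^{-1}u\,dx=\sum_{k\ge 1}\frac{a_k^2}{\lambda_k}\le\frac{1}{\lambda_1^2}\sum_{k\ge 1}a_k^2\lambda_k=\frac{1}{\lambda_1^*}\|u\|_{H^1_0(\Omega)}^2.
\]
Combined with the Sobolev inequality \eqref{sobolev}, this gives
\[
\widetilde{\mathcal{F}}_\gamma(u)\ge\frac{1}{2}\Big(1-\frac{\gamma}{\lambda_1^*}\Big)\|u\|_{H^1_0(\Omega)}^2-\frac{C}{p+1}\|u\|_{H^1_0(\Omega)}^{p+1},
\]
which, because $\gamma<\lambda_1^*$ and $p+1>2$, is bounded below by some $\alpha>0$ on a small sphere $\|u\|_{H^1_0(\Omega)}=\rho$. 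Fixing any $\varphi\in H^1_0(\Omega)$ with $\varphi^+\not\equiv 0$, the term $-(p+1)^{-1}\int(t\varphi^+)^{p+1}\,dx$ dominates as $t\to+\infty$ and drives $\widetilde{\mathcal{F}}_\gamma(t\varphi)\to-\infty$, so the mountain pass geometry is in place.

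Next I would check the Palais--Smale condition. Boundedness of a PS sequence $\{u_n\}$ in $H^1_0(\Omega)$ follows from the standard linear combination of $\widetilde{\mathcal{F}}_\gamma(u_n)=c+o(1)$ and $\langle\widetilde{\mathcal{F}}_\gamma'(u_n),u_n\rangle=o(\|u_n\|)$, where the positive factor $(1-\gamma/\lambda_1^*)$ again plays the crucial role in absorbing the nonlocal quadratic term. Strong convergence of a subsequence in $H^1_0(\Omega)$ then follows from weak convergence together with the compact embedding \eqref{compact_emb} (valid since $1<p<2^*-1$) for the superlinear term and the $L^2$-compactness of $(-\Delta)^{-1}$ for the nonlocal term. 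The MPT thus delivers a nontrivial critical point $u\not\equiv 0$ of $\widetilde{\mathcal{F}}_\gamma$ at a level $c\ge\alpha>0$.

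The main delicate step is positivity. Testing the equation satisfied by $u$ against $u^-$ and using that $(u^+)^p u^-\equiv 0$ and $\int_\Omega\nabla u\cdot\nabla u^-\,dx=-\|u^-\|_{H^1_0(\Omega)}^2$, one obtains
\[
\|u^-\|_{H^1_0(\Omega)}^2=-\gamma\int_\Omega(-\Delta)^{-1}u^+\cdot u^-\,dx+\gamma\int_\Omega(-\Delta)^{-1}u^-\cdot u^-\,dx.
\]
The first integral on the right is nonnegative by the maximum principle applied to $(-\Delta)^{-1}u^+\ge 0$, so its contribution is nonpositive; the second is controlled by $(\gamma/\lambda_1^*)\|u^-\|_{H^1_0(\Omega)}^2$ via the spectral estimate above applied to $u^-$. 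Therefore $(1-\gamma/\lambda_1^*)\|u^-\|_{H^1_0(\Omega)}^2\le 0$, which forces $u^-\equiv 0$, i.e., $u\ge 0$. Strict positivity $u>0$ in $\Omega$ then follows from the strong maximum principle applied to $-\Delta u=\gamma(-\Delta)^{-1}u+u^p\ge 0$ with $u\not\equiv 0$.
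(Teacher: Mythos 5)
Your proof is correct and follows the same overall strategy as the paper (Mountain Pass Theorem in the subcritical range with compactness from Rellich--Kondrachov), but you treat the positivity step more carefully, and this is a genuine improvement over what the paper writes. The paper defines $\mathcal{F}_\gamma^+(u)=\mathcal{F}_\gamma(u^+)$ and asserts that ``repeating with minor changes'' yields $u\ge 0$. Taken literally, $\mathcal{F}_\gamma(u^+)$ truncates the gradient term and the nonlocal term as well, which destroys coercivity in the direction of $u^-$ (so boundedness of PS sequences fails) and creates smoothness issues for $u\mapsto |\nabla u^+|^2$. Your version truncates only the superlinear term, keeps the quadratic form intact, and therefore preserves the mountain pass geometry and the PS argument verbatim. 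Then the test-against-$u^-$ computation is exactly what is needed to close the argument: since the nonlocal term is \emph{not} truncated, the cross term $\gamma\int(-\Delta)^{-1}u^+\cdot u^-\,dx$ appears with the ``good'' sign (nonpositive after moving it to the right), and the self term $\gamma\int(-\Delta)^{-1}u^-\cdot u^-\,dx$ is absorbed by the spectral bound $\int|(-\Delta)^{-1/2}u^-|^2\,dx\le\lambda_1^{*-1}\|u^-\|_{H_0^1}^2$, which is precisely the same eigenvalue estimate \eqref{bieigen} the paper uses elsewhere. The conclusion $(1-\gamma/\lambda_1^*)\|u^-\|_{H_0^1}^2\le 0$ and the final invocation of the strong maximum principle are correct. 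In short: same route to existence, but your positivity argument supplies details the paper glosses over and fixes a likely imprecision in the paper's choice of truncated functional.
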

\begin{theorem}
\label{Thequation}
Assume $p=2^*-1$. Then, for every $\gamma\in(0,\lambda_1^*)$, there exists a positive solution
$u$ to problem \eqref{eq:nonlocal} provided $N\geq7$.
\end{theorem}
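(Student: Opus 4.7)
Since the compact embedding \eqref{compact_emb} fails at $p=2^*-1$, a direct application of the Mountain Pass Theorem to $\mathcal{F}_\gamma$ will not suffice, and I will need to localise the argument below the critical energy level $\tfrac{1}{N}S^{N/2}$, where $S$ is the best constant in \eqref{sobolev}. The plan is to (i) verify the mountain pass geometry for $\mathcal{F}_\gamma$, (ii) construct a test path whose maximum lies strictly below $\tfrac{1}{N}S^{N/2}$ using Aubin--Talenti extremals, and (iii) invoke the concentration--compactness analysis of Section \ref{concomp} to recover a nontrivial critical point, which will then be shown to be positive.

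For (i), expanding $u=\sum a_j\varphi_j$ in the Dirichlet eigenbasis gives
\[
\int_{\Omega}u(-\Delta)^{-1}u\,dx=\sum_{j\ge 1}\frac{a_j^2}{\lambda_j}\le\frac{1}{\lambda_1^2}\sum_{j\ge 1}\lambda_j a_j^2=\frac{1}{\lambda_1^*}\|\nabla u\|_2^2,
\]
so whenever $\gamma<\lambda_1^*$ one has $\mathcal{F}_\gamma(u)\ge\tfrac12(1-\gamma/\lambda_1^*)\|\nabla u\|_2^2-C\|\nabla u\|_2^{2^*}$, which together with the Sobolev inequality produces a positive barrier on a small sphere in $H^1_0(\Omega)$; large rescalings $t\varphi$ of a fixed nonnegative $\varphi$ drive $\mathcal{F}_\gamma$ to $-\infty$, so the mountain pass level $c_\gamma$ is well defined. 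For (iii), once $c_\gamma<\tfrac{1}{N}S^{N/2}$, a concentration--compactness argument (which I would develop in Section \ref{concomp}) shows that Palais--Smale sequences at level $c_\gamma$ are precompact; positivity is then obtained by replacing $|u|^{p+1}$ with $(u_+)^{p+1}$ in \eqref{fn:nonlocal} and applying the strong maximum principle to the equivalent system \eqref{s1}.

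The heart of the argument is (ii). I would test with cut-offs $u_\varepsilon$ of the Aubin--Talenti instantons
\[
U_\varepsilon(x)=\frac{C_N\,\varepsilon^{(N-2)/2}}{(\varepsilon^2+|x|^2)^{(N-2)/2}}
\]
concentrated at an interior point of $\Omega$. Maximising $\mathcal{F}_\gamma(tu_\varepsilon)$ in $t\ge 0$ reduces the strict inequality $c_\gamma<\tfrac{1}{N}S^{N/2}$ to
\[
\frac{\|\nabla u_\varepsilon\|_2^2-\gamma\int_{\Omega}u_\varepsilon(-\Delta)^{-1}u_\varepsilon\,dx}{\|u_\varepsilon\|_{2^*}^{2}}<S,
\]
and classical Talenti estimates provide $\|\nabla u_\varepsilon\|_2^2=S\|u_\varepsilon\|_{2^*}^{2}+O(\varepsilon^{N-2})$, so everything reduces to a sharp \emph{lower} bound for the nonlocal term. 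Scaling $x=\varepsilon y$ and using that $V:=(-\Delta)^{-1}U_1$ behaves like $|y|^{-(N-4)}$ at infinity while $U_1$ behaves like $|y|^{-(N-2)}$, I expect
\[
\int_{\Omega}u_\varepsilon(-\Delta)^{-1}u_\varepsilon\,dx=\varepsilon^4\int_{\mathbb{R}^N}U_1(y)V(y)\,dy+o(\varepsilon^4),
\]
with integrand $U_1 V\sim|y|^{-(2N-6)}$ integrable at infinity precisely when $N\ge 7$. For $N\ge 7$ the nonlocal contribution is therefore of order $\varepsilon^4$, which strictly dominates the Talenti error $O(\varepsilon^{N-2})$ since $N-2\ge 5>4$, and the required strict inequality follows for $\varepsilon$ small. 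This is the main obstacle and the source of the dimensional restriction: in the classical Brezis--Nirenberg case, $\int u_\varepsilon^2\,dx\sim\varepsilon^2$ beats $\varepsilon^{N-2}$ as soon as $N\ge 4$, but the extra regularity carried by $(-\Delta)^{-1}$ replaces the decay rate $\varepsilon^2$ by $\varepsilon^4$ and pushes the threshold up to $N\ge 7$.
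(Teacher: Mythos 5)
Your proposal follows the same overall architecture as the paper: verify the mountain pass geometry for $\mathcal{F}_\gamma$ using the spectral inequality $\int_\Omega u(-\Delta)^{-1}u\,dx\le\lambda_1^{*-1}\|\nabla u\|_2^2$, establish via P.-L.\ Lions' concentration--compactness that Palais--Smale sequences below $c^*=\tfrac1N S_N^{N/2}$ are compact, and then push the minimax level below $c^*$ by testing along truncated Aubin--Talenti instantons. The genuinely different ingredient is the lower bound on the nonlocal term $F(\varepsilon)=\int_\Omega\phi_\varepsilon(-\Delta)^{-1}\phi_\varepsilon\,dx$ (Lemma~\ref{lem:estimacion_nolocal} in the paper). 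The paper proceeds by a barrier/comparison argument: it builds a quadratic subsolution $\widetilde v$ on a ball $B_{2\rho}$ with $\rho=\varepsilon^\alpha$, shows $\widetilde v\le(-\Delta)^{-1}\phi_\varepsilon$ by choosing $\alpha>\tfrac12+\tfrac1{N-4}$ so that the barrier's source $\rho^{-2}$ is dominated by $\phi_\varepsilon$, and integrates $u_\varepsilon$ against $\widetilde v$ to obtain $F(\varepsilon)\ge C\varepsilon^\mu$ with $\mu<N-2$ whenever $N\ge7$. You instead rescale $x=\varepsilon y$ and identify the sharp leading term $F(\varepsilon)\sim\varepsilon^4\int_{\mathbb{R}^N}U_1 V_1\,dy$ with $V_1=(-\Delta)^{-1}U_1$, observing that $U_1V_1\sim|y|^{-(2N-6)}$ is integrable at infinity precisely when $N\ge7$. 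Your version is cleaner and more illuminating: it gives the sharp order $\varepsilon^4$ (rather than a range of exponents $\mu$) and ties the dimension restriction directly to the convergence of a single explicit integral, which also makes the analogy with the Brezis--Nirenberg $\varepsilon^2$-versus-$\varepsilon^{N-2}$ competition transparent. What it costs you is a verification you currently only assert: that the cut-off $\varphi_{j,R}$ and the difference between the Dirichlet Green's function of $\Omega$ and the whole-space Newton kernel contribute only $o(\varepsilon^4)$. That step is routine (the regular part of the Green's function together with $\int_\Omega u_\varepsilon\sim\varepsilon^{(N-2)/2}$ gives a contribution of order $\varepsilon^{N-2}=o(\varepsilon^4)$ for $N\ge7$, and the truncation errors are of the same order), but it should be made explicit to turn the plan into a proof.
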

Surprisingly, even though our problem \eqref{eq:nonlocal} is a non-local but also linear
perturbation of the problem \eqref{po70}, Theorem \ref{Thequation} addresses dimensions
$N\geq 7$, in contrast to the existence result of Brezis and Nirenberg about the linear
perturbation \eqref{BN83}, that covers the wider range $N\geq4$. In other words, the
non-local term $\gamma(-\Delta)^{-1}u$, despite of being just a linear perturbation, has an
important effect on the dimensions for which the classical Brezis--Nirenberg technique
based on the minimizers of the Sobolev constant still works.\newline
Finally, although the equivalence between the system \eqref{system} and the non-local
problem \eqref{eq:nonlocal} provides us with existence results for the system \eqref{system}
by means of Theorem \ref{Thequation_subcritical} and Theorem \ref{Thequation}, we prove independently the following.
\begin{theorem}\label{Thsystem_subcritical}
Assume $1<p<2^*-1$. Then, for every $\gamma\in(0,\lambda_1^*)$, there exists a positive solution
$(u,v)$ to system \eqref{system}.
\end{theorem}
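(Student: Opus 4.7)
The plan is to apply the Mountain Pass Theorem directly to a truncated version of the functional $\mathcal{J}_\gamma$ in \eqref{fn:system}, posed on the product Hilbert space $\mathbf{H}:=H_0^1(\Omega)\times H_0^1(\Omega)$. To build in the positivity of the first component from the outset, I work with
\[
\widetilde{\mathcal{J}}_\gamma(u,v)=\tfrac{1}{2}\|\nabla u\|_{L^2}^2+\tfrac{1}{2}\|\nabla v\|_{L^2}^2-\sqrt{\gamma}\int_\Omega uv\,dx-\tfrac{1}{p+1}\int_\Omega (u^+)^{p+1}\,dx,
\]
whose critical points satisfy $-\Delta u=\sqrt{\gamma}v+(u^+)^p$ and $-\Delta v=\sqrt{\gamma}u$ in $\Omega$ with homogeneous Dirichlet data.

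\textbf{Quadratic form and MP geometry.} Expanding $(u,v)$ in the Dirichlet eigenbasis $\{\varphi_j\}$ diagonalizes the $2\times 2$ block as $\lambda_j\pm\sqrt{\gamma}$, and a direct Poincar\'e estimate gives
\[
Q(u,v):=\|\nabla u\|_{L^2}^2+\|\nabla v\|_{L^2}^2-2\sqrt{\gamma}\!\int_\Omega uv\,dx\geq\Bigl(1-\tfrac{\sqrt{\gamma}}{\lambda_1}\Bigr)\bigl(\|\nabla u\|_{L^2}^2+\|\nabla v\|_{L^2}^2\bigr),
\]
whose coefficient is strictly positive because $\gamma<\lambda_1^*=\lambda_1^2$. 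Combined with the Sobolev inequality \eqref{sobolev} this yields $\widetilde{\mathcal{J}}_\gamma\geq\alpha>0$ on a small sphere in $\mathbf{H}$. Along the ray $(t\varphi_1,t\varphi_1)$, the term $(u^+)^{p+1}$ drives $\widetilde{\mathcal{J}}_\gamma\to-\infty$ as $t\to\infty$ since $p+1>2$, producing the required point outside the sphere.

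\textbf{Palais--Smale and existence.} For a $(PS)_c$ sequence $(u_n,v_n)$, the standard combination $(p+1)\widetilde{\mathcal{J}}_\gamma(u_n,v_n)-\langle\widetilde{\mathcal{J}}_\gamma'(u_n,v_n),(u_n,v_n)\rangle$ controls $Q(u_n,v_n)$, and the coercivity estimate above bounds $\|(u_n,v_n)\|_{\mathbf{H}}$. Passing to a weakly convergent subsequence and invoking the compact embedding \eqref{compact_emb}---which is where the subcritical hypothesis $p+1<2^*$ is essential---lets one pass to the limit in $\int(u_n^+)^p u_n\,dx$, and standard arguments upgrade the convergence to strong. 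The Mountain Pass Theorem of \cite{AR} then delivers a nontrivial critical point $(u,v)\in\mathbf{H}$ of $\widetilde{\mathcal{J}}_\gamma$.

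\textbf{Positivity: the main obstacle.} This is the delicate step, since neither equation of the coupled system alone fixes the sign of its unknown. Testing $-\Delta u=\sqrt{\gamma}v+(u^+)^p$ against $u^-\in H_0^1(\Omega)$ gives $\|\nabla u^-\|_{L^2}^2=-\sqrt{\gamma}\int vu^-\,dx$, while testing $-\Delta v=\sqrt{\gamma}u$ against $v^-$ yields $\|\nabla v^-\|_{L^2}^2=-\sqrt{\gamma}\int uv^-\,dx$. Using $-vu^-\leq v^-u^-$ and $-uv^-\leq u^-v^-$ (since $v^+u^-,u^+v^-\geq 0$), followed by AM--GM and Poincar\'e, one obtains
\[
\|\nabla u^-\|_{L^2}^2+\|\nabla v^-\|_{L^2}^2\leq\frac{\sqrt{\gamma}}{\lambda_1}\bigl(\|\nabla u^-\|_{L^2}^2+\|\nabla v^-\|_{L^2}^2\bigr),
\]
which forces $u^-\equiv v^-\equiv 0$ because $\sqrt{\gamma}/\lambda_1<1$. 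Hence $u,v\geq 0$, and the strong maximum principle applied to each scalar equation promotes both to strict positivity in $\Omega$. On the positive cone $(u^+)^p=|u|^{p-1}u$, so $(u,v)$ is a positive solution of \eqref{system}. It is worth noting that the very same spectral gap $\sqrt{\gamma}<\lambda_1$ controls both the coercivity of $Q$ and the positivity step, which is why the range $\gamma\in(0,\lambda_1^*)$ is the natural one here.
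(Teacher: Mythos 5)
Your proof is correct, and the positivity step is handled by a genuinely different (and arguably more careful) argument than the one sketched in the paper. The overall skeleton coincides: Mountain Pass geometry from the coercivity of the quadratic form $Q$ with constant $1-\sqrt{\gamma}/\lambda_1$, boundedness of Palais--Smale sequences via the combination $(p+1)\widetilde{\mathcal{J}}_\gamma-\langle\widetilde{\mathcal{J}}_\gamma',\cdot\rangle=\frac{p-1}{2}Q$, and compactness from the Rellich--Kondrachov embedding since $p+1<2^*$. The difference is in how positivity is obtained. The paper introduces the functional $\mathcal{J}_\gamma^+(u,v)=\mathcal{J}_\gamma(u^+,v^+)$, which truncates \emph{both} the gradient terms and the bilinear coupling, and asserts that "repeating the arguments" yields a nonnegative critical point; that assertion is a bit delicate for a coupled system, since a functional insensitive to $(u^-,v^-)$ leaves those components undetermined and the Euler--Lagrange system on the negative set is not the original one. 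You instead truncate only the source nonlinearity, keeping the bilinear coupling $-\sqrt{\gamma}\int uv$ intact so that critical points genuinely solve $-\Delta u=\sqrt{\gamma}v+(u^+)^p$ and $-\Delta v=\sqrt{\gamma}u$, and then prove positivity by testing each equation against the negative part of its own unknown. The resulting inequality $\|\nabla u^-\|^2+\|\nabla v^-\|^2\le\frac{\sqrt{\gamma}}{\lambda_1}(\|\nabla u^-\|^2+\|\nabla v^-\|^2)$ forces $u^-\equiv v^-\equiv 0$ thanks to the same spectral gap $\sqrt{\gamma}<\lambda_1$ that controls $Q$, and the strong maximum principle (applied first to the $v$-equation, since $u\not\equiv 0$ for a nontrivial mountain pass point, then to the $u$-equation using $v>0$) upgrades to strict positivity. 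This is a cleaner and more self-contained route to positivity for the system, and it nicely exhibits the role of $\gamma<\lambda_1^*$ at two distinct points of the argument; the paper's version, if one wants to make it airtight, would essentially have to carry out a computation of this type anyway.
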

\begin{theorem}
\label{Thsystem}
Assume $p=2^*-1$. Then, for every $\gamma\in(0,\lambda_1^*)$, there exists a positive solution
$(u,v)$ to system \eqref{system} provided $N\geq7$.
\end{theorem}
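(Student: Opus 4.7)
The plan is to apply the Mountain Pass Theorem to the functional $\mathcal{J}_\gamma$ defined by \eqref{fn:system} on the product space $H_0^1(\Omega)\times H_0^1(\Omega)$, in parallel with the strategy for \eqref{eq:nonlocal} but carried out in the product setting. The first step is to rewrite the quadratic part of $\mathcal{J}_\gamma$ as a bilinear form and check that it is positive definite for $\gamma<\lambda_1^*=\lambda_1^2$. By Cauchy--Schwarz and Poincaré one has $2\sqrt{\gamma}\int_\Omega uv\,dx\le \frac{\sqrt{\gamma}}{\lambda_1}(\|\nabla u\|_2^2+\|\nabla v\|_2^2)$, and the condition $\gamma<\lambda_1^*$ is exactly what makes this strictly smaller than $\|\nabla u\|_2^2+\|\nabla v\|_2^2$, giving an equivalent norm on $H_0^1(\Omega)\times H_0^1(\Omega)$. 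This, together with the Sobolev inequality \eqref{sobolev} applied to the $|u|^{p+1}$-term, yields the first ingredient of the mountain pass geometry: there exist $\rho,\alpha>0$ with $\mathcal{J}_\gamma(u,v)\ge\alpha$ on the sphere of radius $\rho$. The second ingredient, the existence of a pair $(u_0,v_0)$ far from the origin with $\mathcal{J}_\gamma(u_0,v_0)<0$, is immediate by choosing $(u_0,v_0)=(tu_*,tv_*)$ with $u_*\ge0$, $u_*\not\equiv 0$, and $v_*=\sqrt{\gamma}(-\Delta)^{-1}u_*$, since the $|u|^{p+1}$-term dominates as $t\to\infty$.

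Once the geometry is established, I would run the standard Mountain Pass argument to obtain a Palais--Smale sequence $\{(u_n,v_n)\}$ at the minimax level
\begin{equation*}
c_\gamma=\inf_{\eta\in\Gamma}\max_{t\in[0,1]}\mathcal{J}_\gamma(\eta(t)),
\end{equation*}
where $\Gamma$ is the usual class of paths joining $(0,0)$ to $(u_0,v_0)$. The standard coercivity-plus-Sobolev argument shows this sequence is bounded in $H_0^1(\Omega)\times H_0^1(\Omega)$, and the second equation of \eqref{system} moreover shows $v_n=\sqrt{\gamma}(-\Delta)^{-1}u_n+o(1)$ in $H_0^1$, so concentration can only occur in the $u$-component. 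The real work is then to prove a localized Palais--Smale condition for $\mathcal{J}_\gamma$ at all levels strictly below the threshold $\tfrac{1}{N}S^{N/2}$, where $S$ is the best Sobolev constant for $H_0^1\hookrightarrow L^{2^*}$. Using the Concentration-Compactness Principle of P.-L. Lions applied to the (scalar) sequence $\{u_n\}$, any defect of compactness appears as a sum of bubbles carrying at least $\tfrac{1}{N}S^{N/2}$ of energy; on the $v$-component the regularizing effect of $(-\Delta)^{-1}$ sends any such bubble to zero strongly, so no additional compactness issue is introduced by the coupling. This is the same analysis performed for \eqref{eq:nonlocal} in Section \ref{concomp}, merely transposed to the product setting.

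The main obstacle, and where the dimensional restriction $N\ge 7$ enters, is to show $c_\gamma<\tfrac{1}{N}S^{N/2}$. For this I would use as test path the curve $t\mapsto(tU_\varepsilon,t\sqrt{\gamma}(-\Delta)^{-1}U_\varepsilon)$, where $U_\varepsilon$ is the standard Aubin--Talenti instanton truncated in $\Omega$ and concentrated at an interior point. With this choice the coupling $\sqrt{\gamma}\int uv\,dx$ becomes $\gamma\int U_\varepsilon(-\Delta)^{-1}U_\varepsilon\,dx=\gamma\|(-\Delta)^{-1/2}U_\varepsilon\|_2^2$, and elementary elliptic regularity estimates give that this quantity is of order $\varepsilon^4$ as $\varepsilon\to 0$, because the inverse Laplacian brings two extra orders of smoothing compared with the plain $L^2$-term in the Brezis--Nirenberg problem. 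Optimizing $t$ along the path one finds that the worst remainder against the Sobolev threshold is of size $O(\varepsilon^{N-2})$, so that the nonlocal gain dominates precisely when $N-2>4$, i.e.\ $N\ge 7$. This is exactly the same dichotomy already encountered in Section \ref{concomp} for \eqref{eq:nonlocal} and is the reason the dimensional hypothesis cannot be relaxed by this method.

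Finally, once a critical point $(u,v)$ of $\mathcal{J}_\gamma$ is produced at level $c_\gamma\in(0,\tfrac{1}{N}S^{N/2})$, positivity is obtained in the standard way: replace $|u|^{p-1}u$ by $(u^+)^p$ in the functional, verify that the same argument yields a nontrivial critical point with $u\ge 0$, apply the strong maximum principle to the first equation to deduce $u>0$, and then $v=\sqrt{\gamma}(-\Delta)^{-1}u>0$ from the second equation, via the maximum principle as already noted in the Remark following \eqref{system}.
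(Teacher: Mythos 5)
Your overall roadmap matches the paper's: Mountain Pass geometry for $\mathcal{J}_\gamma$, boundedness of Palais--Smale sequences, a localized compactness statement via the Lions concentration-compactness principle, and a test path driving the minimax level below $\tfrac{1}{N}S_N^{N/2}$. Where you genuinely diverge is in the choice of test path for that last step. With your pair $(tU_\varepsilon, t\sqrt{\gamma}(-\Delta)^{-1}U_\varepsilon)$, the identity $\|\nabla(-\Delta)^{-1}U_\varepsilon\|_{L^2}^2 = \int_\Omega U_\varepsilon(-\Delta)^{-1}U_\varepsilon\,dx$ makes the quadratic part of $\mathcal{J}_\gamma$ collapse exactly to $\mathcal{F}_\gamma(tU_\varepsilon)$, so you are re-running the scalar argument of Section~\ref{concomp} and therefore need the nonlocal lower bound $\int_\Omega\phi_\varepsilon(-\Delta)^{-1}\phi_\varepsilon\,dx \gtrsim \varepsilon^\mu$ of Lemma~\ref{lem:estimacion_nolocal}. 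That estimate is proved by a barrier/comparison-function argument, not by ``elementary elliptic regularity''; the Maximum Principle for $(-\Delta)$ is the crux, and one must also verify that the admissible exponent $\mu$ can be taken below $N-2$, which is what yields $N\ge 7$. The paper instead takes the path $(tM\phi_\varepsilon, tM\rho\phi_\varepsilon)$ with $\rho=\varepsilon^\alpha$, so the coupling becomes $\rho\sqrt{\gamma}\int\phi_\varepsilon^2\,dx$, controlled solely by the classical Brezis--Nirenberg estimate of Lemma~\ref{lees}; the requirement $\min\{2\alpha,N-2\}>2+\alpha$ then forces $N\ge 7$. Both routes give the same dimensional threshold, but the paper's choice is deliberate: it keeps Section~\ref{concomp2} independent of the comparison argument, and --- crucially --- it is the version that generalizes to Section~\ref{furext}, where the higher-order operator $(-\Delta)^{-m}$ has no comparison principle and your reduction via Lemma~\ref{lem:estimacion_nolocal} would not be available. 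Your additional observation that the $v$-component of a PS sequence is precompact (so any defect of compactness lives only in the $u$-component) is a legitimate simplification of the concentration-compactness step; the paper instead runs Lions' lemma on all three measures and derives $\nu_j=\mu_j+\tilde\mu_j$ directly.
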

In the last section of the paper we extend our study to a high-order problem and we prove, under analogous hypotheses, that there exists a positive solution to the problem
\begin{equation}\label{general}
\left\{
\begin{tabular}{lcl}
$-\Delta u=\gamma (-\Delta)^{-m}u+ |u|^{p-1}u$ & &in $\Omega\subset \mathbb{R}^{N}$, \\
\quad\ \ $u=0$                                        & &on $\partial\Omega$.
\end{tabular}
\right.
\tag{$E_{\gamma,m}$}
\end{equation}
Due to the lack of a comparison principle for a higher order equations,
to obtain the existence results dealing with \eqref{general} we can not
tackle this problem directly, and we need to use a similar correspondence
to the one performed above for the problem \eqref{eq:CH}, now with an
elliptic system of $m+1$ equations.

%%%%%%%%%%%%%%%%%%%%%%%%%%%%%%%%%%%%%%%%%%%%%%%%%%%%%%%%%%%%%%%%%%%%
%%%%%%%%%%%%%%%%%%%%%%%%%%%%%%%%%%%%%%%%%%%%%%%%%%%%%%%%%%%%%%%%%%%%
%%%%%%%%%%%%%%%%%%%%%%%%%%%%%%%%%%%%%%%%%%%%%%%%%%%%%%%%%%%%%%%%%%%%
\section{Existence of positive solutions for problem \eqref{eq:CH} via problem \eqref{eq:nonlocal}}\label{concomp}
%%%%%%%%%%%%%%%%%%%%%%%%%%%%%%%%%%%%%%%%%%%%%%%%%%%%%%%%%%%%%%%%%%%%
%%%%%%%%%%%%%%%%%%%%%%%%%%%%%%%%%%%%%%%%%%%%%%%%%%%%%%%%%%%%%%%%%%%%
%%%%%%%%%%%%%%%%%%%%%%%%%%%%%%%%%%%%%%%%%%%%%%%%%%%%%%%%%%%%%%%%%%%%

\noindent In this section we carry out the proof of Theorem \ref{Thequation_subcritical}
and Theorem \ref{Thequation}. First, we establish a condition on the range of values of the
parameter $\gamma$ necessary for the existence of positive solutions to equation \eqref{eq:nonlocal}.
Let us consider the following generalized eigenvalue problem associated to \eqref{eq:nonlocal},
\begin{equation}\label{eiglin1}
\left\{
\begin{tabular}{lcl}
$-\Delta u=\lambda(-\Delta)^{-1}u$ & &in $\Omega\subset \mathbb{R}^{N}$, \\
\quad\ \ $u=0$&&on $\partial\Omega$.\\
\end{tabular}
\right.
\end{equation}
Then, we find that for the first eigenfunction $\varphi_1$ associated with the first
eigenvalue $\lambda_1^*$ in \eqref{eiglin1},
\begin{equation*}
\int_{\Omega} |\nabla \varphi_1|^2dx =\lambda_1^* \int_{\Omega} |(-\Delta)^{-1/2} \varphi_1|^2dx, \quad \hbox{with}\quad \varphi_1\in H^1_0(\Omega),
\end{equation*}
and, hence,
\begin{equation}\label{bieigen}
\lambda_1^*=\inf_{u\in H_0^1(\Omega)} \frac{\int_{\Omega} |\nabla u|^2dx}{ \int_{\Omega} |(-\Delta)^{-1/2} u|^2dx}.
\end{equation}
On the other hand, it is clear that substituting the first eigenfunction of the Laplace
operator under homogeneous Dirichlet boundary conditions, $\varphi_1$, into \eqref{eiglin1},
 it follows that $\lambda_1^*=\lambda_1^2$. Thus, by the very definition of the powers of the
Laplace operator, $\lambda_1^*$ coincides with the first eigenvalue of the operator $(-\Delta)^2$
under homogeneous Navier boundary conditions as well as the first eigenfunction of
\eqref{eiglin1} coincides with the first eigenfunction of the Laplace operator under
homogeneous Dirichlet boundary conditions. Now, we prove the following.
\begin{lemma}\label{cota}
Problem \eqref{eq:nonlocal} does not possess a positive solution when
$$\gamma \geq \lambda_1^*.$$
\end{lemma}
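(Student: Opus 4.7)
The plan is to test the equation against the first Dirichlet eigenfunction $\varphi_1$ of $-\Delta$ and exploit the self-adjointness of $(-\Delta)^{-1}$ on $L^2(\Omega)$, converting the inverse-operator term into an explicit multiple of $\int_\Omega u\varphi_1\,dx$. Assuming by contradiction that $u>0$ solves \eqref{eq:nonlocal} for some $\gamma\ge\lambda_1^\ast$, I multiply the equation by $\varphi_1>0$ and integrate over $\Omega$.

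For the left-hand side, integration by parts (using $\varphi_1=u=0$ on $\partial\Omega$) together with $-\Delta\varphi_1=\lambda_1\varphi_1$ yields
\begin{equation*}
\int_{\Omega}(-\Delta u)\varphi_1\,dx=\int_{\Omega}u(-\Delta\varphi_1)\,dx=\lambda_1\int_{\Omega}u\varphi_1\,dx.
\end{equation*}
For the nonlocal term, I use that $(-\Delta)^{-1}$ is self-adjoint on $L^2(\Omega)$ and that $(-\Delta)^{-1}\varphi_1=\lambda_1^{-1}\varphi_1$, so
\begin{equation*}
\int_{\Omega}\bigl((-\Delta)^{-1}u\bigr)\varphi_1\,dx=\int_{\Omega}u\bigl((-\Delta)^{-1}\varphi_1\bigr)\,dx=\frac{1}{\lambda_1}\int_{\Omega}u\varphi_1\,dx.
\end{equation*}
Combining these identities with the equation for $u$ gives
\begin{equation*}
\Bigl(\lambda_1-\tfrac{\gamma}{\lambda_1}\Bigr)\int_{\Omega}u\varphi_1\,dx=\int_{\Omega}u^{p}\varphi_1\,dx,
\end{equation*}
that is, $(\lambda_1^\ast-\gamma)\int_{\Omega}u\varphi_1\,dx=\lambda_1\int_{\Omega}u^{p}\varphi_1\,dx$, recalling $\lambda_1^\ast=\lambda_1^2$.

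Since $u>0$ and $\varphi_1>0$ in $\Omega$, the right-hand side is strictly positive. If $\gamma\ge\lambda_1^\ast$, the left-hand side is nonpositive, which is a contradiction. Hence no positive solution exists in this regime, which is the claim of the lemma. No delicate step is involved here; the only point requiring mild care is the rigorous justification of the integration by parts and of the identity $\int_\Omega((-\Delta)^{-1}u)\varphi_1\,dx=\int_\Omega u((-\Delta)^{-1}\varphi_1)\,dx$, which follows from the standard $H^1_0$ variational formulation of $(-\Delta)^{-1}$ together with $u,\varphi_1\in H^1_0(\Omega)$ and the compactness/self-adjointness of $(-\Delta)^{-1}$ on $L^2(\Omega)$ already recalled in the introduction.
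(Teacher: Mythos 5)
Your proposal is correct and follows essentially the same argument as the paper: testing the equation against the first Dirichlet eigenfunction $\varphi_1$, using that $\varphi_1$ is also an eigenfunction of $(-\Delta)^{-1}$ with eigenvalue $\lambda_1^{-1}$, and exploiting positivity of the nonlinear term to force $\gamma<\lambda_1^2=\lambda_1^*$. The only cosmetic difference is that you phrase it as a contradiction while the paper derives the bound $\gamma<\lambda_1^*$ directly.
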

\begin{proof}
Assume that $u$ is a positive solution to \eqref{eq:nonlocal} and let $\varphi_1$ be a
positive first eigenfunction of the Laplacian operator in $\Omega$ under homogeneous Dirichlet
boundary conditions. Taking $\varphi_1$ as a test function for the equation of \eqref{eq:nonlocal}
we obtain,
\begin{align}\label{eq_gamma}
\int_{\Omega}\varphi_1(-\Delta)udx &=\gamma\int_{\Omega}\varphi_1(-\Delta)^{-1}udx+\int_{\Omega}|u|^{p-1}u\varphi_1dx\\
&>\gamma\int_{\Omega}\varphi_1(-\Delta)^{-1}udx\nonumber.
\end{align}
Thus, integrating by parts both sides of \eqref{eq_gamma},
\begin{equation*}
\lambda_1\int_{\Omega}u\varphi_1dx>\gamma\int_{\Omega}u(-\Delta)^{-1}\varphi_1dx=\frac{\gamma}{\lambda_1}\int_{\Omega}u\varphi_1dx.
\end{equation*}
Hence, $\gamma<\lambda_1^2=\lambda_1^*$.
\end{proof}
%%%%%%%%%%%%%%%%%%%%%%%%%%%%%%%%%%%%%%%%%%%%%%%%%%%%%%%%%%%%%%%%%%%%
\begin{lemma}
\label{lezero}
The functional $\mathcal{F}_{\gamma}$ denoted by \eqref{fn:nonlocal} has the Mountain Pass geometry.
\end{lemma}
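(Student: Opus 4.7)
The plan is to verify the two standard geometric conditions for the Mountain Pass Theorem: (i) there exist $\rho,\alpha>0$ such that $\mathcal{F}_\gamma(u)\geq\alpha$ whenever $\|u\|_{H_0^1(\Omega)}=\rho$, and (ii) there exists $e\in H_0^1(\Omega)$ with $\|e\|_{H_0^1(\Omega)}>\rho$ and $\mathcal{F}_\gamma(e)<0$. Both steps rely on the assumption $0<\gamma<\lambda_1^*$, the Sobolev inequality \eqref{sobolev}, and the variational characterization \eqref{bieigen} of $\lambda_1^*$.

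For (i), I would first rewrite the quadratic part of $\mathcal{F}_\gamma$ as
\begin{equation*}
\frac{1}{2}\int_\Omega|\nabla u|^2\,dx-\frac{\gamma}{2}\int_\Omega|(-\Delta)^{-1/2}u|^2\,dx.
\end{equation*}
By \eqref{bieigen}, $\int_\Omega|(-\Delta)^{-1/2}u|^2\,dx\leq\frac{1}{\lambda_1^*}\|u\|_{H_0^1(\Omega)}^2$, so the quadratic part is bounded below by $\frac{1}{2}\bigl(1-\gamma/\lambda_1^*\bigr)\|u\|_{H_0^1(\Omega)}^2$, and the coefficient is strictly positive because $\gamma<\lambda_1^*$. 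Applying \eqref{sobolev} to the nonlinear term yields
\begin{equation*}
\mathcal{F}_\gamma(u)\geq \frac{1}{2}\Bigl(1-\frac{\gamma}{\lambda_1^*}\Bigr)\|u\|_{H_0^1(\Omega)}^2-\frac{C^{p+1}}{p+1}\|u\|_{H_0^1(\Omega)}^{p+1}.
\end{equation*}
Since $p+1>2$, the super-quadratic term is negligible for small $\|u\|_{H_0^1(\Omega)}$, so choosing $\rho>0$ sufficiently small we obtain $\mathcal{F}_\gamma(u)\geq\alpha>0$ on the sphere $\|u\|_{H_0^1(\Omega)}=\rho$.

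For (ii), fix any nontrivial $u_0\in H_0^1(\Omega)$, say $u_0\geq 0$, and study $t\mapsto\mathcal{F}_\gamma(tu_0)$ for $t>0$. Because
\begin{equation*}
\mathcal{F}_\gamma(tu_0)=\frac{t^2}{2}\Bigl(\|u_0\|_{H_0^1(\Omega)}^2-\gamma\int_\Omega|(-\Delta)^{-1/2}u_0|^2\,dx\Bigr)-\frac{t^{p+1}}{p+1}\int_\Omega|u_0|^{p+1}\,dx,
\end{equation*}
the leading term as $t\to\infty$ is $-t^{p+1}$ with a strictly negative coefficient (recall $p+1>2$), so $\mathcal{F}_\gamma(tu_0)\to-\infty$. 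In particular, for $t=T$ sufficiently large, $e:=Tu_0$ satisfies $\|e\|_{H_0^1(\Omega)}>\rho$ and $\mathcal{F}_\gamma(e)<0$. Together with the obvious identity $\mathcal{F}_\gamma(0)=0$, this completes the verification.

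The argument is essentially routine once the correct functional framework is in place; the only subtle point is controlling the nonlocal quadratic term $\gamma\int_\Omega u(-\Delta)^{-1}u\,dx$ by $\|u\|_{H_0^1(\Omega)}^2$ with the sharp constant $1/\lambda_1^*$, which is precisely what the generalized eigenvalue characterization \eqref{bieigen} provides. This is the step that forces the restriction $\gamma<\lambda_1^*$ and therefore aligns the mountain-pass threshold with the non-existence bound of Lemma \ref{cota}.
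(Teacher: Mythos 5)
Your argument is correct and follows essentially the same route as the paper: both proofs use the eigenvalue characterization \eqref{bieigen} to bound the nonlocal quadratic term by $\frac{\gamma}{\lambda_1^*}\|u\|_{H_0^1(\Omega)}^2$, the Sobolev inequality \eqref{sobolev} to control the super-quadratic term, and the fact that $p+1>2$ to get decay to $-\infty$ along rays. The only cosmetic difference is that the paper normalizes $\|g\|_{L^{p+1}(\Omega)}=1$ and studies $t\mapsto\mathcal{F}_\gamma(tg)$ for small $t$, whereas you work directly on the sphere $\|u\|_{H_0^1(\Omega)}=\rho$, which more transparently delivers the uniform lower bound required by the Mountain Pass Theorem.
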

\begin{proof}
Without loss of generality we can take a function $g\in H_0^1(\Omega)$ such that
$\displaystyle\|g\|_{L^{p+1}(\Omega)}=1$. Then, taking a real number $t>0$ and applying
the Sobolev inequality \eqref{sobolev} together with \eqref{bieigen}, we find that,
\begin{align*}
\mathcal{F}_{\gamma}(tg)&= \frac{t^2}{2}\int_{\Omega}|\nabla g|^2dx-\frac{t^2\gamma}{2}\int_{\Omega}|(-\Delta)^{-\frac 1 2} g|^2dx-\frac{t^{p+1}}{p+1}\\
&\geq \frac{t^2}{2}\left(1-\frac{\gamma}{\lambda_1^*}\right)\int_{\Omega}|\nabla g|^2dx-\frac{t^{p+1}}{p+1}\\
&\geq \left(\frac{1}{2}\left(1-\frac{\gamma}{\lambda_1^*}\right)t^2-\frac{C}{(p+1)}t^{p+1}\right)\int_{\Omega}|\nabla g|^2dx\\
&>0
\end{align*}
for $t$ small enough, i.e.
\begin{equation*}
0<t^{p-1}<\frac{p+1}{2C}\left(1-\frac{\gamma}{\lambda_1^*}\right).
\end{equation*}
Thus, the functional $\mathcal{F}_{\gamma}$ has a local minimum at $u=0$, i.e.
\begin{equation*}
\mathcal{F}_{\gamma}(tg)>\mathcal{F}_{\gamma}(0)=0,
\end{equation*}
for any $g\in H_0^1(\Omega)$ provided $t>0$ is small enough. Also, it is clear that,
\begin{align*}
    \mathcal{F}_{\gamma}(tg)&=\frac{t^2}{2} \int_{\Omega} |\nabla g|^2dx - \frac{\gamma t^2}{2} \int_{\Omega} |(-\Delta)^{-1/2} g|^2dx -\frac{t^{p+1}}{p+1} \\
        &\leq \frac{t^2 }{2}\|g\|_{H_0^1(\Omega)}^2-\frac{t^{p+1}}{p+1}.
\end{align*}
Then,
\begin{equation*}
\mathcal{F}_{\gamma}(tg) \rightarrow -\infty,\quad \hbox{as}\quad t\to \infty,
\end{equation*}
and thus, there exists $\hat u \in H_0^1(\Omega)$ such that $\mathcal{F}_{\gamma}(\hat u)<0$.

\end{proof}
Now we turn our attention to the so-called Palais--Smale condition.
\begin{definition}
Let $V$ be a Banach space. We say that a sequence $\{u_n\}\subset V$ is a PS sequence
for a functional $\mathfrak{F}$ iff
\begin{equation}\label{convergencia}
\mathfrak{F}(u_n)\quad\hbox{is bounded and}\quad  \mathfrak{F}'(u_n) \to 0\quad\mbox{in}\ V'\quad \hbox{as}\quad n\to \infty,
\end{equation}
where $V'$ is the dual space of $V$. Moreover, we say that a PS sequence
$\{u_n\}\subset V$ satisfies a PS condition iff
\begin{equation}\label{conPS}
\{u_n\}\quad \mbox{has a convergent subsequence.}
\end{equation}
\end{definition}
In particular, given a PS sequence $\{u_n\}\subset V$ such that $\mathfrak{F}(u_n) \to c$,
if \eqref{conPS} is satisfied, we will say that the PS sequence satisfies a
PS condition at level $c$ for the functional $\mathfrak{F}$. Moreover, we say that the
functional $\mathfrak{F}$ satisfies the PS condition at level $c$ if every PS sequence
at level $c$ for $\mathfrak{F}$ possesses a convergent subsequence in $V$.\newline
For our problem, in the subcritical range the PS condition is always satisfied
at any level $c$ because of the compact Sobolev embedding. However, at the
critical exponent $2^*$ the problem is further complicated because of the lack
of compactness in the Sobolev embedding. We will overcome this issue applying
a concentration-compactness argument based on the Concentration-Compactness
Principle developed by P.-L. Lions, \cite{Lions}, proving that the functional
$\mathcal{F}_{\gamma}$ satisfies the PS condition for levels $c$ below a certain critical
value $c^*$ (to be determined).
\begin{lemma}\label{acotacion_eq}
Let $\{u_n\}$ be a PS sequence at level $c$ for the functional $\mathcal{F}_{\gamma}$, i.e.
\begin{equation*}
\mathcal{F}_{\gamma}(u_n) \rightarrow c,\quad \mathcal{F}_{\gamma}'(u_n) \rightarrow 0,\quad \hbox{as}\quad n\to \infty.
\end{equation*}
Then,
\begin{equation*}
\{u_n\}\quad \hbox{is bounded in}\quad  H_0^1(\Omega).
\end{equation*}
\end{lemma}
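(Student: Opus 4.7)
The plan is the standard combination trick: estimate $\mathcal{F}_\gamma(u_n)-\theta\,\langle\mathcal{F}_\gamma'(u_n),u_n\rangle$ for a well-chosen $\theta$ so that the nonlinear $L^{p+1}$ term is killed, and then control the remaining quadratic form from below using the variational characterization \eqref{bieigen} of $\lambda_1^*$ together with the assumption $\gamma<\lambda_1^*$.

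First, I would compute
\begin{equation*}
\mathcal{F}_\gamma(u_n)-\frac{1}{p+1}\langle\mathcal{F}_\gamma'(u_n),u_n\rangle
=\left(\frac{1}{2}-\frac{1}{p+1}\right)\left[\int_\Omega|\nabla u_n|^2\,dx-\gamma\int_\Omega|(-\Delta)^{-1/2}u_n|^2\,dx\right],
\end{equation*}
using the rewritten form of $\mathcal{F}_\gamma$ in terms of $(-\Delta)^{-1/2}$. The Poincar\'e-type inequality \eqref{bieigen} gives
\begin{equation*}
\int_\Omega|(-\Delta)^{-1/2}u_n|^2\,dx\le\frac{1}{\lambda_1^*}\int_\Omega|\nabla u_n|^2\,dx,
\end{equation*}
so the bracket is bounded below by $\bigl(1-\gamma/\lambda_1^*\bigr)\|u_n\|_{H_0^1(\Omega)}^2$. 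Since $p>1$ the constant $\tfrac12-\tfrac{1}{p+1}$ is strictly positive, and by hypothesis $\gamma<\lambda_1^*$, so the right-hand side controls $\|u_n\|_{H_0^1(\Omega)}^2$ up to a strictly positive multiplicative constant.

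On the other hand, from the Palais--Smale hypothesis, $\mathcal{F}_\gamma(u_n)\le c+1$ for $n$ large, and $\|\mathcal{F}_\gamma'(u_n)\|_{H^{-1}}\to 0$, so
\begin{equation*}
\left|\mathcal{F}_\gamma(u_n)-\frac{1}{p+1}\langle\mathcal{F}_\gamma'(u_n),u_n\rangle\right|
\le c+1+\frac{\varepsilon_n}{p+1}\|u_n\|_{H_0^1(\Omega)},
\end{equation*}
with $\varepsilon_n\to 0$. Combining the two estimates produces an inequality of the form $A\|u_n\|_{H_0^1(\Omega)}^2\le B+\varepsilon_n\|u_n\|_{H_0^1(\Omega)}$ with $A>0$, from which the desired boundedness of $\{u_n\}$ in $H_0^1(\Omega)$ follows immediately by a standard absorption argument.

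There is really no subtle obstacle here: the only things to watch are that $\gamma<\lambda_1^*$ is used in a sharp way (otherwise the quadratic form is not coercive), and that the inverse-operator term is correctly handled through \eqref{bieigen} rather than through the full Sobolev chain. The $L^{p+1}$ term, which is the dangerous one at the critical exponent, is deliberately eliminated by the choice of multiplier $\theta=1/(p+1)$, so the lack of compactness at $p=2^*-1$ plays no role at this stage; that difficulty will only surface later when upgrading boundedness to the actual PS condition below the critical level $c^*$.
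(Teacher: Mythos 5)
Your proof is correct and follows essentially the same approach as the paper: form the combination $\mathcal{F}_\gamma(u_n)-\mu\langle\mathcal{F}_\gamma'(u_n),u_n\rangle$, use the variational characterization \eqref{bieigen} of $\lambda_1^*$ together with $\gamma<\lambda_1^*$ to get coercivity of the quadratic part, and absorb the $o(1)\|u_n\|_{H_0^1(\Omega)}$ term. The only difference is cosmetic: you take the multiplier exactly $\mu=\tfrac{1}{p+1}$ so the $L^{p+1}$ term vanishes identically, whereas the paper takes any $\mu\in(\tfrac{1}{p+1},\tfrac12)$ and then discards the resulting nonnegative $L^{p+1}$ term; both choices work and require only $p>1$ for the coefficient $\tfrac12-\mu$ to be positive.
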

\begin{proof}
Since $\mathcal{F}_{\gamma}'(u_n) \rightarrow 0$ in $\left(H_0^1(\Omega)\right)'$, in particular we have
$\displaystyle\left\langle\mathcal{F}_{\gamma}'(u_n)|\frac{u_n}{\|u_n\|_{H_0^1(\Omega)}}\right\rangle\to 0$.
Thus, for any $\varepsilon>0$ there exists a subsequence, denoted again by $\{u_n\}$,
such that,
\begin{equation*}
\int_{\Omega} |\nabla u_n|^2dx-\gamma\int_{\Omega} |(-\Delta)^{-\frac{1}{2}}u_n|^2dx-\int_{\Omega} |u_n|^{p+1}dx=\|u_n\|_{H_0^1(\Omega)}\cdot o(1).
\end{equation*}
Moreover, since $\mathcal{F}_{\gamma}(u_n) \to c$,
\begin{equation*}
\frac{1}{2} \int_{\Omega} |\nabla u_n|^2dx-\frac{\gamma}{2}\int_{\Omega} |(-\Delta)^{-\frac{1}{2}}u_n|^2dx-\frac{1}{p+1} \int_{\Omega} |u_n|^{p+1}dx=c+o(1),
\end{equation*}
for $n$ big enough. Therefore, for a positive constant $\mu$ (to be determined below) we find that
\begin{equation*}
\mathcal{F}_{\gamma}(u_n)-\mu \left\langle \mathcal{F}_{\gamma}'(u_n)|\frac{u_n}{\|u_n\|_{H_0^1(\Omega)}}\right\rangle=c+\|u_n\|_{H_0^1(\Omega)}\cdot o(1).
\end{equation*}
That is,
\begin{align*}
&\left(\frac{1}{2}-\mu\right) \int_{\Omega} |\nabla u_n|^2dx-\left(\frac{1}{2}-\mu\right)\gamma\int_{\Omega} |(-\Delta)^{-\frac{1}{2}}u_n|^2dx
-\left(\frac{1}{p+1}-\mu\right) \int_{\Omega}|u_n|^{p+1}dx\\
&=c+\|u_n\|_{H_0^1(\Omega)}\cdot o(1).
\end{align*}
Hence, taking $\mu$ such that $\frac{1}{p+1}<\mu<\frac{1}{2}$,
\begin{equation*}
\left(\frac{1}{2}-\mu\right) \int_{\Omega} |\nabla u_n|^2dx-\left(\frac{1}{2}-\mu\right)\gamma\int_{\Omega} |(-\Delta)^{-\frac{1}{2}}u_n|^2dx
\leq c+\|u_n\|_{H_0^1(\Omega)}\cdot o(1),
\end{equation*}
and using \eqref{bieigen},
\begin{align*}
\left(\frac{1}{2}-\mu\right)\left(1-\frac{\gamma}{\lambda_1^*}\right)\int_{\Omega} |\nabla u_n|^2dx
&\leq \left(\frac{1}{2}-\mu\right) \int_{\Omega} |\nabla u_n|^2dx-\left(\frac{1}{2}-\mu\right)\gamma\int_{\Omega} |(-\Delta)^{-\frac{1}{2}}u_n|^2dx\\
&\leq c+\|u_n\|_{H_0^1(\Omega)}\cdot o(1).
\end{align*}
From here, we conclude
\begin{equation*}
\left(\frac{1}{2}-\mu\right)\left(1-\frac{\gamma}{\lambda_1^*}\right)\|u_n\|_{H_0^1(\Omega)}^2\leq c+\|u_n\|_{H_0^1(\Omega)}\cdot o(1).
\end{equation*}
Since $0<\gamma<\lambda_1^*$, it follows that
$\left(\frac{1}{2}-\mu\right)\left(1-\frac{\gamma}{\lambda_1^*}\right)>0$ and, thus, because
of the former inequality we conclude that the sequence $\{u_n\}$ is bounded in $H_0^1(\Omega)$.
\end{proof}

\begin{proof}[Proof of Theorem \ref{Thequation_subcritical}.]\hfill\break
Let us consider the subcritical case $1<p<2^*-1$. Given a PS sequence $\{u_n\}\subset H_0^1(\Omega)$
at level $c$, by Lemma \ref{acotacion_eq} and the Rellich-Kondrachov Theorem the PS condition is
satisfied. Hence, the functional $\mathcal{F}_{\gamma}$ satisfies the PS condition. Moreover,
by Lemma \ref{lezero} the functional $\mathcal{F}_{\gamma}$ possesses the MP geometry. Therefore,
the hypotheses of the Mountain Pass Theorem are fulfilled and we conclude that the functional
$\mathcal{F}_{\gamma}$ possesses a critical point $u\in H_0^1(\Omega)$. Moreover, if we define the set of paths
\begin{equation*}
\Gamma:=\{g\in C([0,1],H_0^1(\Omega))\,;\, g(0)=0,\; g(1)=\hat u\},
\end{equation*}
with $\hat u$ given as in the proof of Lemma \ref{lezero}, then,
\begin{equation*}
\mathcal{F}_{\gamma}(u)=c:=\inf_{g\in\Gamma} \max_{\theta \in [0,1]} \mathcal{F}_{\gamma}(g(\theta)).
\end{equation*}
To show that $u>0$, let us consider the functional,
\begin{equation*}
\mathcal{F}_{\gamma}^+(u)=\mathcal{F}_{\gamma}(u^+),
\end{equation*}
where $u^+=\max\{u,0\}$. Repeating with minor changes the arguments carried out above,
one readily shows that what was proved for the functional $\mathcal{F}_{\gamma}$ still holds for
the functional $\mathcal{F}_{\gamma}^+$. Therefore, $u\geq0$ and by the Maximum Principle, $u>0$.
\end{proof}
\begin{remark}
Assuming that $\partial\Omega$ is a $\mathcal{C}^2$ manifold, by standard elliptic regularity theory, \cite[Sec. 8.3, Theorem 1]{EV},
it follows that $u\in H_0^1(\Omega)\cap H^2(\Omega)$ and thus, $u$ is a positive weak solution to problem \eqref{eq:CH}.
\end{remark}
%%%%%%%%%%%%%%%%%%%%%%%%%%%%%%%%%%%%%%%%%%%%%%%%%%

\subsection{Concentration-Compactness for the non-local problem \eqref{eq:nonlocal}.}\label{noexp2}

%%%%%%%%%%%%%%%%%%%%%%%%%%%%%%%%%%%%%%%%%%%%%%%%%

\noindent In this subsection we focus on the critical exponent case, $p=2^*-1$,
and our aim is to prove the PS condition for the functional $\mathcal{F}_{\gamma}$.
We carry out this task by means of a concentration-compactness argument based
on the following.
\begin{lemma}[P.-L. Lions,\cite{Lions}]\label{CC}
Let $\{u_n\}$ be a weakly convergent sequence to $u$ in $H_0^1(\Omega)$. Let $\mu$,
and $\nu$ be two nonnegative measures such that
\begin{equation*}
|\nabla u_n|^2\to\mu\quad\mbox{and}\quad|u_n|^{2^*}\to\nu\quad\mbox{as\ }n\to\infty.
\end{equation*}
Then, there exist a countable set $I$ of points $\{x_j\}_{j\in I}\subset \overline \Omega$
and some positive numbers $\mu_j$, and $\nu_j$ such that
\begin{equation}\label{deltas2}
\begin{split}
 |\nabla u_n|^2 \rightharpoonup \mu &=|\nabla u_0|^2+ \sum_{j\in I} \mu_j \delta_{x_j},\\
 |u_n|^{2^*} \rightharpoonup  \nu &=|u_0|^{2^*}+ \sum_{j\in I} \nu_j \delta_{x_j},
\end{split}
\end{equation}
where $\delta_{x_j}$ is the Dirac's delta centered at $x_j$ and satisfying
\begin{equation}\label{sobolev2}
\mu_j\geq S_N \nu_j^{2/2^*}.
\end{equation}
\end{lemma}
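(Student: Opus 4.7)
The plan is to follow Lions's original strategy: first reduce to a weakly null sequence, then derive a reverse Hölder inequality between the weak-* limit measures $\mu$ and $\nu$ by testing the Sobolev inequality against smooth cutoffs, and finally apply a measure-theoretic atomization lemma.

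First, I would set $v_n := u_n - u$, so that $v_n \rightharpoonup 0$ in $H_0^1(\Omega)$. Writing $|\nabla v_n|^2 \rightharpoonup \tilde\mu$ and $|v_n|^{2^*} \rightharpoonup \tilde\nu$ (after passing to a subsequence), the Brezis--Lieb lemma yields
\begin{equation*}
\int_{\Omega}|u_n|^{2^*}\,dx - \int_{\Omega}|v_n|^{2^*}\,dx \to \int_{\Omega}|u|^{2^*}\,dx,
\end{equation*}
and a similar identity for gradients; consequently $\mu = |\nabla u|^2 + \tilde\mu$ and $\nu = |u|^{2^*} + \tilde\nu$. Hence it is enough to prove the concentration statement for the sequence $\{v_n\}$, i.e.\ when $u = 0$.

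Next, for any $\varphi \in C_c^\infty(\mathbb{R}^N)$, apply the Sobolev inequality to $\varphi v_n \in H_0^1(\Omega)$:
\begin{equation*}
\left(\int_{\Omega} |\varphi v_n|^{2^*}\,dx\right)^{2/2^*} \le S_N^{-1}\int_{\Omega} |\nabla(\varphi v_n)|^2\,dx.
\end{equation*}
Since $v_n \to 0$ strongly in $L^2_{loc}(\Omega)$ by Rellich--Kondrachov, the cross terms $\int \varphi\,\nabla\varphi \cdot v_n \nabla v_n$ and $\int |\nabla\varphi|^2 v_n^2$ vanish in the limit, so passing to the limit gives the dual Sobolev inequality for the measures
\begin{equation*}
\left(\int_{\Omega}|\varphi|^{2^*}\,d\tilde\nu\right)^{2/2^*} \le S_N^{-1}\int_{\Omega}|\varphi|^2\,d\tilde\mu.
\end{equation*}

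With this reverse Hölder-type inequality in hand, I would invoke the standard measure-theoretic lemma (attributable to Lions) that asserts: if two nonnegative bounded Borel measures $\tilde\mu, \tilde\nu$ on $\overline\Omega$ satisfy the above estimate for every $\varphi \in C_c^\infty(\mathbb{R}^N)$, then $\tilde\nu$ is a countable sum of Dirac masses, $\tilde\nu = \sum_{j \in I}\nu_j\delta_{x_j}$, and at each atom $\tilde\mu(\{x_j\}) \ge S_N\nu_j^{2/2^*}$. Combined with the decomposition from Brezis--Lieb, this produces the claimed representation \eqref{deltas2} and the inequality \eqref{sobolev2}.

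The main obstacle, and the substantive step, is the atomization argument: one must show that $\tilde\nu$ is purely atomic and that the set of atoms is at most countable. The key observation is that by localizing $\varphi$ around a point $x_0$ and shrinking its support, the inequality forces either $\tilde\nu(\{x_0\}) = 0$ or $\tilde\mu(\{x_0\}) \ge S_N\tilde\nu(\{x_0\})^{2/2^*}$; meanwhile, if $\tilde\nu$ had a diffuse part on some Borel set $E$, one could choose smooth approximations of $\mathbf{1}_E$ scaled so that $\int |\varphi|^2\,d\tilde\mu$ can be made arbitrarily small while $\int |\varphi|^{2^*}\,d\tilde\nu$ stays bounded below, contradicting the inequality. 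Since $\tilde\nu(\overline\Omega) < \infty$, only countably many atoms carry positive $\tilde\nu$-mass, which gives the index set $I$.
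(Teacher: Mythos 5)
The paper does not prove this lemma --- it is quoted directly from Lions's concentration-compactness paper, so there is no in-paper proof for your proposal to match. Your sketch reproduces the standard argument: pass to the weakly null sequence $v_n = u_n - u$, decompose $\nu$ via the localized Brezis--Lieb lemma and $\mu$ by expanding $|\nabla v_n + \nabla u|^2$ and using weak $L^2$ convergence of $\nabla v_n$, test the Sobolev inequality against $\varphi v_n$ (the cross terms die by strong $L^2_{loc}$ convergence of $v_n$), and invoke Lions's atomization lemma on the resulting reverse H\"older inequality. That is correct and is precisely Lions's route.

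Two remarks worth recording. First, your construction yields only the inequality
$\mu \geq |\nabla u|^2\,dx + \sum_{j} \mu_j\,\delta_{x_j}$
with $\mu_j := \tilde\mu(\{x_j\})$, not the stated equality: the atomization lemma asserts that $\tilde\nu$ is purely atomic but says nothing about a diffuse remainder of $\tilde\mu$. This is in fact Lions's original formulation; the equality written in the lemma is a common, harmless overstatement, since only $\mu_j \geq S_N\nu_j^{2/2^*}$ (together with $\nu_j = \mu_j$ from the Palais--Smale identity) is used in Lemma~\ref{PScondition_eq}. Second, your heuristic for ruling out a diffuse part of $\tilde\nu$ --- approximating $\mathbf{1}_E$ by $\varphi$ so that $\int|\varphi|^2\,d\tilde\mu$ is small while $\int|\varphi|^{2^*}\,d\tilde\nu$ stays bounded below --- does not go through, because nothing in the hypotheses makes $\tilde\mu(E)$ small (and the inequality is homogeneous in $\varphi$, so rescaling by a constant gains nothing). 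The actual argument takes $\varphi = \mathbf{1}_A$ (by approximation) to get $\tilde\nu(A) \le S_N^{-2^*/2}\,\tilde\mu(A)^{2^*/2}$, hence $\tilde\nu \ll \tilde\mu$ with a superlinear modulus; since $2^*/2 > 1$, the Radon--Nikodym density of $\tilde\nu$ with respect to $\tilde\mu$ then vanishes $\tilde\mu$-a.e.\ off the countable atom set of $\tilde\mu$, which gives the atomization. Since you cite the lemma rather than prove it, this slip does not invalidate the sketch, but the heuristic as written would mislead if taken as the proof.
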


\begin{lemma}\label{PScondition_eq}
Assume $p=2^*-1$. Then, the functional $\mathcal{F}_{\gamma}$ satisfies the Palais-Smale
condition for any level $c$ such that,
\begin{equation*}
c<c^*=\frac{1}{N} S_N^{N/2}.
\end{equation*}
\end{lemma}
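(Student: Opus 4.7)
The plan is to follow the Brezis--Nirenberg concentration-compactness scheme, suitably adapted to the nonlocal term. Let $\{u_n\}\subset H_0^1(\Omega)$ be a PS sequence at level $c<c^*$. By Lemma \ref{acotacion_eq} the sequence is bounded in $H_0^1(\Omega)$, so passing to a subsequence I may assume $u_n\rightharpoonup u$ weakly in $H_0^1(\Omega)$, $u_n\to u$ strongly in every $L^q(\Omega)$ with $1\le q<2^*$ and a.e.\ in $\Omega$; since $(-\Delta)^{-1}:L^2(\Omega)\to L^2(\Omega)$ is compact, also $(-\Delta)^{-1/2}u_n\to (-\Delta)^{-1/2}u$ in $L^2(\Omega)$. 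Applying Lemma \ref{CC} produces the atomic decomposition \eqref{deltas2} with $\mu_j\ge S_N\nu_j^{2/2^*}$. The goal is to show that $I=\emptyset$, whence strong convergence in $L^{2^*}(\Omega)$ and then in $H_0^1(\Omega)$ will follow.

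The heart of the argument is a localization step at each atom $x_j$. I fix a cut-off $\phi_\varepsilon\in C_c^\infty(\mathbb{R}^N)$ with $\phi_\varepsilon\equiv 1$ on $B(x_j,\varepsilon)$, $\mathrm{supp}\,\phi_\varepsilon\subset B(x_j,2\varepsilon)$ and $|\nabla\phi_\varepsilon|\le C/\varepsilon$, and test $\mathcal{F}_\gamma'(u_n)$ against $\phi_\varepsilon u_n$. Letting $n\to\infty$, the nonlocal term $\gamma\int_\Omega \phi_\varepsilon u_n(-\Delta)^{-1}u_n\,dx$ converges to its counterpart with $u$ by the strong $L^2$ convergence above, the critical nonlinearity contributes $\int\phi_\varepsilon\,d\nu$, and the gradient term splits as $\int\phi_\varepsilon\,d\mu$ plus the cross term $\int u_n\nabla u_n\cdot\nabla\phi_\varepsilon\,dx$, which by Cauchy--Schwarz is bounded by $C\|u_n\|_{L^2(B(x_j,2\varepsilon))}\to 0$ as $\varepsilon\to 0$. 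Sending $\varepsilon\to 0$ yields $\mu_j=\nu_j$, and combined with $\mu_j\ge S_N\nu_j^{2/2^*}$ this forces, for each $j\in I$, the dichotomy $\nu_j=0$ or $\nu_j\ge S_N^{N/2}$.

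With this dichotomy in hand, the energy level rules out every atom. From $\mathcal{F}_\gamma(u_n)\to c$ and $\langle \mathcal{F}_\gamma'(u_n),u_n\rangle\to 0$ the quadratic and nonlocal-quadratic pieces cancel and, using $p+1=2^*$,
\[
c=\lim_{n\to\infty}\Bigl(\mathcal{F}_\gamma(u_n)-\tfrac{1}{2}\langle \mathcal{F}_\gamma'(u_n),u_n\rangle\Bigr)=\Bigl(\tfrac{1}{2}-\tfrac{1}{2^*}\Bigr)\lim_{n\to\infty}\int_\Omega|u_n|^{2^*}dx=\tfrac{1}{N}\int_\Omega d\nu\ge \tfrac{1}{N}\sum_{j\in I}\nu_j.
\]
If some $\nu_{j_0}>0$, the dichotomy gives $\nu_{j_0}\ge S_N^{N/2}$, hence $c\ge c^*$, contradicting $c<c^*$. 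Therefore $I=\emptyset$, so $\int_\Omega|u_n|^{2^*}dx\to \int_\Omega|u|^{2^*}dx$, and together with a.e.\ convergence this upgrades to $u_n\to u$ in $L^{2^*}(\Omega)$. Finally, feeding this into $\langle \mathcal{F}_\gamma'(u_n),u_n-u\rangle\to 0$ and using compactness of $(-\Delta)^{-1}$ yields $\|u_n\|_{H_0^1(\Omega)}\to \|u\|_{H_0^1(\Omega)}$, which, together with weak convergence in the Hilbert space $H_0^1(\Omega)$, gives strong convergence.

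The delicate point is the localization identity $\mu_j=\nu_j$, where one must carefully balance every term under the simultaneous limits $n\to\infty$ and $\varepsilon\to 0$. A noteworthy feature of the present problem is that, unlike the linear perturbation $\gamma u$ in the classical Brezis--Nirenberg equation \eqref{BN83}, the nonlocal perturbation $\gamma(-\Delta)^{-1}u$ is compact on $L^2(\Omega)$, so it does not disturb the measure balance and the critical level $c^*=\frac{1}{N}S_N^{N/2}$ coincides exactly with the Aubin--Talenti threshold. The nontrivial effect of the nonlocal term will therefore surface only when one has to construct a minimax path lying strictly below $c^*$; it is precisely that construction that will force the dimensional restriction $N\ge 7$.
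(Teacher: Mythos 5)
Your proof is correct and follows essentially the same concentration--compactness route as the paper: boundedness from Lemma~\ref{acotacion_eq}, the Lions decomposition of Lemma~\ref{CC}, the cut-off localization yielding $\mu_j=\nu_j$, the dichotomy $\nu_j=0$ or $\nu_j\geq S_N^{N/2}$, and the energy identity $c=\frac{1}{N}\int d\nu$ to exclude atoms below level $c^*$. The only (harmless) difference is that you spell out the final upgrade from $I=\emptyset$ to strong $H_0^1$-convergence, which the paper leaves implicit.
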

\begin{proof}
Although the proof is rather standard we include the details for the sake of completeness.
Let $\{u_n\}\subset H_0^1(\Omega)$ be a PS sequence of level $c<c^*$ for the functional
$\mathcal{F}_{\gamma}$. Thanks to Lemma \ref{acotacion_eq}, the sequence $\{u_n\}$ is uniformly
bounded and, as a consequence, we can assume that, up to a subsequence,
\begin{align}\label{conv:PS}
u_n \rightharpoonup u_0& \quad \hbox{weakly in}\quad H_0^1(\Omega),\notag\\
u_n \to u_0&\quad \hbox{strongly in}\quad L^q(\Omega), 1\leq q<2^*,\\
u_n \to u_0&\quad \hbox{a.e. in}\quad\Omega.\notag
\end{align}
Next, for $j\in I$ and $\varepsilon>0$, let $\varphi_{j,\varepsilon}\in C_0^\infty(\Omega)$ be a cut-off function such that,
\begin{equation}\label{cutoff2}
\varphi_{j,\varepsilon}=1 \quad \hbox{in}\quad B_{\varepsilon}(x_j),\quad \varphi_{j,\varepsilon}=0 \quad \hbox{in}
\quad B_{2\varepsilon}^c(x_j)\quad \hbox{and}\quad\displaystyle|\nabla \varphi_{j,\varepsilon}|\leq \frac{2}{\varepsilon},
\end{equation}
where $B_r(x_j)$ is the ball of radius $r>0$, centered at a point $x_j\in\overline{\Omega}$. Thus,
using $\varphi_{j,\varepsilon} u_n$ as a test function we find that,
\begin{align*}
\langle\mathcal{F}_{\gamma}'(u_n)|\varphi_{j,\varepsilon} u_n\rangle & =\int_{\Omega}\nabla u_n\cdot\nabla (\varphi_{j,\varepsilon} u_n)dx
-\gamma\int_{\Omega} \varphi_{j,\varepsilon} u_n (-\Delta)^{-1}u_ndx - \int_{\Omega} \varphi_{j,\varepsilon} |u_n|^{2^*}dx\\
&=\int_{\Omega} \varphi_{j,\varepsilon} |\nabla u_n|^2dx -\int_{\Omega} \varphi_{j,\varepsilon} |u_n|^{2^*}dx\\
&+\int_{\Omega} u_n \nabla u_n\cdot\nabla \varphi_{j,\varepsilon} dx -\gamma\int_{\Omega} \varphi_{j,\varepsilon} u_n (-\Delta)^{-1}u_ndx.
\end{align*}
Moreover, due to \eqref{deltas2} and \eqref{conv:PS},
\begin{equation*}
\lim_{n \to \infty}\langle \mathcal{F}_{\gamma}'(u_n)|\varphi_{j,\varepsilon}u_n\rangle =
\int_{\Omega} \varphi_{j,\varepsilon} d\mu-\int_{\Omega} \varphi_{j,\varepsilon} d\nu
-\gamma\int_{\Omega} \varphi_{j,\varepsilon} u_0 (-\Delta)^{-1}u_0dx
+\int_{\Omega} u_0 \nabla u_0\cdot\nabla\varphi_{j,\varepsilon}dx.
\end{equation*}
By construction,
\begin{equation*}
\lim_{\varepsilon \to 0} \left[-\gamma\int_{\Omega} \varphi_{j,\varepsilon} u_0 (-\Delta)^{-1}u_0dx
+\int_{\Omega} u_0\nabla u_0\cdot\nabla \varphi_{j,\varepsilon}dx\right]=0.
\end{equation*}
Then, as $\mathcal{F}_{\gamma}'(u_n)\to0$ in $\left(H_0^1(\Omega)\right)'$, we obtain that,
\begin{equation*}
\lim_{\varepsilon \to 0} \left(\int_{\Omega} \varphi_{j,\varepsilon} d\mu-\int_{\Omega} \varphi_{j,\varepsilon} d\nu\right)
=\mu_j-\nu_j=0,
\end{equation*}
and we conclude,
\begin{equation}\label{cons:concentration}
\nu_j = \mu_j.
\end{equation}
Finally, we have two options either the PS sequence has a convergent subsequence or it concentrates
around some of the points $x_j$. In other words, $\nu_j=\mu_j=0$, or there exists some $\nu_j>0$ such
that, by \eqref{sobolev2} and \eqref{cons:concentration}, $\nu_j \geq S_N^{N/2}$. In case of having
concentration, we find that
\begin{align*}
c&=\lim_{n \to \infty} \mathcal{F}_{\gamma}(u_n)=\lim_{n \to \infty} \mathcal{F}_{\gamma}(u_n)-\frac{1}{2}\langle\mathcal{F}_{\gamma}'(u_n)| u_n\rangle\\
&=\left(\frac{1}{2}-\frac{1}{2^*}\right)\int_{\Omega}|u_0|^{2^*}dx+\left(\frac{1}{2}-\frac{1}{2^*}\right)\nu_j\\
&\geq \frac{1}{N}S_N^{N/2}=c^*,
\end{align*}
in contradiction with the hypotheses $c<c^*$. Therefore, the PS sequence has a convergent subsequence and the PS condition is satisfied.
\end{proof}
It remains to show that we can obtain a path for $\mathcal{F}_{\gamma}$ under the critical
level $c^*$. In order to get such path we will take test functions of the form
\begin{equation*}
\tilde u_{\varepsilon}=M\phi_{\varepsilon},
\end{equation*}
where
\begin{equation}\label{phie2}
\phi_{\varepsilon}=\varphi_{j,R}\; u_{j,\varepsilon},
\end{equation}
with $\varphi_{j,R}$ a cut-off function defined as \eqref{cutoff2} for some $R>0$
small enough, $M>0$ a large enough constant such that $\mathcal{F}_{\gamma}(\tilde u_{\varepsilon})<0$
and $u_{j,\varepsilon}$ are the family of functions
\begin{equation}\label{extrem}
u_{j,\varepsilon}(x)=\left(\frac{\varepsilon}{\varepsilon^2+|x-x_j|^2}\right)^{\frac{N-2}{2}},
\end{equation}
for $\varepsilon>0$. Let us notice that the functions $u_{j,\varepsilon}$ are the extremal functions for the Sobolev's inequality in $\mathbb{R}^N$,
where the constant $S_N$ is achieved (see \cite{Ta}). Then,
\begin{equation*}
\int_{\mathbb{R}^N} |\nabla u_{j,\varepsilon}|^2 dx= S_N \left(\int_{\mathbb{R}^N} |u_{j,\varepsilon}|^{p+1}dx\right)^{2/2^*}.
\end{equation*}
For the sake of simplicity we will consider $x_j=0$, we will denote
$\varphi_{j,R}=\varphi$ under the construction \eqref{cutoff2} and $u_{j,\varepsilon}=u_{\varepsilon}$. We will also assume the normalization
\begin{equation}\label{norms2}
\|u_{\varepsilon}\|_{L^{2^*}(\Omega)}=1,
\end{equation}
so that the Sobolev constant is given by
\begin{equation*}
S_N=\int_{\mathbb{R}^N} |\nabla u_{\varepsilon}|^2dx.
\end{equation*}
Then, under the previous considerations we define the set of paths
\begin{equation*}
\Gamma_{\varepsilon}:=\{g\in C([0,1],H_0^1(\Omega))\,;\, g(0)=0,\; g(1)=\tilde u_{\varepsilon}\},
\end{equation*}
and we consider the minimax values
\begin{equation*}
c_{\varepsilon}=\inf_{g\in\Gamma_{\varepsilon}} \max_{t \in [0,1]} \mathcal{F}_{\gamma}(g(t)).
\end{equation*}
The final issue we must solve now is the fact that the levels $c_{\varepsilon}$ are
always below $c^*$ for $\varepsilon$ small enough. To that end, we recall the following.
\begin{lemma}[\cite{BN}, Lemma 1.1]\label{lees}
Let $\phi$ be the function denoted by \eqref{phie2} around the point $x_j=0$. Then,
\begin{equation}\label{esl2}
\int_{\mathbb{R}^N} \phi_{\varepsilon}^2dx
=\left\{\begin{array}{ll}
 C\varepsilon+O(\varepsilon^2) & \hbox{if}\quad N=3,\\
\frac{C\varepsilon^2}{2} |\log \varepsilon|+O(\varepsilon^2) & \hbox{if}\quad N=4,\\
C\varepsilon^2+O(\varepsilon^{N-2}) & \hbox{if}\quad N\geq 5.
\end{array}\right.
\end{equation}
Moreover,
\begin{equation}\label{esn2}
\|\nabla \phi_{\varepsilon}\|_2^2 =S_N+ O(\varepsilon^{N-2}).
\end{equation}
\end{lemma}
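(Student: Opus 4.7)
The plan is to establish both estimates through direct computation, exploiting the natural scaling of the extremal functions. The central step is the change of variables $x=\varepsilon y$ applied to the explicit expressions
\begin{equation*}
u_\varepsilon^2(x) = \varepsilon^{N-2}(\varepsilon^2+|x|^2)^{-(N-2)},\qquad |\nabla u_\varepsilon|^2(x)=(N-2)^2\varepsilon^{N-2}|x|^2(\varepsilon^2+|x|^2)^{-N},
\end{equation*}
which collapses the $\varepsilon$-dependence into overall prefactors and into the scale $R/\varepsilon$ at which the cutoff $\varphi$ turns off.

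For the $L^2$ estimate, the substitution yields
\begin{equation*}
\int_{\mathbb{R}^N}\phi_\varepsilon^2\,dx=\varepsilon^2\int_{\mathbb{R}^N}\varphi^2(\varepsilon y)(1+|y|^2)^{-(N-2)}\,dy,
\end{equation*}
with $\varphi(\varepsilon y)=1$ for $|y|\leq R/\varepsilon$ and $\varphi(\varepsilon y)=0$ for $|y|\geq 2R/\varepsilon$. The three cases of \eqref{esl2} then reflect the large-$M$ behavior of $\int_{|y|\leq M}(1+|y|^2)^{-(N-2)}\,dy$, where the integrand decays like $|y|^{-2(N-2)}$: the integral converges when $N\geq 5$, diverges like $\log M$ when $N=4$, and grows linearly in $M$ when $N=3$. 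Multiplying by $\varepsilon^2$ and absorbing the transition annulus $R/\varepsilon\leq|y|\leq 2R/\varepsilon$ into the error gives, respectively, the three asymptotic formulas stated.

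For the gradient, I expand
\begin{equation*}
|\nabla\phi_\varepsilon|^2=\varphi^2|\nabla u_\varepsilon|^2+2\varphi u_\varepsilon\,\nabla\varphi\cdot\nabla u_\varepsilon+u_\varepsilon^2|\nabla\varphi|^2,
\end{equation*}
and decompose the principal piece as $\int_{\mathbb{R}^N}\varphi^2|\nabla u_\varepsilon|^2=\int_{\mathbb{R}^N}|\nabla u_\varepsilon|^2-\int(1-\varphi^2)|\nabla u_\varepsilon|^2=S_N-\int(1-\varphi^2)|\nabla u_\varepsilon|^2$, using the normalization \eqref{norms2}. The three remaining integrals are supported on $B_{2R}\setminus B_R$; on this annulus the pointwise bounds $u_\varepsilon(x)\leq C\varepsilon^{(N-2)/2}|x|^{-(N-2)}$ and $|\nabla u_\varepsilon|(x)\leq C\varepsilon^{(N-2)/2}|x|^{-(N-1)}$ together with $|\nabla\varphi|\leq 2/R$ and Cauchy--Schwarz on the cross term yield an $O(\varepsilon^{N-2})$ bound for each, proving \eqref{esn2}.

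Nothing in this argument is genuinely delicate; the only point requiring care is the case $N=3$ of the $L^2$ estimate, where the contribution from the transition annulus is itself of order $\varepsilon$ rather than $\varepsilon^2$, so one cannot simply bound it by the error. The remedy is an explicit computation of $\int_0^{R/\varepsilon}r^2/(1+r^2)\,dr$ via $\arctan$, which isolates an $O(1)$ correction and confirms that the annulus contribution merges with the principal term into a single $C\varepsilon+O(\varepsilon^2)$ expansion with an appropriately redefined constant.
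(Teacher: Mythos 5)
The paper does not prove this lemma; it is Lemma 1.1 of Brezis--Nirenberg \cite{BN}, quoted as a known result, so there is no in-paper proof to compare against. Your argument is the standard scaling computation that underlies the BN estimate, and it is essentially correct. The rescaling $x=\varepsilon y$ correctly collapses the $\varepsilon$-dependence into a prefactor and into the effective cutoff radius $R/\varepsilon$; the three regimes of \eqref{esl2} then come out of the tail behaviour of $(1+|y|^2)^{-(N-2)}$ exactly as you describe, and the gradient estimate follows from the decomposition $\int\varphi^2|\nabla u_\varepsilon|^2 = S_N - \int(1-\varphi^2)|\nabla u_\varepsilon|^2$ together with the pointwise decay of $u_\varepsilon$ and $\nabla u_\varepsilon$ on the annulus $B_{2R}\setminus B_R$, each corrector being $O(\varepsilon^{N-2})$. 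You are also right to flag $N=3$ as the only delicate case: there the transition annulus contributes at the same order $\varepsilon$ as the ball, which is precisely why the leading constant in BN depends on the cutoff $\varphi$ when $N=3$.

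One small imprecision in the $N=3$ remedy: the $\arctan$ identity $\int_0^{M}r^2/(1+r^2)\,dr=M-\arctan M$ only captures the contribution from $B_R$ (where $\varphi\equiv 1$) and isolates its $O(1)$ correction; it says nothing by itself about the annulus piece, which still has to be added in. The cleaner route is to undo the scaling and write
\begin{equation*}
\int_{\mathbb{R}^3}\phi_\varepsilon^2\,dx=\varepsilon\int\frac{\varphi^2(x)}{\varepsilon^2+|x|^2}\,dx
=\varepsilon\int\frac{\varphi^2(x)}{|x|^2}\,dx-\varepsilon^3\int\frac{\varphi^2(x)}{|x|^2(\varepsilon^2+|x|^2)}\,dx,
\end{equation*}
noting that the first integral is finite (so $C=\int\varphi^2/|x|^2$) and that the second, in spherical coordinates, reduces to $4\pi\varepsilon^3\int_0^{2R}\varphi^2\,(\varepsilon^2+r^2)^{-1}\,dr=O(\varepsilon^2)$. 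This makes the ``appropriately redefined constant'' explicit and shows the error is genuinely $O(\varepsilon^2)$ rather than being asserted. With that substitution your proof is complete.
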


\begin{remark}
Using similar arguments one could also estimate $\|\phi_{\varepsilon}\|_{L^{2^*}(\Omega)}\sim C$ however,
it is simpler if we normalize it as done in \eqref{norms2}.
\end{remark}
\noindent To carry out the analysis of the levels $c_{\varepsilon}$
we need estimates dealing with the following  term
$\int_{\Omega}\phi_{\varepsilon}(-\Delta)^{-1}\phi_{\varepsilon}
dx$. To do so, we prove the following.
\begin{lemma}\label{lem:estimacion_nolocal}
\label{lees22}
Let $\phi_{\varepsilon}$ be the function denoted by \eqref{phie2} around the point $x_j=0$. Then,
there exists a constant $C>0$ independent of $\varepsilon$ such that
\begin{equation}\label{radial2}
\int_{\Omega}\phi_{\varepsilon}(-\Delta)^{-1}\phi_{\varepsilon}dx> C\varepsilon^{4}\quad \mbox{if}\ N=6,
\end{equation}
\begin{equation}\label{radial22}
\int_{\Omega}\phi_{\varepsilon}(-\Delta)^{-1}\phi_{\varepsilon}dx> C\varepsilon^{\mu}\quad \mbox{if}\ N\geq7,
\end{equation}
where $\frac{N}{2}+1>\mu>1+\frac{N}{N-4}$.
\end{lemma}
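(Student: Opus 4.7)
The plan is to bypass the absence of an explicit formula for $(-\Delta)^{-1}\phi_\varepsilon$ by exploiting the variational characterization of the inverse Laplacian. Setting $W_\varepsilon:=(-\Delta)^{-1}\phi_\varepsilon\in H_0^1(\Omega)$, so that $-\Delta W_\varepsilon=\phi_\varepsilon$, testing against any $v\in H_0^1(\Omega)$ gives $\int_\Omega \phi_\varepsilon v\,dx=\int_\Omega \nabla W_\varepsilon\cdot\nabla v\,dx$. Choosing $v=W_\varepsilon$ and applying Cauchy--Schwarz to a generic $v$ then produce
$$
\int_\Omega \phi_\varepsilon(-\Delta)^{-1}\phi_\varepsilon\,dx \;=\; \|\nabla W_\varepsilon\|_{L^2}^{2} \;\ge\; \frac{\bigl(\int_\Omega \phi_\varepsilon v\,dx\bigr)^{2}}{\|\nabla v\|_{L^2}^{2}}
$$
for every $v\in H_0^1(\Omega)\setminus\{0\}$. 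The natural competitor is $v=\phi_\varepsilon$ itself, which is admissible since $\phi_\varepsilon\in H_0^1(\Omega)$ by construction.

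With this choice the right-hand side becomes the Rayleigh quotient $(\|\phi_\varepsilon\|_{L^2}^{2})^{2}/\|\nabla\phi_\varepsilon\|_{L^2}^{2}$, and both norms are already supplied by the Talenti--Brezis--Nirenberg expansions recorded in Lemma \ref{lees}: for $N\ge 5$ one has $\|\phi_\varepsilon\|_{L^2}^{2}=C\varepsilon^{2}+O(\varepsilon^{N-2})$ with $C>0$, and $\|\nabla\phi_\varepsilon\|_{L^2}^{2}=S_N+O(\varepsilon^{N-2})$. Substituting yields
$$
\int_\Omega \phi_\varepsilon(-\Delta)^{-1}\phi_\varepsilon\,dx \;\ge\; \frac{\bigl(C\varepsilon^{2}+O(\varepsilon^{N-2})\bigr)^{2}}{S_N+O(\varepsilon^{N-2})} \;\ge\; C'\varepsilon^{4}
$$
for all sufficiently small $\varepsilon$ and some $C'>0$ independent of $\varepsilon$.

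This is precisely \eqref{radial2} when $N=6$. For $N\ge 7$ the strict inequalities $1+\tfrac{N}{N-4}<4<\tfrac{N}{2}+1$ both reduce to the assumption $N>6$, so $\mu=4$ lies inside the admissible window $\bigl(1+\tfrac{N}{N-4},\tfrac{N}{2}+1\bigr)$; since $\varepsilon^{4}\ge \varepsilon^{\mu}$ whenever $0<\varepsilon\le 1$ and $\mu\ge 4$, estimate \eqref{radial22} follows immediately from the previous display. I do not expect any serious obstacle beyond the usual care with cutoff error terms: the only genuine difficulty, namely handling the non-local operator $(-\Delta)^{-1}$ without a closed-form expression for $W_\varepsilon$, is precisely what the variational characterization removes, after which the proof reduces to inserting the already-established expansions of Lemma \ref{lees} into the Rayleigh-quotient lower bound.
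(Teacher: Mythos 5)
Your proof is correct, and it reaches the lemma by a genuinely different and arguably cleaner route than the paper. You use the duality/Cauchy--Schwarz identity $\int_\Omega \phi_\varepsilon(-\Delta)^{-1}\phi_\varepsilon\,dx = \|\nabla W_\varepsilon\|_{L^2}^2 \geq \bigl(\int_\Omega\phi_\varepsilon v\,dx\bigr)^2/\|\nabla v\|_{L^2}^2$ with the self-competitor $v=\phi_\varepsilon$, which reduces everything to the already-recorded Brezis--Nirenberg expansions of $\|\phi_\varepsilon\|_{L^2}^2$ and $\|\nabla\phi_\varepsilon\|_{L^2}^2$ from Lemma \ref{lees}. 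The paper instead runs a barrier comparison: it writes $v_\varepsilon=(-\Delta)^{-1}\phi_\varepsilon$ as a solution of a Dirichlet problem on a ball, constructs an explicit parabolic subsolution $\widetilde v$ on a shrinking ball $B_{2\rho}(0)$ with $\rho=\varepsilon^\alpha$, applies the Maximum Principle to get $v_\varepsilon\geq\widetilde v$, and then estimates $\int_{B_\rho} u_\varepsilon$ directly. What each buys: the barrier argument produces a one-parameter family of lower bounds $\varepsilon^{\mu}$ with $\mu$ ranging over the whole interval $\bigl(1+\tfrac{N}{N-4},\tfrac{N}{2}+1\bigr)$, so for large $N$ it yields exponents approaching $2$; your variational argument pins down the single (nonoptimal for large $N$) exponent $\mu=4$ and thereby proves \eqref{radial22} only for $\mu\in[4,\tfrac{N}{2}+1)$. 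That is strictly weaker as a statement, but it is exactly what is needed for the subsequent Lemma \ref{lemma_inverse}: the condition there is $\mu<N-2$, and $4<N-2$ holds precisely when $N\geq7$, so your version yields the identical dimensional restriction. Your argument also makes transparent why the $N=6$ borderline case fails — $\varepsilon^4$ is then exactly the order of the error $O(\varepsilon^{N-2})$ — without any parameter tuning. Finally, note that the duality bound is, in principle, self-improving: optimizing over $v$ instead of taking $v=\phi_\varepsilon$ (the optimizer being $W_\varepsilon$ itself) could recover the sharper exponents, but that is not required here.
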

\begin{proof}
Let $v_{\varepsilon}(x)=(-\Delta)^{-1}\phi_{\varepsilon}(x)$ and note that because of the definition of the
cut-off function \eqref{cutoff2}, we can choose $v_{\varepsilon}(x)$ such that
\begin{equation*}
        \left\{
        \begin{tabular}{lcl}
        $(-\Delta) v_{\varepsilon}=\phi_{\varepsilon}$ & &in $B_{2R}(0)$, \\
        $v_{\varepsilon}=0$                   & &in $\partial B_{2R}(0).$
        \end{tabular}
        \right.
\end{equation*}
Moreover, since $\phi_{\varepsilon}>0$ in $B_{2R}(0)$, thanks to the Maximum Principle, it follows
that $v_\varepsilon>0$ in $B_{2R}(0)$. Now, let us notice that for any $x\in B_{R}(0)$
we have $\phi_{\varepsilon}(x)=u_{\varepsilon}(x)$ as well as
\begin{equation*}
\frac{\varepsilon^{-\frac{N-2}{2}}}{\left(1+\left(\frac{R}{\varepsilon}\right)^2\right)^{\frac{N-2}{2}}}
\leq u_{\varepsilon}(x)\leq \varepsilon^{-\frac{N-2}{2}}.
\end{equation*}
Next, take $\rho<\frac{R}{2}$ and consider the function
$\widetilde{v}(x)=\frac{2}{N}\left(1-\left(\frac{|x|}{2\rho}\right)^2\right)_+$, where
$(\cdot)_+$ stands for the positive part. Then, $\widetilde{v}$ satisfies the problem
\begin{equation*}
        \left\{
        \begin{tabular}{lcl}
        $(-\Delta) \widetilde{v}=\frac{1}{\rho^2}$ & &in $B_{2\rho}(0)$, \\
        $\widetilde{v}=0$                            & &in $\partial B_{2\rho}(0).$
        \end{tabular}
        \right.
\end{equation*}
To apply a comparison principle we choose $\rho=\varepsilon^{\alpha}$, with $\alpha>0$, such that
\begin{equation*}
(-\Delta) \widetilde{v}\leq(-\Delta)v_{\varepsilon}\quad\quad \mbox{in}\quad B_{2\rho}(0).
\end{equation*}
Then, given $\varepsilon>0$ arbitrarily small, we distinguish two cases depending upon $\alpha\geq1$
or $\alpha<1$. In the first case, since
\begin{equation*}
u_{\varepsilon}(x)\bigg|_{x\in B_{2\rho}(0)}
\geq \frac{\varepsilon^{-\frac{N-2}{2}}}{\left(1+\left(\frac{2\rho}{\varepsilon}\right)^2\right)^{\frac{N-2}{2}}}
=\frac{\varepsilon^{-\frac{N-2}{2}}}{\left(1+4\varepsilon^{2(\alpha-1)}\right)^{\frac{N-2}{2}}}\geq c_1\varepsilon^{-\frac{N-2}{2}},
\end{equation*}
for a positive constant $c_1<1$, we need to choose $\alpha$ such that,
\begin{equation*}
\frac{1}{\varepsilon^{2\alpha}}\leq c_1\varepsilon^{-\frac{N-2}{2}}.
\end{equation*}
We conclude $2\alpha\leq\frac{N-2}{2}$. Therefore, we obtain the range $1\leq\alpha\leq\frac{N-2}{4}$,
which necessarily requires $N\geq6$. In the second case, $\alpha<1$, since
\begin{equation*}
u_{\varepsilon}(x)\bigg|_{x\in B_{2\rho}(0)}
\geq\frac{\varepsilon^{-\frac{N-2}{2}}}{\left(1+4\varepsilon^{-2(1-\alpha)}\right)^{\frac{N-2}{2}}}
\geq c_2\varepsilon^{-\frac{N-2}{2}+(1-\alpha)(N-2)},
\end{equation*}
for a positive constant $c_2<\frac{1}{4}$, we need to choose $\alpha$ such that
\begin{equation*}
\frac{1}{\varepsilon^{2\alpha}}\leq c_2\varepsilon^{-\frac{N-2}{2}+(1-\alpha)(N-2)}.
\end{equation*}
Then, we obtain the condition $\alpha\geq\frac{1}{2}+\frac{1}{N-4}$ that,
together with $\alpha<1$, implies $N>6$. Finally, by construction,
\begin{equation*}
0=\widetilde{v}(x)\bigg|_{x\in \partial B_{2\rho}(0)}< v_{\varepsilon}(x)\bigg|_{x\in \partial B_{2\rho}(0)}
\end{equation*}
Because of the Maximum Principle, we conclude that $v_{\varepsilon}(x)>\widetilde{v}(x)$ for $x\in B_{2\rho}(0)$ thus,
\begin{align*}
\int_{\Omega}\phi_{\varepsilon}(-\Delta)^{-1}\phi_{\varepsilon} dx
&\geq\int_{B_{R}(0)}u_{\varepsilon}(x)v_{\varepsilon}(x)dx>\int_{B_{2\rho}(0)}u_{\varepsilon}(x)\widetilde{v}(x)dx\\
&\geq\int_{B_{\rho}(0)}u_{\varepsilon}(x)\widetilde{v}(x)dx=\frac{2}{N}\int_{B_{\rho}(0)}u_{\varepsilon}(x)\left(1-\left(\frac{|x|}{2\rho}\right)^2\right)dx\\
&\geq \frac{3}{2N}\int_{B_{\rho}(0)}u_{\varepsilon}(x)dx.
\end{align*}
On the other hand,
\begin{align*}
\int_{B_{\rho}(0)}u_{\varepsilon}(x)dx
&=\varepsilon^{-\frac{N-2}{2}}\int_{B_{\rho}(0)}\frac{1}{\left(1+\left(\frac{|x|}{\varepsilon}\right)^2\right)^{\frac{N-2}{2}}}dx
=\varepsilon^{-\frac{N-2}{2}}\int_0^{\rho}\frac{r^{N-1}}{\left(1+\left(\frac{r}{\varepsilon}\right)^2\right)^{\frac{N-2}{2}}}dr\\
&=\varepsilon^{-\frac{N-2}{2}+N-1}\int_0^{\rho}\frac{\left(r/\varepsilon\right)^{N-1}}{\left(1+\left(\frac{r}{\varepsilon}\right)^2\right)^{\frac{N-2}{2}}}dr
=\varepsilon^{\frac{N}{2}+1}\int_0^{\rho/\varepsilon}\frac{s^{N-1}}{\left(1+s^2\right)^{\frac{N-2}{2}}}ds\\
&\geq c\varepsilon^{\frac{N}{2}+1}\int_0^{\rho/\varepsilon}s^{N-1}ds=c\varepsilon^{\frac{N}{2}+1}\left(\frac{\rho}{\varepsilon}\right)^N,
\end{align*}
for a positive constant $c$. Then, since we have chosen $\rho=\varepsilon^{\alpha}$, we obtain
\begin{equation}\label{a1}
\int_{\Omega}\phi_{\varepsilon}(-\Delta)^{-1}\phi_{\varepsilon} dx
> C\varepsilon^{\frac{N}{2}+1+N(\alpha-1)}\quad \mbox{for}\ \alpha\geq1, N\geq6,
\end{equation}
and
\begin{equation}\label{a2}
\int_{\Omega}\phi_{\varepsilon}(-\Delta)^{-1}\phi_{\varepsilon} dx
> C\varepsilon^{\frac{N}{2}+1-N(1-\alpha)}\quad \mbox{for}\ 1>\alpha>\frac{1}{2}+\frac{1}{N-4}, N\geq7.
\end{equation}
Now, we note that for the range $\alpha\geq1$ the value $\alpha=1$ provides us with
the optimum estimate in \eqref{a1} and, thus, from here we obtain
\begin{equation}\label{a3}
\int_{\Omega}\phi_{\varepsilon}(-\Delta)^{-1}\phi_{\varepsilon} dx
> C\varepsilon^{\frac{N}{2}+1}\quad \mbox{for } N\geq6.
\end{equation}
Moreover, since $\frac{N}{2}+1>\frac{N}{2}+1-N(1-\alpha)$ for $1>\alpha>\frac{1}{2}+\frac{1}{N-4}$,
inequality \eqref{a2} provides a stronger bound than the one provided by inequality \eqref{a3}
for any $N\geq7$. Thus, inequality \eqref{a3} is only useful for $N=6$, from where we conclude
\eqref{radial2}. Finally, setting $\mu=\frac{N}{2}+1-N(1-\alpha)$ in \eqref{a2}, it follows that
$\frac{N}{2}+1>\mu>1+\frac{N}{N-4}$, and we conclude \eqref{radial22}.
\end{proof}

Next we perform the analysis of the levels $c_{\varepsilon}$, proving that, in fact, the levels
$c_{\varepsilon}$ are always below the critical level $c^*$ provided $\varepsilon>0$ is small enough.

\begin{lemma}\label{lemma_inverse}
Assume $p=2^*-1$ and $N\geq7$. Then, there exists $\varepsilon>0$ small enough such that,
\begin{equation*}
\sup_{0\leq t\leq1} \mathcal{F}_{\gamma}(t\tilde u_{\varepsilon})< \frac{1}{N} S_N^{N/2}.
\end{equation*}
\end{lemma}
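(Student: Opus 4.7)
The plan is to recognize that the supremum of $\mathcal{F}_{\gamma}$ along the ray $\{t\tilde u_{\varepsilon}\}_{t\in[0,1]}$ reduces to a one-variable optimization with a closed-form maximum, and then to plug in the sharp asymptotics of Lemma \ref{lees} and Lemma \ref{lem:estimacion_nolocal}. Setting
\[
A_{\varepsilon}:=\|\nabla\phi_{\varepsilon}\|_2^2,\quad B_{\varepsilon}:=\int_{\Omega}\phi_{\varepsilon}(-\Delta)^{-1}\phi_{\varepsilon}\,dx,\quad D_{\varepsilon}:=\|\phi_{\varepsilon}\|_{L^{2^*}(\Omega)}^{2^*},
\]
a direct calculation gives $\mathcal{F}_{\gamma}(s\phi_{\varepsilon})=\tfrac{s^2}{2}(A_{\varepsilon}-\gamma B_{\varepsilon})-\tfrac{s^{2^*}}{2^*}D_{\varepsilon}$, with unique positive critical point $s_{\ast}=\bigl((A_{\varepsilon}-\gamma B_{\varepsilon})/D_{\varepsilon}\bigr)^{1/(2^*-2)}$ and global maximum
\[
\max_{s\geq 0}\mathcal{F}_{\gamma}(s\phi_{\varepsilon})=\frac{1}{N}\,\frac{(A_{\varepsilon}-\gamma B_{\varepsilon})^{N/2}}{D_{\varepsilon}^{(N-2)/2}}.
\]
Because $M$ is chosen so that $\mathcal{F}_{\gamma}(\tilde u_{\varepsilon})=\mathcal{F}_{\gamma}(M\phi_{\varepsilon})<0$, one necessarily has $M>s_{\ast}$, so this explicit maximum coincides with $\sup_{t\in[0,1]}\mathcal{F}_{\gamma}(t\tilde u_{\varepsilon})$.

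Next I would substitute the asymptotics: Lemma \ref{lees} gives $A_{\varepsilon}=S_N+O(\varepsilon^{N-2})$, the normalization \eqref{norms2} together with a routine cut-off tail estimate yields $D_{\varepsilon}=1+O(\varepsilon^N)$, and Lemma \ref{lem:estimacion_nolocal} provides $B_{\varepsilon}>C\varepsilon^{\mu}$ for any admissible $\mu$ with $1+\tfrac{N}{N-4}<\mu<\tfrac{N}{2}+1$. A first-order Taylor expansion of $x\mapsto x^{N/2}$ around $S_N$, combined with the factor $D_{\varepsilon}^{-(N-2)/2}=1+O(\varepsilon^N)$, then produces
\[
\frac{(A_{\varepsilon}-\gamma B_{\varepsilon})^{N/2}}{D_{\varepsilon}^{(N-2)/2}}\leq S_N^{N/2}-c_1\gamma\,\varepsilon^{\mu}+O(\varepsilon^{N-2}),
\]
for some constant $c_1=c_1(N)>0$.

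The decisive step — and the place where the hypothesis $N\geq 7$ is actually used — is the comparison of the gain $\varepsilon^{\mu}$ with the loss $\varepsilon^{N-2}$ as $\varepsilon\to 0^+$: I need $\mu<N-2$ for the negative $-c_1\gamma\varepsilon^{\mu}$ to dominate. Since Lemma \ref{lem:estimacion_nolocal} allows $\mu$ strictly below $\tfrac{N}{2}+1$, and since $\tfrac{N}{2}+1<N-2$ exactly when $N>6$, for $N\geq 7$ I can select $\mu$ satisfying both bounds, so that the negative term overwhelms the perturbation for all sufficiently small $\varepsilon>0$ and yields the strict inequality $\sup_{t\in[0,1]}\mathcal{F}_{\gamma}(t\tilde u_{\varepsilon})<\tfrac{1}{N}S_N^{N/2}$.

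The main obstacle in the whole argument is not any particular calculation but rather the quantitative sharpness of Lemma \ref{lem:estimacion_nolocal}: in the classical Brezis--Nirenberg setting of \cite{BN}, the linear perturbation contributes a gain of order $\varepsilon^2$ (up to a logarithm in $N=4$), which beats $\varepsilon^{N-2}$ already for $N\geq 4$; here the inverse Laplacian produces only a lower bound of order $\varepsilon^{\mu}$ with $\mu$ close to $\tfrac{N}{2}+1$, and this weaker quantitative control is precisely the mechanism that shifts the admissible dimension range from $N\geq 4$ to $N\geq 7$.
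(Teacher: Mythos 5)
Your proof is correct and follows essentially the same approach as the paper: both reduce the supremum over the ray to the closed-form maximum of a one-variable function, substitute the asymptotics from Lemma~\ref{lees} and Lemma~\ref{lem:estimacion_nolocal}, and conclude from the exponent comparison $\mu<N-2$ (possible for $N\geq 7$ since $\mu<\frac{N}{2}+1<N-2$) that the mountain-pass level dips below $\frac{1}{N}S_N^{N/2}$. Your first-order Taylor expansion of $x\mapsto x^{N/2}$ and the explicit bookkeeping of $D_\varepsilon$ are a mild stylistic variant of the paper's more direct step, namely that $\left(S_N+O(\varepsilon^{N-2})-\gamma F(\varepsilon)\right)^{N/2}<S_N^{N/2}$ as soon as $O(\varepsilon^{N-2})<\gamma F(\varepsilon)$.
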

\begin{proof}
Using \eqref{esn2} in Lemma \ref{lees} and assuming the normalization \eqref{norms2}, we find
\begin{align*}
g(t):=\mathcal{F}_{\gamma}(t\tilde u_{\varepsilon})
&=\frac{t^2 M^2}{2}\|\nabla \phi_{\varepsilon}\|_{L^2(\Omega)}^2
-\frac{t^2 M^2 \gamma}{2}\int_{\Omega}\phi_{\varepsilon}(-\Delta)^{-1}\phi_{\varepsilon} dx
-\frac{t^{2^*} M^{2^*}}{2^*}\\
&=\frac{M^2}{2}\left( S_N+O(\varepsilon^{N-2})-\gamma F(\varepsilon)\right)t^2-\frac{M^{2^*}}{2^*}t^{2^*},
\end{align*}
where $F(\varepsilon)=\int_{\Omega}\phi_{\varepsilon}(-\Delta)^{-1}\phi_{\varepsilon}dx$. It is clear that
$\displaystyle \lim_{t\to \infty} g(t)=-\infty$ as well as that $g(t)>0$ for $t>0$ small
enough, therefore, the function $g(t)$ possesses a maximum value at the point,
\begin{equation*}
t_{\varepsilon}:=\left(\frac{M^2\left( S_N+O(\varepsilon^{N-2})-\gamma F(\varepsilon)\right)}{M^{2^*}}\right)^{\frac{1}{2^*-2}}.
\end{equation*}
Moreover, at this point $t_{\varepsilon}$ we have,
\begin{equation*}
g(t_{\varepsilon})=\frac{1}{N}\left(S_N+O(\varepsilon^{N-2})-\gamma F(\varepsilon)\right)^{N/2}.
\end{equation*}
Then, the proof will be completed if the inequality
\begin{equation*}
\frac{1}{N}\left(S_N+O(\varepsilon^{N-2})-\gamma F(\varepsilon)\right)^{N/2}<\frac{1}{N}S_N^{N/2},
\end{equation*}
or, equivalently, the inequality
\begin{equation}\label{eq1}
O(\varepsilon^{N-2})<\gamma F(\varepsilon),
\end{equation}
holds true provided $\varepsilon$ is small enough. Moreover, because of \eqref{radial22} in
Lemma \ref{lees22}, we have that $F(\varepsilon)>C\varepsilon^{\mu}$ with
$\frac{N}{2}+1>\mu>1+\frac{N}{N-4}$. To finish the proof, let us show that, in fact,
the stronger inequality
\begin{equation}\label{ultt}
O(\varepsilon^{N-2})<C\varepsilon^{\mu},
\end{equation}
holds true provided $\varepsilon$ is small enough. To that end is enough to observe that \eqref{ultt} requires
 $N-2>\mu$ that, together $\frac{N}{2}+1>\mu>1+\frac{N}{N-4}$, provides us with
 the condition $1+\frac{N}{N-4}<N-2$ which is equivalent to $(N-2)(N-6)>0$,
that is obviously satisfied. Thus, inequality \eqref{eq1} is satisfied provided $\varepsilon$ is small enough.
\end{proof}

\begin{remark}
In the proof of Lemma \ref{lemma_inverse} we proved that, for $N\geq7$,
$O(\varepsilon^{N-2})<C\varepsilon^{\mu}$ provided $\varepsilon$ is small enough and, because of \eqref{radial22} in
Lemma \ref{lees22}, we concluded $O(\varepsilon^{N-2})<C\varepsilon^{\mu}<F(\varepsilon)$. If we take $N=6$ and we
repeat the steps above, we readily find that \eqref{radial2} in Lemma \ref{lees22} lead us
to prove $O(\varepsilon^4)<C\varepsilon^4$, that can not be ensured either $\varepsilon>0$ arbitrarily small or not.
As we will see below (see Lemma \ref{level}), this restriction on the dimension is
not a merely consequence of the accuracy of the estimates in Lemma \ref{lees22}.
\end{remark}

\begin{proof}[Proof of Theorem \ref{Thequation}.]\hfill\break
Thanks to Lemma \ref{lezero} and Lemma \ref{lemma_inverse}, we find that
\begin{equation*}
0<c_{\varepsilon}\leq \sup_{0\leq t\leq1} \mathcal{F}_{\gamma}(t\tilde u_{\varepsilon})< \frac{1}{N} S_N^{N/2},
\end{equation*}
provided $\varepsilon>0$ is small enough. Because of Lemma \ref{lezero} the functional $\mathcal{F}_{\gamma}$ has the
MP geometry. Moreover, because of Lemma \ref{PScondition_eq} the functional $\mathcal{F}_{\gamma}$ satisfies
the PS condition for any level $c_{\varepsilon}$ provided $\varepsilon>0$ is small enough. Therefore, we can apply the
Mountain Pass Theorem to obtain the existence of a critical point $u\in H_0^1(\Omega)$. The rest
follows as in the subcritical case.
\end{proof}
%%%%%%%%%%%%%%%%%%%%%%%%%%%%%%%%%%%%%%%%%%%%%%%%%
\section{Existence of positive solutions for the system \eqref{system}}\label{concomp2}

%%%%%%%%%%%%%%%%%%%%%%%%%%%%%%%%%%%%%%%%%%%%%%%%%

In this section we provide the existence result for the system \eqref{system}. We start by
stating the analogous results of those obtained for the functional $\mathcal{F}_{\gamma}$.
\begin{lemma}\label{lezero_sys}
The functional $\mathcal{J}_{\gamma}$ denoted by \eqref{fn:system} has the MP geometry.
\end{lemma}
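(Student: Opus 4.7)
The plan is to verify the two standard conditions of the Mountain Pass geometry for $\mathcal{J}_{\gamma}$ on the product space $H_0^1(\Omega)\times H_0^1(\Omega)$, mimicking closely the argument already carried out in Lemma \ref{lezero} for the scalar functional $\mathcal{F}_{\gamma}$. Concretely, I need to exhibit a strict local minimum at $(0,0)$ with $\mathcal{J}_{\gamma}(0,0)=0$, and a point $(\hat u,\hat v)$ outside a small ball around the origin at which $\mathcal{J}_{\gamma}$ is strictly negative.

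For the local minimum, the only delicate issue is to control the coupling term $\sqrt{\gamma}\int_\Omega uv\,dx$. By Cauchy--Schwarz followed by Poincar\'e's inequality, whose optimal constant is precisely $\lambda_1$ (recall $\lambda_1^{\ast}=\lambda_1^2$), one obtains
\[
\sqrt{\gamma}\int_\Omega uv\,dx \le \frac{\sqrt{\gamma}}{\lambda_1}\|u\|_{H_0^1}\|v\|_{H_0^1}\le \frac{\sqrt{\gamma}}{2\lambda_1}\bigl(\|u\|_{H_0^1}^2+\|v\|_{H_0^1}^2\bigr).
\]
Since the hypothesis $\gamma<\lambda_1^{\ast}$ is equivalent to $\sqrt{\gamma}/\lambda_1<1$, and since the Sobolev inequality \eqref{sobolev} gives $\|u\|_{L^{p+1}}^{p+1}\le C\|u\|_{H_0^1}^{p+1}$, plugging these into $\mathcal{J}_{\gamma}$ yields
\[
\mathcal{J}_{\gamma}(u,v)\ge \tfrac{1}{2}\Bigl(1-\tfrac{\sqrt{\gamma}}{\lambda_1}\Bigr)\bigl(\|u\|_{H_0^1}^2+\|v\|_{H_0^1}^2\bigr)-\tfrac{C}{p+1}\bigl(\|u\|_{H_0^1}^2+\|v\|_{H_0^1}^2\bigr)^{(p+1)/2},
\]
which is strictly positive on a sufficiently small sphere around $(0,0)$ because $p+1>2$.

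For the negative energy point, I would fix any nontrivial $u_0\in H_0^1(\Omega)$ and any $v_0\in H_0^1(\Omega)$ with $\int_\Omega u_0 v_0\,dx\ge 0$ (for instance $v_0\equiv 0$, or $v_0=u_0$), and examine $\mathcal{J}_{\gamma}(tu_0,tv_0)$ as $t\to\infty$. The quadratic terms scale as $O(t^2)$ while the nonlinear term contributes $-\frac{t^{p+1}}{p+1}\|u_0\|_{L^{p+1}}^{p+1}$, so $\mathcal{J}_{\gamma}(tu_0,tv_0)\to-\infty$ and any sufficiently large $t$ supplies the desired $(\hat u,\hat v)$.

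I do not foresee a serious obstacle: the only subtlety is that the quadratic part of $\mathcal{J}_{\gamma}$ must remain coercive in the product variable despite the cross term, and this is guaranteed precisely by the strict inequality $\sqrt{\gamma}<\lambda_1$, i.e.\ $\gamma<\lambda_1^{\ast}$, exactly as in the scalar case of Lemma \ref{lezero}.
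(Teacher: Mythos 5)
Your proposal is correct and takes essentially the same approach as the paper's proof: both control the coupling term $\sqrt{\gamma}\int_\Omega uv\,dx$ via Young/Cauchy--Schwarz plus Poincar\'e to get the factor $1-\sqrt{\gamma}/\lambda_1>0$ in the quadratic part, then use the Sobolev bound on the $L^{p+1}$ term to show positivity on a small sphere, and finally let $t\to\infty$ along a ray to find a point of negative energy. The only cosmetic difference is that the paper normalizes $\|g\|_{L^{p+1}}=1$ and works with the ray $\mathcal{J}_\gamma(tg,th)$, whereas you state the lower bound directly in terms of $\|u\|_{H_0^1}^2+\|v\|_{H_0^1}^2$; the estimates obtained are the same.
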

\begin{proof}
Let us consider, without loss of generality, a pair $(g,h)\in H_0^1(\Omega)\times H_0^1(\Omega)$
such that $\|g\|_{L^{p+1}(\Omega)}=1$. Then, taking a real number $t>0$ and using the Young's
inequality together with the Poincar\'e inequality and the Sobolev inequality \eqref{sobolev},
we find,
\begin{equation}\label{mini}
\begin{split}
\mathcal{J}_{\gamma}(tg,th)=
&\frac{t^2}{2}\int_{\Omega} |\nabla g|^2dx + \frac{t^2}{2} \int_{\Omega} |\nabla h|^2dx
 -t^2\sqrt{\gamma}\int_{\Omega} gh\;dx-\frac{t^{p+1}}{p+1}\\
\geq&\frac{t^2}{2} \left(\|g\|_{H_0^1(\Omega)}^2+\|h\|_{H_0^1(\Omega)}^2
    -\sqrt{\gamma}\int_{\Omega}g^2dx-\sqrt{\gamma}\int_{\Omega}h^2dx\right)-\frac{t^{p+1}}{p+1}\\
\geq&\frac{t^2}{2}\left(1-\frac{\sqrt{\gamma}}{\lambda_1}\right)\left(\|g\|_{H_0^1(\Omega)}^2+\|h\|_{H_0^1(\Omega)}^2\right)
    -\|g\|_{H_0^1(\Omega)}^2\frac{C}{p+1}t^{p+1}\\
\geq&\left(\frac{1}{2}\left(1-\frac{\sqrt{\gamma}}{\lambda_1}\right)t^2-\frac{C}{p+1}t^{p+1}\right)
     \left(\|g\|_{H_0^1(\Omega)}^2+\|h\|_{H_0^1(\Omega)}^2\right),
\end{split}
\end{equation}
where $\lambda_1$ is the first eigenvalue of the Laplace operator under Dirichlet
boundary conditions. Since $0<\gamma<\lambda_1^*=\lambda_1^2$ it follows that $\sqrt{\gamma}<\lambda_1$
and we obtain $\left(1-\frac{\sqrt{\gamma}}{\lambda_1}\right)>0$. Therefore,
taking $t>0$ such that,
\begin{equation*}
0<t^{p-1}<\frac{p+1}{2C}\left(1-\frac{\sqrt{\gamma}}{\lambda_1}\right),
\end{equation*}
from \eqref{mini} we conclude
\begin{equation*}
\mathcal{J}_{\gamma}(tg,th)>0.
\end{equation*}
Thus, the functional $\mathcal{J}_{\gamma}$ has a local minimum at $(u,v)=(0,0)$, i.e.,
\begin{equation*}
\mathcal{J}_{\gamma}(tg,th)>\mathcal{J}_{\gamma}(0,0)=0,
\end{equation*}
for any pair $(g,h)\in H_0^1(\Omega)\times H_0^1(\Omega)$ provided $t>0$ is small enough.
Also, it is clear that, because of the Poincar\'e inequality,
\begin{equation*}
\begin{split}
\mathcal{J}_{\gamma}(tg,th)=
&\frac{t^2}{2}\int_{\Omega}|\nabla g|^2dx + \frac{t^2}{2}\int_{\Omega}|\nabla h|^2dx
 -t^2\sqrt{\gamma}\int_{\Omega}gh\;dx-\frac{t^{p+1}}{p+1}\\
\leq&\frac{t^2}{2} \left(\|g\|_{H_0^1(\Omega)}^2+\|h\|_{H_0^1(\Omega)}^2
    +\sqrt{\gamma}\int_{\Omega}g^2dx+\sqrt{\gamma}\int_{\Omega}h^2dx\right)-\frac{t^{p+1}}{p+1}\\
\leq&\frac{t^2}{2}\left(1+\frac{\sqrt{\gamma}}{\lambda_1}\right)\left(\|g\|_{H_0^1(\Omega)}^2+\|h\|_{H_0^1(\Omega)}^2\right)
    -\frac{t^{p+1}}{p+1}.
\end{split}
\end{equation*}
Then,
\begin{equation*}
\mathcal{J}_{\gamma}(tg,th)\to-\infty,\quad\mbox{as }t\to\infty,
\end{equation*}
and thus, there exists a pair $(\hat u,\hat v)$ such that $\mathcal{J}_{\gamma}(\hat u,\hat v)<0$.
\end{proof}
\begin{lemma}\label{acotacion_sys}
Let $\{(u_n,v_n)\}\subset H_0^1(\Omega)\times H_0^1(\Omega)$ be a PS sequence at level $c$
for the functional $\mathcal{J}_{\gamma}$, i.e.
\begin{equation*}
\mathcal{J}_{\gamma}(u_n,v_n) \rightarrow c,\quad\mathcal{J}_{\gamma}'(u_n,v_n) \rightarrow 0,\quad \hbox{as}\quad n\to \infty.
\end{equation*}
Then,
\begin{equation*}
\{(u_n,v_n)\}\quad \hbox{is bounded in}\quad H_0^1(\Omega)\times H_0^1(\Omega).
\end{equation*}
\end{lemma}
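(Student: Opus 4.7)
The plan is to mirror the proof of Lemma \ref{acotacion_eq}, forming a suitable combination of $\mathcal{J}_\gamma(u_n,v_n)$ and $\langle \mathcal{J}_\gamma'(u_n,v_n),(u_n,v_n)\rangle$ that kills the $L^{p+1}$ contribution (or forces it to carry a favorable sign), and then controlling the resulting cross term $\sqrt{\gamma}\int u_n v_n\,dx$ by means of Young's and Poincar\'e's inequalities. The condition $0<\gamma<\lambda_1^*=\lambda_1^2$ will enter precisely through the strict inequality $\sqrt{\gamma}<\lambda_1$, which has already played the decisive role in Lemma \ref{lezero_sys}.

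First I would write, using the hypothesis $\mathcal{J}_\gamma'(u_n,v_n)\to 0$ in $(H_0^1(\Omega)\times H_0^1(\Omega))'$, that
\begin{equation*}
\|u_n\|_{H_0^1(\Omega)}^2+\|v_n\|_{H_0^1(\Omega)}^2-2\sqrt{\gamma}\int_\Omega u_n v_n\,dx-\int_\Omega |u_n|^{p+1}dx=\|(u_n,v_n)\|\cdot o(1),
\end{equation*}
where $\|(u_n,v_n)\|^2=\|u_n\|_{H_0^1}^2+\|v_n\|_{H_0^1}^2$. Combining this identity with $\mathcal{J}_\gamma(u_n,v_n)=c+o(1)$ in the form $\mathcal{J}_\gamma(u_n,v_n)-\mu\,\langle \mathcal{J}_\gamma'(u_n,v_n)\mid (u_n,v_n)/\|(u_n,v_n)\|\rangle$ and choosing $\mu\in(1/(p+1),1/2)$, the coefficient $\mu-1/(p+1)$ in front of $\int|u_n|^{p+1}dx$ becomes nonnegative and that term can be dropped. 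What remains is an inequality of the form
\begin{equation*}
\left(\tfrac{1}{2}-\mu\right)\bigl(\|u_n\|_{H_0^1}^2+\|v_n\|_{H_0^1}^2\bigr)-(1-2\mu)\sqrt{\gamma}\int_\Omega u_n v_n\,dx\le c+\|(u_n,v_n)\|\cdot o(1).
\end{equation*}

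Next I would estimate the cross term by Young's and Poincar\'e's inequalities,
\begin{equation*}
\Bigl|\int_\Omega u_n v_n\,dx\Bigr|\le \tfrac{1}{2}\bigl(\|u_n\|_{L^2}^2+\|v_n\|_{L^2}^2\bigr)\le\frac{1}{2\lambda_1}\bigl(\|u_n\|_{H_0^1}^2+\|v_n\|_{H_0^1}^2\bigr),
\end{equation*}
which produces the bound
\begin{equation*}
\left(\tfrac{1}{2}-\mu\right)\left(1-\frac{\sqrt{\gamma}}{\lambda_1}\right)\bigl(\|u_n\|_{H_0^1}^2+\|v_n\|_{H_0^1}^2\bigr)\le c+\|(u_n,v_n)\|\cdot o(1).
\end{equation*}
Since $\sqrt{\gamma}<\lambda_1$ and $\mu<1/2$, the prefactor is strictly positive, so the quadratic left-hand side dominates the linear remainder and $\{(u_n,v_n)\}$ is bounded in $H_0^1(\Omega)\times H_0^1(\Omega)$.

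The only genuinely system-specific issue, as opposed to the scalar case of Lemma \ref{acotacion_eq}, is the indefinite cross term $\sqrt{\gamma}\int u_n v_n\,dx$, which has no built-in sign and cannot be absorbed into a nonlocal norm as in \eqref{bieigen}. Handling it by Young plus Poincar\'e, at the cost of the factor $\sqrt{\gamma}/\lambda_1$, is the natural substitute for the inequality \eqref{bieigen} used in the scalar proof; the hypothesis $\gamma<\lambda_1^*$ is then exactly the condition needed to keep the resulting coefficient positive, in complete parallel with the way it appears in the mountain pass geometry established in Lemma \ref{lezero_sys}.
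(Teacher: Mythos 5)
Your argument is correct and follows essentially the same route as the paper: form $\mathcal{J}_\gamma(u_n,v_n)-\mu\langle\mathcal{J}_\gamma'(u_n,v_n)\mid(u_n,v_n)\rangle$ with $\mu\in(1/(p+1),1/2)$ to discard the $L^{p+1}$ term, then absorb the cross term $\sqrt{\gamma}\int u_nv_n$ via Young and Poincar\'e, arriving at the prefactor $(\tfrac12-\mu)(1-\sqrt{\gamma}/\lambda_1)>0$. The only cosmetic difference is that you normalize the derivative term by the full product norm $\|(u_n,v_n)\|$ rather than by $\|u_n\|_{H_0^1}$ alone as written in the paper, which is if anything a cleaner bookkeeping choice.
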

\begin{proof}
Since $\mathcal{J}_{\gamma}'(u_n,v_n) \rightarrow 0$ in $\left(H_0^1(\Omega)\times H_0^1(\Omega)\right)'$, in particular
\begin{equation*}
\left\langle\mathcal{J}_{\gamma}'(u_n,v_n)|\frac{(u_n,v_n)}{\|u_n\|_{H_0^1(\Omega)}+\|v_n\|_{H_0^1(\Omega)}}\right\rangle\to 0.
\end{equation*}
Thus, for any $\varepsilon>0$, there exists a subsequence, denoted again by $\{(u_n,v_n)\}$, such that,
\begin{equation*}
\int_{\Omega} |\nabla u_n|^2dx +\int_{\Omega} |\nabla v_n|^2dx -2\sqrt{\gamma}\int_{\Omega} u_nv_ndx
 -\int_{\Omega} |u_n|^{p+1}dx=\left[\|u_n\|_{H_0^1(\Omega)}+\|v_n\|_{H_0^1(\Omega)}\right]\cdot o(1).
\end{equation*}
Moreover, since $\mathcal{J}_{\gamma}(u_n,v_n) \to c$,
\begin{equation*}
\frac{1}{2} \int_{\Omega} |\nabla u_n|^2dx +\frac{1}{2}\int_{\Omega} |\nabla v_n|^2dx
-\sqrt{\gamma}\int_{\Omega} u_nv_ndx-\frac{1}{p+1} \int_{\Omega} |u_n|^{p+1}dx=c+o(1),
\end{equation*}
for $n>0$ big enough. Therefore, for a positive constant $\mu$ (to be determined below)
we find that
\begin{equation*}
\mathcal{J}_{\gamma}(u_n,v_n)-\mu \left\langle \mathcal{J}_{\gamma}'(u_n,v_n)|\frac{1}{\|u_n\|_{H_0^1(\Omega)}}(u_n,v_n)\right\rangle
 =c+\left[\|u_n\|_{H_0^1(\Omega)}+\|v_n\|_{H_0^1(\Omega)}\right]\cdot o(1).
\end{equation*}
That is,
\begin{align*}
&\left(\frac{1}{2}-\mu\right)\left[\int_{\Omega}|\nabla u_n|^2dx+\int_{\Omega} |\nabla v_n|^2dx\right]
 -(1-2\mu)\sqrt{\gamma}\int_{\Omega} u_nv_ndx-\left(\frac{1}{p+1}-\mu\right) \int_{\Omega}|u_n|^{p+1}dx\\
&=c+\left[\|u_n\|_{H_0^1(\Omega)}+\|v_n\|_{H_0^1(\Omega)}\right]\cdot o(1).
\end{align*}
Hence, taking $\mu$ such that $\frac{1}{p+1}<\mu<\frac{1}{2}$,
\begin{equation*}
\left(\frac{1}{2}-\mu\right)\!\!\left[\int_{\Omega}|\nabla u_n|^2dx\!+\!\!\int_{\Omega} |\nabla v_n|^2dx\right]
-(1-2\mu)\sqrt{\gamma}\!\!\int_{\Omega} u_nv_ndx\leq c+\left[\|u_n\|_{H_0^1(\Omega)}+\|v_n\|_{H_0^1(\Omega)}\right]\cdot o(1),
\end{equation*}
and using Young's inequality,
\begin{align*}
&\left(\frac{1}{2}-\mu\right)\left[\int_{\Omega}|\nabla u_n|^2dx+\int_{\Omega} |\nabla v_n|^2dx
-\sqrt{\gamma} \int_{\Omega} u_n^2dx-\sqrt{\gamma} \int_{\Omega} v_n^2dx\right]\\
&\leq c+\left[\|u_n\|_{H_0^1(\Omega)}+\|v_n\|_{H_0^1(\Omega)}\right]\cdot o(1).
\end{align*}
Then, because of the Poincar\'e inequality, we conclude
\begin{equation}\label{eq_ps1}
\left(\frac{1}{2}-\mu\right)\left(1-\frac{\sqrt{\gamma}}{\lambda_1}\right)\left[ \|u_n\|_{H_0^1(\Omega)}^2 +\|v_n\|_{H_0^1(\Omega)}^2 \right]
   \leq c+\left[\|u_n\|_{H_0^1(\Omega)}+\|v_n\|_{H_0^1(\Omega)}\right]\cdot o(1),
\end{equation}
where $\lambda_1$ is the first eigenvalue of the Laplace operator under Dirichlet
boundary conditions. Since $0<\gamma<\lambda_1^*=\lambda_1^2$, it follows that
\begin{equation*}
\left(\frac{1}{2}-\mu\right)\left(1-\frac{\sqrt{\gamma}}{\lambda_1}\right)>0,
\end{equation*}
and thus, by \eqref{eq_ps1}, we conclude that the sequence  $\{(u_n,v_n)\}$ is bounded
in $H_0^1(\Omega)\times H_0^1(\Omega)$.
\end{proof}

\begin{proof}[Proof of Theorem \ref{Thsystem_subcritical}.]\hfill\break
If $1<p<2^*-1$, given a PS sequence $\{(u_n,v_n)\}\subset H_0^1(\Omega)\times H_0^1(\Omega)$ at level $c$,
by Lemma \ref{lezero_sys}, the functional $\mathcal{J}_{\gamma}$ has the MP geometry.
Moreover, by Lemma \ref{acotacion_sys} and the compact inclusion
\begin{equation*}
H_0^1(\Omega)\times H_0^1(\Omega) \subset\subset L^{p+1}(\Omega)\times L^{p+1}(\Omega),\quad\mbox{for } 2\leq p+1<2^*,
\end{equation*}
provided by Rellich-Kondrachov Theorem, the functional $\mathcal{J}_{\gamma}$ satisfies the PS
condition at any level $c$. Therefore, the hypotheses of the Mountain Pass Theorem are fulfilled and
we conclude that the functional $\mathcal{J}_{\gamma}$ possesses a critical point $(u,v)\in H_0^1(\Omega)\times H_0^1(\Omega)$. Moreover,
if we define the set of the paths
\begin{equation*}
\Gamma:=\left\{g\in C\left([0,1],H_0^1(\Omega)\times H_0^1(\Omega)\right)\,;\, g(0)=(0,0),\; g(1)=(\hat{u},\hat{v})\right\},
\end{equation*}
with $(\hat u,\hat v)$ given as in the proof of Lemma \ref{lezero_sys}, then
\begin{equation*}
\mathcal{J}_{\gamma}(u,v)=c:=\inf_{g\in\Gamma} \max_{\theta \in [0,1]} \mathcal{J}_{\gamma}(g(\theta)).
\end{equation*}
To show the positivity of the pair $(u,v)$ we argue as in the proof of Theorem \ref{Thequation_subcritical}.
Let us consider the functional,
\begin{equation*}
\mathcal{J}_{\gamma}^+(u,v)=\mathcal{J}_\gamma(u^+,v^+),
\end{equation*}
where, as before, $u^+=\max\{u,0\}$. Repeating with minor changes the arguments carried out above for
the functional $\mathcal{J}_{\gamma}$ we conclude that the functional $\mathcal{J}_{\gamma}^+$ has a critical point
$(\tilde{u},\tilde{v})$ such that $\tilde{u}\geq0$ and $\tilde{v}\geq0$. Moreover, by the Maximum
Principle, it follows that $\tilde{u}>0$ and $\tilde{v}>0$, then $(\tilde{u},\tilde{v})$ is a positive solution of \eqref{system}.
\end{proof}
To prove the PS condition when $p+1=2^*$ we must apply once again a concentration-compactness argument.

\begin{lemma}\label{PScondition}
Assume $p=2^*-1$. Then, the functional $\mathcal{J}_{\gamma}$ satisfies the Palais-Smale condition for any level $c$ such that,
\begin{equation*}
c<c^*=\frac{1}{N} S_N^{N/2}.
\end{equation*}
\end{lemma}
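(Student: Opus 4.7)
The plan is to mimic the argument of Lemma \ref{PScondition_eq} in the vector setting, exploiting that the critical term $|u|^{2^*}$ only appears in the first component. Let $\{(u_n,v_n)\}\subset H_0^1(\Omega)\times H_0^1(\Omega)$ be a PS sequence at level $c<c^*$. By Lemma \ref{acotacion_sys} the sequence is bounded, so up to subsequences we may assume $u_n\rightharpoonup u_0$, $v_n\rightharpoonup v_0$ weakly in $H_0^1(\Omega)$, strongly in $L^q(\Omega)$ for every $1\le q<2^*$, and a.e.\ in $\Omega$. Applying the Concentration-Compactness Lemma \ref{CC} to $\{u_n\}$, we obtain an at most countable set $\{x_j\}_{j\in I}$ and nonnegative numbers $\mu_j,\nu_j$ with $\mu_j\ge S_N\nu_j^{2/2^*}$, encoding the possible concentration of $|\nabla u_n|^2$ and $|u_n|^{2^*}$.

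Next I would localize the concentration. For each $j\in I$ and $\varepsilon>0$, take the cut-off $\varphi_{j,\varepsilon}$ as in \eqref{cutoff2} and test $\mathcal{J}_{\gamma}'(u_n,v_n)$ against the pair $(\varphi_{j,\varepsilon}u_n,0)$. This produces
\begin{equation*}
\langle \mathcal{J}_{\gamma}'(u_n,v_n)\,|\,(\varphi_{j,\varepsilon}u_n,0)\rangle
=\int_{\Omega}\varphi_{j,\varepsilon}|\nabla u_n|^2dx+\int_{\Omega}u_n\nabla u_n\!\cdot\!\nabla\varphi_{j,\varepsilon}\,dx
-\sqrt{\gamma}\int_{\Omega}\varphi_{j,\varepsilon}u_nv_n\,dx-\int_{\Omega}\varphi_{j,\varepsilon}|u_n|^{2^*}dx.
\end{equation*}
Since $v_n\to v_0$ in $L^2(\Omega)$ and $u_n\to u_0$ in $L^2(\Omega)$, the coupling term $\sqrt{\gamma}\int\varphi_{j,\varepsilon}u_nv_n\,dx$ converges to $\sqrt{\gamma}\int\varphi_{j,\varepsilon}u_0v_0\,dx$, which tends to $0$ as $\varepsilon\to 0$; the cross term $\int u_n\nabla u_n\!\cdot\!\nabla\varphi_{j,\varepsilon}$ is treated exactly as in the scalar case and also vanishes in the $\varepsilon\to 0$ limit. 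Passing to the limit first in $n$ (using $\mathcal{J}_{\gamma}'(u_n,v_n)\to 0$) and then in $\varepsilon$, we deduce $\mu_j=\nu_j$, and by \eqref{sobolev2} either $\nu_j=0$ or $\nu_j\ge S_N^{N/2}$.

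To rule out the concentration alternative under the hypothesis $c<c^*$, I would compute
\begin{equation*}
c=\lim_{n\to\infty}\left[\mathcal{J}_{\gamma}(u_n,v_n)-\tfrac{1}{2}\langle\mathcal{J}_{\gamma}'(u_n,v_n)\,|\,(u_n,v_n)\rangle\right]
=\left(\tfrac{1}{2}-\tfrac{1}{2^*}\right)\lim_{n\to\infty}\int_{\Omega}|u_n|^{2^*}dx,
\end{equation*}
where all quadratic terms cancel identically by the homogeneity of the derivative pairing. If any $\nu_{j_0}>0$ occurred, then
\begin{equation*}
c\ge\left(\tfrac{1}{2}-\tfrac{1}{2^*}\right)\nu_{j_0}\ge\tfrac{1}{N}S_N^{N/2}=c^*,
\end{equation*}
contradicting the assumption. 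Hence $\nu_j=0$ for every $j\in I$, which yields $u_n\to u_0$ strongly in $L^{2^*}(\Omega)$. Finally, to upgrade to strong convergence in $H_0^1(\Omega)\times H_0^1(\Omega)$, I would test $\mathcal{J}_{\gamma}'(u_n,v_n)$ against $(u_n-u_0,0)$ and $(0,v_n-v_0)$: the first gives $\|\nabla(u_n-u_0)\|_2\to 0$ using the strong $L^{2^*}$ convergence just proved together with $L^2$ compactness for the $\sqrt{\gamma}v_n$ term, and the second gives $\|\nabla(v_n-v_0)\|_2\to 0$ using only $L^2$ compactness of $u_n$. The main technical point, and the only place where the vectorial nature of the problem could cause trouble, is precisely the handling of the off-diagonal coupling $\sqrt{\gamma}\int uv\,dx$ at the concentration points; but because this term is $L^2$-controlled while the concentration lives at the $L^{2^*}$ scale, it is harmless in the cut-off limit, and the argument closes in direct parallel with Lemma \ref{PScondition_eq}.
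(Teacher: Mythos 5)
Your proof is correct and follows essentially the same concentration-compactness strategy as the paper: bound the PS sequence, pass to the weak limit, apply Lions' lemma, localize with a cut-off, and rule out concentration via the energy identity $c=(\tfrac12-\tfrac1{2^*})\lim\int|u_n|^{2^*}$. The only (harmless, arguably cleaner) deviation is that you test against $(\varphi_{j,\varepsilon}u_n,0)$ rather than $(\varphi_{j,\varepsilon}u_n,\varphi_{j,\varepsilon}v_n)$, obtaining $\mu_j=\nu_j$ directly instead of the paper's $\nu_j=\mu_j+\tilde\mu_j$, and you also spell out the final upgrade to strong $H_0^1\times H_0^1$ convergence, which the paper leaves implicit.
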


\begin{proof}
Let $\{(u_n,v_n)\}\subset H_0^1(\Omega)\times H_0^1(\Omega)$ be a PS sequence of level
$c<c^*$ for the functional $\mathcal{J}_{\gamma}$. Thanks to Lemma \ref{acotacion_sys},
the sequence $\{(u_n,v_n)\}$ is uniformly bounded and, as a consequence, we can
assume that there exists a subsequence still denoted by $\{(u_n,v_n)\}$, such that,
\begin{align}\label{conv:PS_system}
(u_n,v_n) \rightharpoonup (u_0,v_0)& \quad \hbox{weakly in}\quad H_0^1(\Omega)\times H_0^1(\Omega),\notag\\
(u_n,v_n) \to (u_0,v_0)&\quad \hbox{strongly in}\quad L^q(\Omega)\times L^q(\Omega), 1\leq q<2^*,\\
(u_n,v_n) \to (u_0,v_0)&\quad \hbox{a.e. in}\quad\Omega.\notag
\end{align}
Moreover, we can assume that, up to a subsequence, there exist three measures $\mu$, $\tilde\mu$ and
$\nu$ such that $|\nabla u_n|^2$, $|\nabla v_n|^2$ and $|u_n|^{2^*}$,
converge in the sense of the measures $\mu$, $\tilde\mu$ and $\nu$ respectively. Thus,
because of Lemma \ref{CC}, there is a countable set $I$ of points $\{x_j\}_{j\in I}\subset \overline{\Omega}$,
and some positive numbers $\mu_j$, $\tilde\mu_j$ and $\nu_j$ such that
\begin{equation}\label{licom}
\begin{split}
& |\nabla u_n|^2 \rightharpoonup d\mu =|\nabla u_0|^2+ \sum_{j\in I} \mu_j \delta_{x_j},\\ &
|\nabla v_n|^2  \rightharpoonup d\tilde\mu =|\nabla v_0|^2+ \sum_{j\in I} \tilde\mu_j \delta_{x_j},\\ &
|u_n|^{2^*} \rightharpoonup  d\nu =|u_0|^{2^*}+ \sum_{j\in I} \nu_j \delta_{x_j},
\end{split}
\end{equation}
where $\delta_{x_j}$ is the Dirac's delta centered at $x_j$ with $j\in I$ and satisfying
\begin{equation}\label{mescon}
\mu_j\geq S_N \nu_j^{2/2^*}.
\end{equation}
Next, for $j\in I$, let $\varphi_{j,\varepsilon}\in C_0^\infty(\Omega)$ be a cut-off function
satisfying \eqref{cutoff2} centered at $x_j\in\overline{\Omega}$. Thus, using
$(\varphi_{j,\varepsilon} u_n,\varphi_{j,\varepsilon} v_n)$ as a test function, we find,
\begin{align*}
\langle \mathcal{J}_{\gamma}'(u_n,v_n)|(\varphi_{j,\varepsilon} u_n,\varphi_{j,\varepsilon} v_n)\rangle\!=
 &\!\!\int_{\Omega}\nabla u_n \cdot \nabla (\varphi_{j,\varepsilon} u_n)dx\!
  +\!\int_{\Omega} \nabla v_n\cdot\nabla(\varphi_{j,\varepsilon} v_n)dx
    -2\sqrt{\gamma}\!\int_{\Omega} \varphi_{j,\varepsilon} u_n v_ndx\\
-&\int_{\Omega} \varphi_{j,\varepsilon} u_n^{2^*}dx\\
=&\int_{\Omega} \varphi_{j,\varepsilon} |\nabla u_n|^2dx
 +\int_{\Omega} \varphi_{j,\varepsilon} |\nabla v_n|^2dx
 -\int_{\Omega} \varphi_{j,\varepsilon} u_n^{2^*}dx\\
+&\!\!\int_{\Omega}\! u_n \langle \nabla u_n,\nabla \varphi_{j,\varepsilon}\rangle dx
  +\!\!\int_{\Omega}\! v_n \langle \nabla v_n,\nabla \varphi_{j,\varepsilon}\rangle dx
  -2\sqrt{\gamma}\!\!\int_{\Omega}\! \varphi_{j,\varepsilon} u_n v_n dx.
\end{align*}
Moreover, due to \eqref{conv:PS_system} and \eqref{licom},
\begin{align*}
\lim_{n\to \infty}\langle \mathcal{J}_{\gamma}'(u_n,v_n)|(\varphi_{j,\varepsilon} u_n,\varphi_{j,\varepsilon} v_n)\rangle
=&\int_{\Omega} \varphi_{j,\varepsilon} d\mu+\int_{\Omega} \varphi_{j,\varepsilon} d\tilde\mu
  -\int_{\Omega}\varphi_{j,\varepsilon} d\nu\\
-2\sqrt{\gamma}&\int_{\Omega}\!\! \varphi_{j,\varepsilon} u_0 v_0dx
  +\!\!\int_{\Omega}\!\! u_0 \left\langle \nabla u_0,\nabla \varphi_{j,\varepsilon}\right\rangle dx
    +\!\!\int_{\Omega}\!\! v_0 \left\langle \nabla v_0,\nabla \varphi_{j,\varepsilon}\right\rangle dx.
\end{align*}
By construction,
\begin{equation*}
\lim_{\varepsilon \to 0}\left[-2\sqrt{\gamma}\int_{\Omega} \varphi_{j,\varepsilon} u_0 v_0dx
  +\int_{\Omega} u_0 \left\langle \nabla u_0,\nabla \varphi_{j,\varepsilon}\right\rangle dx
    +\int_{\Omega} v_0 \left\langle \nabla v_0,\nabla \varphi_{j,\varepsilon}\right\rangle dx \right]=0.
\end{equation*}
Then, as $\mathcal{J}_{\gamma}'(u_n)\to0$ in $\left(H_0^1(\Omega)\times H_0^1(\Omega)\right)'$, we obtain that,
\begin{equation*}
\lim_{\varepsilon \to 0}  \left(\int_{\Omega}\varphi_{j,\varepsilon} d\mu
                                +\int_{\Omega}\varphi_{j,\varepsilon} d\tilde\mu
                                                                -\int_{\Omega}\varphi_{j,\varepsilon} d\nu\right)
                                            =\mu_j+\tilde\mu_j-\nu_j=0,
\end{equation*}
and we conclude
\begin{equation}\label{cons:concentration_system}
\nu_j= \mu_j+\tilde\mu_j.
\end{equation}
Finally, we have two options either the PS sequence has a convergent subsequence or it concentrates
around some of the points $x_j$. In other words, $\nu_j=\mu_j=\tilde\mu_j=0$, or there exists some $\nu_j>0$
such that, by \eqref{mescon} and \eqref{cons:concentration_system}, $\nu_j \geq S_N^{N/2}$. In case of having concentration,
we find that
\begin{align*}
  c=&\lim_{n \to \infty} \mathcal{J}_{\gamma}(u_n,v_n)
       =\lim_{n \to \infty} \mathcal{J}_{\gamma}(u_n,v_n)-\frac{1}{2}\langle\mathcal{J}_{\gamma}(u_n,v_n)| (u_n,v_n)\rangle\\
   =&\left(\frac{1}{2}-\frac{1}{2^*}\right)\int_{\Omega}|u_0|^{2^*}dx+\left(\frac{1}{2}-\frac{1}{2^*}\right)\nu_j\\
\geq&\frac{1}{N}S_N^{N/2}=c^*,
\end{align*}
in contradiction with the hypotheses $c<c^*$. Therefore, the PS sequence has a convergent
subsequence and the PS condition is satisfied.
\end{proof}
Next we show that we can obtain a path for $\mathcal{J}_{\gamma}$
under the critical level $c^*$. To obtain such path we will assume test functions
of the form
\begin{equation*}
(\tilde u_{\varepsilon},\tilde v_{\varepsilon})=(M\phi_{\varepsilon},M\rho \phi_{\varepsilon}),
\end{equation*}
where
\begin{equation*}
\phi_{\varepsilon}=\varphi_{j,R} \; u_{j,\varepsilon},
\end{equation*}
with $\varphi_{j,R}$ is a cut-off function defined by \eqref{cutoff2}, for some $R>0$ small enough,
$M>0$ a sufficiently large constant such that $\mathcal{J}_{\gamma} (\tilde{u}_{\varepsilon},\tilde{v}_{\varepsilon})<0$,
$\rho$ is a positive term to be determined below and $u_{j,\varepsilon}$ are the family of functions
defined by \eqref{extrem}. For the sake of simplicity, in the sequel we will consider $x_j=0$ as well as
the normalization \eqref{norms2}.\newline
Then, under the previous construction, we define the set of paths
\begin{equation*}
\Gamma_{\varepsilon}:=\left\{g\in C\left([0,1],H_0^1(\Omega)\times H_0^1(\Omega)\right)\,;\, g(0)=(0,0),\; g(1)=(\tilde u_{\varepsilon},\tilde v_{\varepsilon})\right\},
\end{equation*}
and consider the minimax value
\begin{equation*}
c_{\varepsilon}=\inf_{g\in\Gamma_{\varepsilon}} \max_{t \in [0,1]} \mathcal{J}_{\gamma}(g(t)).
\end{equation*}
Now we prove that, in fact, the levels $c_{\varepsilon}$ are always below $c^*$ for $\varepsilon>0$ small enough.

\begin{lemma}\label{level}
Assume $p=2^*-1$. Then, there exists $\varepsilon>0$ small enough such that,
\begin{equation*}
\sup_{0\leq t\leq1} \mathcal{J}_{\gamma}(t\tilde u_{\varepsilon},t\tilde v_{\varepsilon})< \frac{1}{N} S_N^{N/2},
\end{equation*}
provided $N\geq7$.
\end{lemma}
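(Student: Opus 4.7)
The plan is to parametrize the path by $t \mapsto (t\tilde u_\varepsilon, t\tilde v_\varepsilon) = (tM\phi_\varepsilon, tM\rho\phi_\varepsilon)$ and compute $\mathcal{J}_\gamma(t\tilde u_\varepsilon, t\tilde v_\varepsilon)$ explicitly. Using the normalization $\|\phi_\varepsilon\|_{L^{2^*}(\Omega)}=1$ from \eqref{norms2} and the substitution $s = tM$, one gets
\[
\mathcal{J}_\gamma(t\tilde u_\varepsilon, t\tilde v_\varepsilon) = \frac{a(\varepsilon,\rho)}{2}\,s^2 - \frac{s^{2^*}}{2^*},
\]
where
\[
a(\varepsilon,\rho) := (1+\rho^2)\,\|\nabla \phi_\varepsilon\|_{L^2(\Omega)}^2 - 2\rho\sqrt{\gamma}\,\|\phi_\varepsilon\|_{L^2(\Omega)}^2.
\]
Since $M$ is chosen so that $\mathcal{J}_\gamma(\tilde u_\varepsilon,\tilde v_\varepsilon)<0$, the supremum over $t\in[0,1]$ coincides with the unconstrained maximum in $s\ge 0$, and a direct computation (as in the proof of Lemma \ref{lemma_inverse}) yields
\[
\sup_{0\le t\le 1}\mathcal{J}_\gamma(t\tilde u_\varepsilon, t\tilde v_\varepsilon) = \frac{1}{N}\,a(\varepsilon,\rho)^{N/2}.
\]

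Thus the lemma reduces to the inequality $a(\varepsilon,\rho)<S_N$. Inserting the asymptotics from Lemma \ref{lees}, namely $\|\nabla \phi_\varepsilon\|_{L^2}^2 = S_N + O(\varepsilon^{N-2})$ and, since $N\ge 7\ge 5$, $\|\phi_\varepsilon\|_{L^2}^2 = C\varepsilon^2 + O(\varepsilon^{N-2})$, the condition becomes
\[
\rho^2 S_N - 2\rho\sqrt{\gamma}\,C\,\varepsilon^2 + (1+\rho^2)\,O(\varepsilon^{N-2}) < 0.
\]

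At this point I would exploit the freedom in the still undetermined coupling parameter by setting $\rho := \kappa\varepsilon^2$ with $\kappa>0$ to be fixed. The previous inequality then reads
\[
\bigl(\kappa^2 S_N - 2\kappa\sqrt{\gamma}\,C\bigr)\,\varepsilon^4 + O(\varepsilon^{N-2}) < 0,
\]
so any choice $\kappa\in\bigl(0,\,2\sqrt{\gamma}\,C/S_N\bigr)$ makes the coefficient of $\varepsilon^4$ strictly negative.

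The main obstacle, and the source of the dimensional restriction, is the competition between this $\varepsilon^4$-gain coming from the off-diagonal term $-\sqrt{\gamma}\int uv\,dx$ and the $O(\varepsilon^{N-2})$ remainder from the Sobolev estimate \eqref{esn2}. The gain dominates precisely when $N-2>4$, i.e.\ when $N\ge 7$, in which case $O(\varepsilon^{N-2})=o(\varepsilon^4)$ is absorbed for $\varepsilon$ sufficiently small and the claim follows. The borderline case $N=6$, where both terms are of order $\varepsilon^4$ with no sign control on the error, cannot be treated by this argument, in complete parallel with the obstruction observed in the remark following Lemma \ref{lemma_inverse}.
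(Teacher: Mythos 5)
Your proof is correct and follows essentially the same route as the paper: parametrize the path, reduce the supremum to the condition that the ``effective Sobolev constant''
\[
a(\varepsilon,\rho)=(1+\rho^2)\|\nabla\phi_\varepsilon\|_2^2-2\rho\sqrt{\gamma}\|\phi_\varepsilon\|_2^2
\]
lies strictly below $S_N$, then choose $\rho$ as a power of $\varepsilon$ and compare orders. The one genuine (small) difference is in how the coupling exponent is chosen. The paper sets $\rho=\varepsilon^\alpha$ and demands \emph{strict} order domination, i.e.\ $2\alpha>\alpha+2$ and $N-2>\alpha+2$, which forces $\alpha>2$ and hence $N>6$. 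You instead take the borderline exponent $\alpha=2$, writing $\rho=\kappa\varepsilon^2$, so that the quadratic-in-$\rho$ loss $\rho^2 S_N$ and the linear-in-$\rho$ gain $-2\rho\sqrt{\gamma}C\varepsilon^2$ appear at the \emph{same} order $\varepsilon^4$, and then you balance their coefficients by choosing $\kappa\in(0,2\sqrt{\gamma}C/S_N)$. This coefficient-level balancing is a mild refinement that the paper's inequality $\tau>2+\alpha$ would exclude; it isolates more transparently that the sole remaining obstruction is $O(\varepsilon^{N-2})=o(\varepsilon^4)$, i.e.\ $N\geq 7$, and it makes the failure at $N=6$ (both terms of order $\varepsilon^4$, uncontrolled error) immediate. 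Both routes are valid and give identical conclusions; yours is marginally sharper in spirit, the paper's marginally simpler to state.
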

\begin{proof}
Let us denote by $F(\varepsilon)$ the estimate \eqref{esl2} in Lemma \ref{lees}. Then, assuming
the normalization \eqref{norms2},
\begin{align*}
g(t):=\mathcal{J}_{\gamma}(t\tilde u_{\varepsilon},t\tilde v_{\varepsilon})
&=\left(\frac{t^2M^2}{2}+\frac{\rho^2t^2M^2}{2} \right) \|\nabla \phi_{\varepsilon}\|_{L^2(\Omega)}^2
  -t^2 M^2\rho \sqrt{\gamma}\int_{\Omega} \phi_{\varepsilon}^2dx
  -\frac{t^{2^*}M^{2^*}}{2^*}\\
&=\frac{t^2M^2}{2}\left(\left(1+\rho^2\right) [S_N+ O(\varepsilon^{N-2})]
  -2\rho\sqrt{\gamma} F(\varepsilon)\right)-\frac{t^{2^*}M^{2^*}}{2^*}.
\end{align*}
It is clear that $\displaystyle \lim_{t\to \infty} g(t)=-\infty$, therefore, the function $g(t)$ possesses
a maximum value at the point,
\begin{equation*}
t_{\varepsilon}=\left(\frac{M^2\left[\left(1+\rho^2\right)[S_N+O(\varepsilon^{N-2})]
                -2\rho\sqrt{\gamma}F(\varepsilon)\right]}{M^{2^*}}\right)^{\frac{1}{2^*-2}}.
\end{equation*}
Moreover, at this point $t_{\varepsilon}$,
\begin{equation*}
g(t_{\varepsilon})=\frac{1}{N}\left[\left(1+\rho^2\right)[S_N+O(\varepsilon^{N-2})]
                   -2\rho\sqrt{\gamma}F(\varepsilon)\right]^{N/2}.
\end{equation*}
Then, the proof will be completed if we can choose $\rho>0$ such that the inequality,
\begin{equation}\label{csi}
\left[\left(1+\rho^2\right)[S_N+O(\varepsilon^{N-2})]-2\rho\sqrt{\gamma} F(\varepsilon)\right]<S_N,
\end{equation}
holds true provided $\varepsilon>0$ is small enough. Indeed, if we take $\rho={\varepsilon}^{\alpha}$,
with $\alpha>0$ (to be determined), inequality \eqref{csi} is equivalent to
\begin{equation*}
S_N\varepsilon^{2\alpha}+O(\varepsilon^{N-2+2\alpha})+O(\varepsilon^{N-2})
   <2\sqrt{\gamma}\varepsilon^{\alpha}F(\varepsilon),
\end{equation*}
Since $S_N\varepsilon^{2\alpha}+O(\varepsilon^{N-2+2\alpha})+O(\varepsilon^{N-2})=O(\varepsilon^{\tau})$
with $\tau=\min\{2\alpha, N-2+2\alpha, N-2\}=\min\{2\alpha, N-2\}$, we are left to prove that we
can choose $\alpha>0$ such that,
\begin{equation}\label{csi2}
O(\varepsilon^{\tau})<2\sqrt{\gamma}\varepsilon^{\alpha}
\cdot\left\{\begin{array}{ll}
C\varepsilon+O(\varepsilon^2), & \hbox{if}\quad N=3,\\
\frac{C\varepsilon^2}{2} |\log \varepsilon|+O(\varepsilon^2), & \hbox{if}\quad N=4,\\
C\varepsilon^2+O(\varepsilon^{N-2}), & \hbox{if}\quad N\geq 5.
\end{array}\right.
\end{equation}
provided $\varepsilon>0$ is small enough.
\begin{itemize}
\item If $N=3$, the corresponding inequality in \eqref{csi2} holds true if $\tau=\min\{2\alpha,1\}>\alpha+1$
that is not possible.
\item If $N=4$, the corresponding inequality \eqref{csi2} holds true if
\begin{equation*}
O(\varepsilon^{\tau})
<C\sqrt{\gamma}\varepsilon^{2\alpha+2}|\log\varepsilon|\quad\Rightarrow\quad O(\varepsilon^{\tau-2-\alpha})
<C\sqrt{\gamma}|\log\varepsilon|,
\end{equation*}
and thus, necessarily $\tau=\min\{2\alpha,2\}>2+\alpha$, that, once again, is not possible.
\item If $N\geq5$, the corresponding inequality \eqref{csi2} holds true if $\tau=\min\{2\alpha,N-2\}>2+\alpha$.
Let us observe that $\displaystyle \min\{a,b\}=\frac{1}{2}\left(a+b-|a-b|\right)$, hence,
inequality \eqref{csi2} will be satisfied if we can choose $\alpha>0$ such that
\begin{equation}\label{lst}
N-|2\alpha-(N-2)|>6.
\end{equation}
Now we have two options, either $2\alpha>N-2$ or $2\alpha<N-2$.
\begin{itemize}
\item In the first case, thanks to inequality \eqref{lst}, we find the condition
$\frac{N}{2}+1>N-\alpha>4$, that can be fulfilled only for $N>6$.
\item In the second case, thanks to inequality \eqref{lst}, we find the condition
$N-2>2\alpha>4$, that can be fulfilled, once again, only for $N>6$.
\end{itemize}
\end{itemize}
Thus, if $N\geq 7$ we can choose $\alpha>2$ such that \eqref{csi2} is satisfied. Finally,
note that with the assumption $\rho={\varepsilon}^{\alpha}$ we have
\begin{equation*}
t_{\varepsilon}=\left(\frac{M^2\left[\left(1+\rho^2\right)[S_N+O(\varepsilon^{N-2})]
                      -2\rho\sqrt{\gamma}F(\varepsilon)\right]}{M^{2^*}}\right)^{\frac{1}{2^*-2}}
                      \geq\delta>0,
\end{equation*}
provided $\varepsilon>0$ is small enough.
\end{proof}

\begin{proof}{Proof of Theorem \ref{Thsystem}. Critical case.}\hfill\break
Thanks to Lemma \ref{lezero_sys} and Lemma \ref{level},we find that
\begin{equation*}
0<c_{\varepsilon}\leq \sup_{0\leq t\leq1} \mathcal{J}_{\gamma}(t\tilde u_{\varepsilon},t\tilde v_{\varepsilon})
               < \frac{1}{N} S_N^{N/2},
\end{equation*}
provided $\varepsilon>0$ is small enough. Because of Lemma \ref{lezero_sys} the functional
$\mathcal{J}_{\gamma}$ has the MPT geometry. Moreover, because of Lemma \ref{PScondition} the
functional $\mathcal{J}_{\gamma}$ satisfies the PS condition for any level $c_{\varepsilon}$ with
$\varepsilon>0$ small enough. Therefore, we can apply the Mountain Pass Theorem and conclude
the existence of a critical point $(u,v)\in H_0^1(\Omega)\times H_0^1(\Omega)$. The rest
follows as in the subcritical case.
\end{proof}

%%%%%%%%%%%%%%%%%%%%%%%%%%%%%%%%%%%%%%%%%%%%%%%
\section{Further Extensions}\label{furext}
%%%%%%%%%%%%%%%%%%%%%%%%%%%%%%%%%%%%%%%%%%%%
Let us consider the following high-order problem with generalized Navier boundary conditions,
\begin{equation}\label{extension}
\left\{
\begin{array}{rllll}
(-\Delta)^{m+1} u\!\!\!\! &=\gamma u+ (-\Delta)^{m}|u|^{p-1}u& & &\mbox{in}\quad \Omega\subset \mathbb{R}^{N}, \\
(-\Delta)^ju \!\!\!\!     &=0 &\mbox{ for}\ 0\leq j\leq m,     & &\mbox{on}\quad \partial\Omega,
\end{array}
\right.
\tag{$P_{\gamma}^{m+1}$}
\end{equation}
with $m$ a natural number bigger than 1, and the variational problem obtained
applying the operator $(-\Delta)^{-m}$ to \eqref{extension},
\begin{equation}\label{extension2}
\left\{
\begin{tabular}{lcl}
$-\Delta u=\gamma (-\Delta)^{-m}u+ |u|^{p-1}u$ & &in $\Omega\subset \mathbb{R}^{N}$, \\
\quad\ \ $u=0$                                        & &on $\partial\Omega$.
\end{tabular}
\right.
\tag{$E_{\gamma,m}$}
\end{equation}
associated with the following Euler-Lagrange functional,
\begin{equation*}
\mathcal{F}_{\gamma,m}(u)=\frac{1}{2} \int_{\Omega} |\nabla u|^2dx
                     -\frac{\gamma}{2}\int_{\Omega} |(-\Delta)^{-m/2} u|^2 dx
                                         -\frac{1}{p+1} \int_{\Omega}|u|^{p+1}dx.
\end{equation*}
Note that, as it happens for $m=1$, the embedding features for problem
\eqref{extension2} are governed by the standard second-order equation,
\begin{equation*}
-\Delta u=|u|^{p-1}u,
\end{equation*}
thus, the variational framework coincides with the one of the case
$m=1$, so that we also consider $1<p\leq 2^*-1$.\newline Let us
observe that if we try to prove the existence of a positive solution
to problem \eqref{extension2} directly as performed for the problem
\eqref{eq:nonlocal} in Section \eqref{concomp}, we immediately run
into complications.

Due to the lack of a comparison principle, we can not use a similar
argument to Lemma \eqref{lem:estimacion_nolocal} when dealing with
the operator $(-\Delta)^{-m}$. Thus, we will make full use of the
correspondence between problem \eqref{extension2} and the following
elliptic system,
\begin{equation}\label{extension_system}
\left\{
\begin{tabular}{l}
$-\Delta u=\gamma^{\frac{1}{m+1}}v_1+|u|^{p-1}u,$ \\
$-\Delta v_1=\gamma^{\frac{1}{m+1}}v_2,$\\
$-\Delta v_2=\gamma^{\frac{1}{m+1}}v_3,$\\
$\,\,\,\,\,\, \vdots$\\
$-\Delta v_{m}=\gamma^{\frac{1}{m+1}}u$\\
\end{tabular}
\right.
\tag{$S_{\gamma,m}$}
\quad \hbox{in}\quad \Omega,\quad (u,v_1,\ldots,v_m)=(0,0,\ldots,0)\quad\mbox{in } \partial\Omega,
\end{equation}
whose associated Euler-Lagrange functional is defined by
\begin{align}\label{functional_extension}
\mathcal{J}_{\gamma,m}\left(\mathcal{U}\right)
&=\frac{1}{2}\int_{\Omega}|\nabla u|^2dx+\frac{1}{2}\sum_{i=1}^m\int_{\Omega}|\nabla v_i|^2dx\nonumber\\
& -\frac{\gamma^{\frac{1}{m+1}}}{m+1}\left(\int_{\Omega}uv_1dx+\int_{\Omega}uv_mdx+\sum_{i=1}^{m-1}\int_{\Omega}v_iv_{i+1}dx\right)
  -\frac{1}{p+1}\int_{\Omega}|u|^{p+1}dx,
\end{align}
where $\mathcal{U}=(u,v_1,\ldots,v_m)$. The functional $\mathcal{J}_{\gamma,m}$  has the same structure as
the functional $\mathcal{J}_{\gamma}$ thus, the ideas developed in
Section \ref{concomp2} will fit, with slight variations, in this scenario.\newline
Let us denote by $\Lambda_1^*$ the first eigenvalue of the operator $(-\Delta)^{m+1}$
under the homogeneous generalized Navier boundary conditions given by \eqref{extension}.
It is clear from the spectral definition of the operator $(-\Delta)^{m+1}$ that
$\Lambda_1^*=\lambda_1^{m+1}$ with $\lambda_1$ the first eigenvalue of the Laplace operator
under homogeneous Dirichlet boundary conditions.\newline
The aim of this last section is then to prove the following.
\begin{theorem}\label{Thsystem_m_subcritical}
Assume $1<p<2^*-1$. Then, for every $\gamma\in (0,\Lambda_1^*)$,
there exists a positive solution to system \eqref{extension_system}.
\end{theorem}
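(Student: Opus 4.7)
The plan is to mirror the strategy that worked for the $m=1$ system \eqref{system} in Section \ref{concomp2}, since the functional $\mathcal{J}_{\gamma,m}$ has the same variational structure as $\mathcal{J}_\gamma$. We apply the Mountain Pass Theorem to $\mathcal{J}_{\gamma,m}$ on the product space $\mathcal{H}:=H_0^1(\Omega)^{m+1}$, and then use the Maximum Principle together with the positive part truncation to obtain positivity of each component.

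First I would establish the MP geometry analogous to Lemma \ref{lezero_sys}. Fix $\mathcal{G}=(g,h_1,\dots,h_m)\in \mathcal{H}$ with $\|g\|_{L^{p+1}(\Omega)}=1$. The cross terms $\int uv_1$, $\int uv_m$, and $\int v_iv_{i+1}$ are controlled by Young's inequality, so that each component appears exactly twice. Combined with the Poincar\'e inequality $\|w\|_{L^2}^2\leq \lambda_1^{-1}\|w\|_{H_0^1}^2$, one obtains
\begin{equation*}
\mathcal{J}_{\gamma,m}(t\mathcal{G})\geq \frac{t^2}{2}\left(1-\frac{\gamma^{1/(m+1)}}{\lambda_1}\right)\left(\|g\|_{H_0^1}^2+\sum_{i=1}^m\|h_i\|_{H_0^1}^2\right)-\frac{C\,t^{p+1}}{p+1}\|g\|_{H_0^1}^2.
\end{equation*}
Since $\gamma<\Lambda_1^*=\lambda_1^{m+1}$ gives $\gamma^{1/(m+1)}<\lambda_1$, the quadratic term dominates for small $t>0$, yielding a local minimum at the origin. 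The existence of $\hat{\mathcal{U}}\in\mathcal{H}$ with $\mathcal{J}_{\gamma,m}(\hat{\mathcal{U}})<0$ follows from the same scaling argument as in Lemma \ref{lezero_sys}, since the $L^{p+1}$ term dominates as $t\to\infty$.

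Next I would prove that any Palais--Smale sequence $\{\mathcal{U}_n\}\subset\mathcal{H}$ at level $c$ is bounded, in the spirit of Lemma \ref{acotacion_sys}. Picking $\mu\in\left(\tfrac{1}{p+1},\tfrac12\right)$ and forming the combination $\mathcal{J}_{\gamma,m}(\mathcal{U}_n)-\mu\langle\mathcal{J}_{\gamma,m}'(\mathcal{U}_n),\mathcal{U}_n\rangle/\|\mathcal{U}_n\|_{\mathcal{H}}$, the $L^{p+1}$ term has the right sign, and the cross terms are once again handled by Young's inequality plus Poincar\'e, producing a bound of the form
\begin{equation*}
\left(\tfrac12-\mu\right)\left(1-\tfrac{\gamma^{1/(m+1)}}{\lambda_1}\right)\|\mathcal{U}_n\|_{\mathcal{H}}^2\leq c+\|\mathcal{U}_n\|_{\mathcal{H}}\cdot o(1).
\end{equation*}
The factor on the left is strictly positive under our hypothesis, giving boundedness.

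With boundedness in hand, and since $1<p<2^*-1$, the compact embedding $H_0^1(\Omega)^{m+1}\subset\subset L^{p+1}(\Omega)^{m+1}$ provided by Rellich--Kondrachov, together with the compactness of $(-\Delta)^{-1}:L^2\to L^2$ which controls the cross terms $\int v_iv_{i+1}$ along a weakly convergent subsequence, immediately yields the PS condition at every level $c$. The hypotheses of the Mountain Pass Theorem are therefore fulfilled and produce a critical point $\mathcal{U}\in\mathcal{H}$ of $\mathcal{J}_{\gamma,m}$, which is a weak solution to \eqref{extension_system}. Positivity is obtained exactly as in the proof of Theorem \ref{Thsystem_subcritical}: one repeats the argument for the truncated functional $\mathcal{J}_{\gamma,m}^+(\mathcal{U})=\mathcal{J}_{\gamma,m}(u^+,v_1^+,\dots,v_m^+)$, produces a nonnegative critical point, and then the Maximum Principle applied iteratively from the bottom of the system upward (using $-\Delta v_m=\gamma^{1/(m+1)}u\geq 0$ to get $v_m>0$, then $v_{m-1}>0$, etc., and finally $u>0$) yields strict positivity of all components. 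The main technical subtlety, and the step I would watch most carefully, is ensuring that the chain of cross terms in the functional admits a uniform Young/Poincar\'e bound with the correct combinatorial constant --- counting that every component appears in exactly two products is what makes the constraint $\gamma<\lambda_1^{m+1}$ precisely sharp for the MP geometry.
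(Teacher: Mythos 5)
Your proposal follows the paper's proof step for step: Mountain Pass geometry for $\mathcal{J}_{\gamma,m}$ (Lemma~\ref{lezero_sys_m}), boundedness of Palais--Smale sequences (Lemma~\ref{acotacion_sys_m}), Rellich--Kondrachov to close the PS condition in the subcritical range, Mountain Pass Theorem, and positivity by truncation followed by the Maximum Principle applied iteratively through the chain of equations. Two small points deserve clarification. First, your Young--Poincar\'e coefficient $\bigl(1-\tfrac{\gamma^{1/(m+1)}}{\lambda_1}\bigr)$ is a valid lower bound but not the sharp one: since the prefactor on the cross sum in \eqref{functional_extension} is $\gamma^{1/(m+1)}/(m+1)$ and each component appears in exactly two of the $m+1$ products, the sharp constant from this argument is $\bigl(1-\tfrac{2\gamma^{1/(m+1)}}{(m+1)\lambda_1}\bigr)$, which is what the paper records in Lemma~\ref{acotacion_sys_m}. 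Consequently the MP geometry actually persists for the larger range $\gamma^{1/(m+1)}<(m+1)\lambda_1/2$; the restriction $\gamma<\Lambda_1^*=\lambda_1^{m+1}$ is imposed by the nonexistence Lemma~\ref{cotaeigenvalue}, not forced by the mountain-pass estimate, so your closing remark about $\gamma<\lambda_1^{m+1}$ being ``precisely sharp for the MP geometry'' should be withdrawn (the bound is sufficient, not sharp). Second, the appeal to compactness of $(-\Delta)^{-1}:L^2\to L^2$ is out of place here: the cross terms $\int_\Omega v_iv_{i+1}\,dx$ in $\mathcal{J}_{\gamma,m}$ are ordinary $L^2$ bilinear forms (no inverse operator appears), and they pass to the limit along a weakly convergent PS subsequence simply by the compact embedding $H_0^1(\Omega)\hookrightarrow L^2(\Omega)$. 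Neither issue affects the validity of the argument under the stated hypothesis.
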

\begin{theorem}\label{Thsystem_m}
Assume $p=2^*-1$. Then, for every $\gamma\in (0,\Lambda_1^*)$,
there exists a positive solution to system \eqref{extension_system} provided $N\geq 7$.
\end{theorem}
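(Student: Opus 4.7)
The plan is to lift the two-component Mountain Pass argument of Section \ref{concomp2} to the $(m+1)$-component setting, handling the cyclic cross-terms $\int uv_1$, $\int uv_m$ and $\int v_iv_{i+1}$ in a uniform way. I would first establish the analogues of Lemma \ref{lezero_sys} and Lemma \ref{acotacion_sys} for $\mathcal{J}_{\gamma,m}$. Applying Young's inequality to each cross-term, and using the fact that every component of $\mathcal{U}=(u,v_1,\ldots,v_m)$ appears in exactly two such terms (the chain $u\to v_1\to\cdots\to v_m\to u$ is a cycle of length $m+1$), the indefinite quadratic part is controlled by $\|u\|_{L^2}^2+\sum\|v_i\|_{L^2}^2\le \lambda_1^{-1}\bigl(\|u\|_{H_0^1}^2+\sum\|v_i\|_{H_0^1}^2\bigr)$. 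For $\gamma\in(0,\Lambda_1^*)$ one has $\gamma^{1/(m+1)}<\lambda_1\le\tfrac{(m+1)\lambda_1}{2}$, so the coefficient $\tfrac{1}{2}-\tfrac{\gamma^{1/(m+1)}}{(m+1)\lambda_1}$ is strictly positive. This yields the local minimum of $\mathcal{J}_{\gamma,m}$ at the origin and, after the standard combination with $\mu\langle\mathcal{J}_{\gamma,m}'(\mathcal{U}_n)|\mathcal{U}_n/\|\mathcal{U}_n\|\rangle$ for $\tfrac{1}{p+1}<\mu<\tfrac{1}{2}$, the boundedness of every Palais--Smale sequence in $(H_0^1(\Omega))^{m+1}$.

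Next I would prove the PS condition below $c^*=\tfrac{1}{N}S_N^{N/2}$ by exactly the Lions-type concentration-compactness scheme used in Lemma \ref{PScondition}. Bounded PS sequences produce measures $\mu,\tilde\mu_1,\ldots,\tilde\mu_m,\nu$ as in \eqref{licom}, and testing $\mathcal{J}_{\gamma,m}'(\mathcal{U}_n)$ against $(\varphi_{j,\varepsilon}u_n,\varphi_{j,\varepsilon}v_{1,n},\ldots,\varphi_{j,\varepsilon}v_{m,n})$ and letting $\varepsilon\to 0$ yields the balance $\nu_j=\mu_j+\sum_{i=1}^m\tilde\mu_{i,j}$. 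Together with \eqref{sobolev2} this forces $\nu_j\ge S_N^{N/2}$ at any concentration point, contradicting $c<c^*$.

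The main obstacle is the critical-level estimate, i.e.\ the analogue of Lemma \ref{level}. I would test with
\begin{equation*}
\tilde{\mathcal{U}}_\varepsilon=\bigl(M\phi_\varepsilon,\,M\rho\,\phi_\varepsilon,\,\ldots,\,M\rho\,\phi_\varepsilon\bigr),\qquad \rho=\varepsilon^\alpha,
\end{equation*}
using the same weight $\rho$ for all auxiliary components. Using \eqref{esn2} and the normalization \eqref{norms2}, the maximum over $t$ of $\mathcal{J}_{\gamma,m}(t\tilde{\mathcal{U}}_\varepsilon)$ equals
\begin{equation*}
\frac{1}{N}\left(A_\rho\bigl[S_N+O(\varepsilon^{N-2})\bigr]-\frac{2B_\rho\gamma^{1/(m+1)}}{m+1}F(\varepsilon)\right)^{N/2},
\end{equation*}
with $A_\rho=1+m\rho^2$, $B_\rho=2\rho+(m-1)\rho^2$ and $F(\varepsilon)=\int_\Omega\phi_\varepsilon^2\,dx$. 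The requirement that this be strictly below $\tfrac{1}{N}S_N^{N/2}$ reduces, after subtracting $S_N$, to
\begin{equation*}
mS_N\varepsilon^{2\alpha}+O(\varepsilon^{N-2})<\frac{2\gamma^{1/(m+1)}}{m+1}\bigl(2\varepsilon^\alpha+(m-1)\varepsilon^{2\alpha}\bigr)F(\varepsilon).
\end{equation*}
For $N\ge 5$ the estimate \eqref{esl2} gives $F(\varepsilon)\sim\varepsilon^2$, so the right-hand side is of order $\varepsilon^{\alpha+2}$ while the left-hand side is of order $\min(2\alpha,N-2)$. The pair of strict inequalities $\alpha+2<2\alpha$ and $\alpha+2<N-2$ admits a common solution $\alpha\in(2,N-4)$ if and only if $N\ge 7$, which is exactly the dimensional restriction in the statement and is the reason the hypothesis cannot be relaxed with this method.

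With these ingredients the Mountain Pass Theorem produces a nontrivial critical point $\mathcal{U}\in(H_0^1(\Omega))^{m+1}$ of $\mathcal{J}_{\gamma,m}$ at a level $c_\varepsilon\in(0,c^*)$. Positivity is obtained as in Section \ref{concomp2} by working with the truncated functional $\mathcal{J}_{\gamma,m}^+(\mathcal{U})=\mathcal{J}_{\gamma,m}(u^+,v_1^+,\ldots,v_m^+)$ to produce a nonnegative critical point, and then propagating strict positivity along the chain by the strong Maximum Principle applied successively to $-\Delta v_m=\gamma^{1/(m+1)}u$, $-\Delta v_{m-1}=\gamma^{1/(m+1)}v_m$, and so on down to $v_1$, yielding a positive solution of \eqref{extension_system}.
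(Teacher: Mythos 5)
Your proposal follows the same route as the paper: establish the Mountain Pass geometry and the boundedness of PS sequences for $\mathcal{J}_{\gamma,m}$ via Young's and Poincar\'e's inequalities applied to the $m+1$ cyclic cross-terms, prove the PS condition below $c^*=\frac{1}{N}S_N^{N/2}$ by the Lions concentration-compactness argument with the balance $\nu_j=\mu_j+\sum_i\tilde\mu_{i,j}$, and show that the minimax level drops below $c^*$ by testing with $(M\phi_\varepsilon,M\rho\phi_\varepsilon,\ldots,M\rho\phi_\varepsilon)$ with $\rho=\varepsilon^\alpha$, leading exactly to the constraint $\alpha\in(2,N-4)$ (equivalently $N\geq7$). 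The computation $A_\rho=1+m\rho^2$, $B_\rho=2\rho+(m-1)\rho^2$, the coercivity coefficient $\frac{1}{2}-\frac{\gamma^{1/(m+1)}}{(m+1)\lambda_1}$, and the positivity propagation along the chain via the strong Maximum Principle all coincide with the paper's Lemmas \ref{lezero_sys_m}, \ref{acotacion_sys_m}, \ref{PScondition_sys}, and \ref{level_sys}.
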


We start determining the interval of values of the parameter $\gamma>0$ compatible with
existence of positive solutions related to problem \eqref{extension2}.

\begin{lemma}\label{cotaeigenvalue}
Equation \eqref{extension2} does not possess a positive solution when
\begin{equation}
\gamma\geq\Lambda_1^*.
\end{equation}
\end{lemma}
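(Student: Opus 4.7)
The plan is to follow verbatim the strategy of Lemma \ref{cota}, simply replacing the operator $(-\Delta)^{-1}$ by $(-\Delta)^{-m}$ and exploiting the spectral characterization of $\Lambda_1^*$. The positivity of $u$ together with the positivity of $\varphi_1$ will furnish the key strict inequality, and the eigenvalue relation $\Lambda_1^* = \lambda_1^{m+1}$ (recorded just before the statement) will do the rest.

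First I would take $\varphi_1>0$, the first Dirichlet eigenfunction of $-\Delta$ in $\Omega$, as a test function in \eqref{extension2}. Multiplying the equation by $\varphi_1$ and integrating yields
\begin{equation*}
\int_{\Omega} \varphi_1 (-\Delta) u \, dx = \gamma \int_{\Omega} \varphi_1 (-\Delta)^{-m} u \, dx + \int_{\Omega} |u|^{p-1} u\, \varphi_1\, dx.
\end{equation*}
Since $u>0$ and $\varphi_1>0$, the last term is strictly positive, so
\begin{equation*}
\int_{\Omega} \varphi_1 (-\Delta) u \, dx > \gamma \int_{\Omega} \varphi_1 (-\Delta)^{-m} u \, dx.
\end{equation*}

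Next I would integrate by parts on the left-hand side, using $-\Delta \varphi_1 = \lambda_1 \varphi_1$, and invoke the self-adjointness of the compact operator $(-\Delta)^{-m}$ on the right. Because $\varphi_1$ is an eigenfunction, iterating $(-\Delta)^{-1}\varphi_1 = \lambda_1^{-1}\varphi_1$ gives $(-\Delta)^{-m}\varphi_1 = \lambda_1^{-m}\varphi_1$. Substituting produces
\begin{equation*}
\lambda_1 \int_{\Omega} u\,\varphi_1\, dx > \gamma \int_{\Omega} u\, (-\Delta)^{-m}\varphi_1\, dx = \frac{\gamma}{\lambda_1^{m}}\int_{\Omega} u\,\varphi_1\, dx.
\end{equation*}
Since $\int_{\Omega} u\,\varphi_1\, dx > 0$ we can divide and conclude $\gamma < \lambda_1^{m+1} = \Lambda_1^*$, contradicting the hypothesis.

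I do not anticipate any genuine obstacle: the only point requiring mild care is the justification of the integration by parts and of the identity $(-\Delta)^{-m}\varphi_1 = \lambda_1^{-m}\varphi_1$, which follow from the spectral definition of the polyharmonic operators recalled in the introduction and from the fact that $(-\Delta)^{-1}$ is a positive, self-adjoint, compact integral operator on $L^2(\Omega)$. All other steps are formally identical to the case $m=1$ already treated in Lemma \ref{cota}.
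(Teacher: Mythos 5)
Your proposal is correct and follows precisely the argument the paper intends: the paper's proof of this lemma is just a one-line sketch ("Using as a test function...the first eigenfunction $\varphi_1$...together with $\Lambda_1^*=\lambda_1^{m+1}$ the result follows"), and your expansion carries out exactly that computation, mirroring Lemma~\ref{cota} with $(-\Delta)^{-1}$ replaced by $(-\Delta)^{-m}$. No gaps; the key identity $(-\Delta)^{-m}\varphi_1=\lambda_1^{-m}\varphi_1$ is correctly justified from the spectral definition.
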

\begin{proof}
Using as a test function in \eqref{extension2} the first eigenfunction $\varphi_1$ associated
with the first eigenvalue $\lambda_1$ for the Laplacian operator $(-\Delta)$ with homogeneous
Dirichlet boundary conditions together with $\Lambda_1^*=\lambda_1^{m+1}$ the result follows.
\end{proof}
Next we deal with the MPT conditions. We state the analogous results to those of the case $m=1$.
Since the proofs of the next results rely on the ideas developed for the case $m=1$, we will only
remark the main differences, if any.
\begin{lemma}
\label{lezero_sys_m}
The functional $\mathcal{J}_{\gamma,m}\left(\mathcal{U}\right)$ has the MPT geometry.
\end{lemma}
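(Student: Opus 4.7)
The plan is to mimic the proof of Lemma \ref{lezero_sys}, which handles the case $m=1$, adapting the Young/Poincaré bookkeeping to the $m+1$ bilinear cross terms that now appear in the functional $\mathcal{J}_{\gamma,m}$ defined by \eqref{functional_extension}. I would fix a competitor $\mathcal{G}=(g,h_1,\ldots,h_m)\in (H_0^1(\Omega))^{m+1}$ with the normalization $\|g\|_{L^{p+1}(\Omega)}=1$ and study $\mathcal{J}_{\gamma,m}(t\mathcal{G})$ for $t>0$.

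The main estimate comes from applying Young's inequality $|ab|\leq \tfrac12(a^2+b^2)$ to each of the $m+1$ cross terms $\int gh_1\,dx$, $\int gh_m\,dx$ and $\int h_ih_{i+1}\,dx$ for $i=1,\ldots,m-1$. The key combinatorial observation, which I would verify by direct inspection, is that each of the variables $g,h_1,\ldots,h_m$ appears in exactly two of these cross terms (namely: $g$ in $gh_1$ and $gh_m$; $h_1$ in $gh_1$ and $h_1h_2$; $h_m$ in $h_{m-1}h_m$ and $gh_m$; each interior $h_i$ in $h_{i-1}h_i$ and $h_ih_{i+1}$), so summing the Young bounds gives
\begin{equation*}
\Bigl|\int_\Omega gh_1\,dx+\int_\Omega gh_m\,dx+\sum_{i=1}^{m-1}\int_\Omega h_ih_{i+1}\,dx\Bigr|\leq \int_\Omega g^2\,dx+\sum_{i=1}^m\int_\Omega h_i^2\,dx.
\end{equation*}
Combining this with Poincaré's inequality $\|\cdot\|_{L^2(\Omega)}^2\leq \lambda_1^{-1}\|\cdot\|_{H_0^1(\Omega)}^2$ and the Sobolev inequality $1=\|g\|_{L^{p+1}(\Omega)}\leq C\|g\|_{H_0^1(\Omega)}$ should produce, exactly as in the display of Lemma \ref{lezero_sys}, a lower bound of the form
\begin{equation*}
\mathcal{J}_{\gamma,m}(t\mathcal{G})\geq \Bigl(\tfrac12\bigl(1-\tfrac{2\gamma^{1/(m+1)}}{(m+1)\lambda_1}\bigr)t^2-\tfrac{C'}{p+1}t^{p+1}\Bigr)\Bigl(\|g\|_{H_0^1(\Omega)}^2+\sum_{i=1}^m\|h_i\|_{H_0^1(\Omega)}^2\Bigr).
\end{equation*}
Since $\gamma<\Lambda_1^*=\lambda_1^{m+1}$ gives $\gamma^{1/(m+1)}<\lambda_1$, one verifies $\tfrac{2\gamma^{1/(m+1)}}{(m+1)\lambda_1}<\tfrac{2}{m+1}\leq 1$ for every $m\geq 1$, so the coefficient of $t^2$ is strictly positive and $\mathcal{J}_{\gamma,m}(t\mathcal{G})>\mathcal{J}_{\gamma,m}(0,\ldots,0)=0$ for all $t>0$ sufficiently small, uniformly in the normalized competitor.

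To complete the mountain-pass geometry I would exhibit a point at which the functional is negative by taking $\hat{\mathcal{G}}=(\varphi_1,0,\ldots,0)$, where $\varphi_1$ is the first Dirichlet eigenfunction of $-\Delta$: this choice annihilates every cross term and reduces the functional to the one-variable profile $\tfrac{t^2}{2}\|\varphi_1\|_{H_0^1(\Omega)}^2-\tfrac{t^{p+1}}{p+1}\|\varphi_1\|_{L^{p+1}(\Omega)}^{p+1}$, which tends to $-\infty$ as $t\to\infty$ since $p>1$. Choosing $t_1$ large and setting $\hat{\mathcal{U}}:=t_1\hat{\mathcal{G}}$ then supplies the required point with $\mathcal{J}_{\gamma,m}(\hat{\mathcal{U}})<0$.

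The only step that requires any genuine care is the combinatorial bookkeeping in the Young-inequality estimate: the prefactor $\tfrac{1}{m+1}$ in the definition \eqref{functional_extension} of the bilinear coupling is exactly what absorbs the two-fold appearance of each variable among the cross terms and keeps the quadratic part of $\mathcal{J}_{\gamma,m}$ coercive on $(H_0^1(\Omega))^{m+1}$ throughout the sharp range $\gamma\in(0,\Lambda_1^*)$ identified by Lemma \ref{cotaeigenvalue}. No concentration or dimensional restrictions enter at this stage; those are relegated to the Palais--Smale analysis that presumably follows in Theorem \ref{Thsystem_m}.
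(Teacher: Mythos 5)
Your proposal is correct and takes essentially the same route as the paper, which states that the argument is a direct adaptation of Lemma~\ref{lezero_sys} and omits the details. In particular, your combinatorial observation that each of the $m+1$ variables occurs in exactly two of the $m+1$ cross terms is precisely what turns the Young/Poincar\'e bound into the coefficient $\bigl(1-\tfrac{2\gamma^{1/(m+1)}}{(m+1)\lambda_1}\bigr)$ that the paper itself records in the proof of Lemma~\ref{acotacion_sys_m}; your shortcut of taking $(\varphi_1,0,\ldots,0)$ for the negative endpoint is an inconsequential simplification of the paper's upper-bound argument.
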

\begin{proof}
The proof is similar to the proof of Lemma \ref{lezero_sys} so we omit the details.
\end{proof}
\begin{lemma}\label{acotacion_sys_m}
Let $\mathbb{E}_m:=H_0^1(\Omega)\times H_0^1(\Omega)\times\ldots\times H_0^1(\Omega)$ and
$\left\{\mathcal{U}_n\right\}=\left\{(u_n,v_{1,n},\ldots,v_{m,n})\right\}\subset\mathbb{E}_m$ be a PS sequence
for the functional $\mathcal{J}_{\gamma,m}$, i.e.
\begin{equation*}
\mathcal{J}_{\gamma,m}(\mathcal{U}_n) \rightarrow c,\quad    \mathcal{J}_{\gamma,m}'(\mathcal{U}_n) \rightarrow 0,\quad \hbox{as}\quad n\to \infty.
\end{equation*}
Then,
\begin{equation*}
\left\{\mathcal{U}_n\right\}\quad \hbox{is bounded in}\quad \mathbb{E}_m.
\end{equation*}

\end{lemma}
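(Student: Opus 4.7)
The plan is to adapt verbatim, with the necessary bookkeeping for the $m+1$ unknowns, the strategy used in Lemma \ref{acotacion_sys} for the case $m=1$. First I would exploit $\mathcal{J}_{\gamma,m}'(\mathcal{U}_n)\to 0$ in $\mathbb{E}_m'$ by testing against $\mathcal{U}_n/\|\mathcal{U}_n\|_{\mathbb{E}_m}$ to obtain
\begin{equation*}
\|\nabla u_n\|_{L^2}^2+\sum_{i=1}^m\|\nabla v_{i,n}\|_{L^2}^2
-\frac{2\gamma^{1/(m+1)}}{m+1}\mathcal{C}(\mathcal{U}_n)
-\int_{\Omega}|u_n|^{p+1}dx=\|\mathcal{U}_n\|_{\mathbb{E}_m}\cdot o(1),
\end{equation*}
where $\mathcal{C}(\mathcal{U}_n)=\int_\Omega u_nv_{1,n}dx+\int_\Omega u_nv_{m,n}dx+\sum_{i=1}^{m-1}\int_\Omega v_{i,n}v_{i+1,n}dx$ collects the chain of cross products, together with the analogous identity coming from $\mathcal{J}_{\gamma,m}(\mathcal{U}_n)=c+o(1)$.

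Next I would form, as in Lemma \ref{acotacion_sys}, the combination $\mathcal{J}_{\gamma,m}(\mathcal{U}_n)-\mu\langle\mathcal{J}_{\gamma,m}'(\mathcal{U}_n),\mathcal{U}_n\rangle/\|\mathcal{U}_n\|_{\mathbb{E}_m}$ with $\mu\in\bigl(\tfrac{1}{p+1},\tfrac12\bigr)$. This choice makes the coefficient of $\int|u_n|^{p+1}dx$ nonpositive, so that term can be dropped, and leaves
\begin{equation*}
\Bigl(\tfrac12-\mu\Bigr)\|\mathcal{U}_n\|_{\mathbb{E}_m}^2
-(1-2\mu)\frac{\gamma^{1/(m+1)}}{m+1}\mathcal{C}(\mathcal{U}_n)
\le c+\|\mathcal{U}_n\|_{\mathbb{E}_m}\cdot o(1).
\end{equation*}

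The core of the argument is the control of $\mathcal{C}(\mathcal{U}_n)$. Applying Young's inequality $2|ab|\le a^2+b^2$ to each of the $m+1$ cross products and using the cyclic structure (each of $u,v_1,\dots,v_m$ appears in exactly two products), one obtains
\begin{equation*}
|\mathcal{C}(\mathcal{U}_n)|\le \|u_n\|_{L^2}^2+\sum_{i=1}^m\|v_{i,n}\|_{L^2}^2,
\end{equation*}
and then Poincar\'e's inequality bounds the right-hand side by $\lambda_1^{-1}\|\mathcal{U}_n\|_{\mathbb{E}_m}^2$. Substituting, the inequality becomes
\begin{equation*}
(1-2\mu)\Bigl[\tfrac12-\frac{\gamma^{1/(m+1)}}{(m+1)\lambda_1}\Bigr]\|\mathcal{U}_n\|_{\mathbb{E}_m}^2
\le c+\|\mathcal{U}_n\|_{\mathbb{E}_m}\cdot o(1).
\end{equation*}

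The main (mild) obstacle is to verify that the bracket is positive under the hypothesis $\gamma<\Lambda_1^*=\lambda_1^{m+1}$. Since $\gamma<\lambda_1^{m+1}$ gives $\gamma^{1/(m+1)}<\lambda_1$, and $m+1\ge 2$, we get $\gamma^{1/(m+1)}/((m+1)\lambda_1)<1/(m+1)\le 1/2$, so the bracket is strictly positive. Together with $1-2\mu>0$, this yields a quadratic inequality $A\|\mathcal{U}_n\|_{\mathbb{E}_m}^2\le c+\|\mathcal{U}_n\|_{\mathbb{E}_m}\cdot o(1)$ with $A>0$, from which the boundedness of $\{\mathcal{U}_n\}$ in $\mathbb{E}_m$ follows at once. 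For $m=1$ the estimate is exactly the one in Lemma \ref{acotacion_sys}; for $m\ge 2$ the bound $\gamma<\Lambda_1^*$ is in fact strictly stronger than what Young's inequality alone requires, so no sharper estimate is needed.
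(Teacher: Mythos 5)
Your proposal is correct and follows essentially the same route the paper takes: the paper's own proof consists of the single phrase ``Arguing as in the proof of Lemma \ref{acotacion_sys}'' followed by the final displayed inequality, and your argument is precisely the careful unpacking of that phrase, with the Young-plus-Poincar\'e bookkeeping for the chain of $m+1$ cross terms carried out explicitly. Your coefficient $(1-2\mu)\bigl[\tfrac12-\gamma^{1/(m+1)}/((m+1)\lambda_1)\bigr]$ is algebraically identical to the paper's $\bigl(\tfrac12-\mu\bigr)\bigl(1-2\gamma^{1/(m+1)}/((m+1)\lambda_1)\bigr)$, and your closing remark that $\gamma<\Lambda_1^*$ is stronger than needed for $m\ge 2$ is a correct (and nice) observation.
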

\begin{proof}
Arguing as in the proof of Lemma \ref{acotacion_sys} we find,
\begin{align*}
&(m+1)\left(\frac{1}{2}-\mu\right)\left(1-\frac{2\gamma^{\frac{1}{m+1}}}{(m+1)\lambda_1}\right)
\left(\|u_n\|_{H_0^1(\Omega)}^2+\sum_{i=1}^{m}\|v_{i,n}\|_{H_0^1(\Omega)}^2\right)\\
&\leq (m+1)c+\left(\|u_n\|_{H_0^1(\Omega)}+\sum_{i=1}^{m}\|v_{i,n}\|_{H_0^1(\Omega)}\right)\cdot o(1).
\end{align*}
Keeping in mind Lemma \ref{cotaeigenvalue}, it follows that
\begin{equation*}
\left(\frac{1}{2}-\mu\right)\left(1-\frac{2\gamma^{\frac{1}{m+1}}}{(m+1)\lambda_1}\right)>0,
\end{equation*}
and we conclude the boundedness of the sequence $\{\mathcal{U}_n\}$ in $\mathbb{E}_m$.
\end{proof}
\begin{proof}[Proof of Theorem \ref{Thsystem_m_subcritical}.]\hfill\break
Combining Lemma \ref{lezero_sys_m} and Lemma \ref{acotacion_sys_m} together with the
Rellich-Kondrachov Theorem the hypotheses of the Mountain Pass Theorem are fulfilled and we
conclude as in the proof of Theorem \ref{Thsystem_subcritical}.
\end{proof}
To finish, we deal with the critical case $p=2^*-1$. As it was done in previous sections, with
the aid of a concentration-compactness argument we will prove that the PS condition is satisfied
for any level below the critical level
\begin{equation*}
c^*=\frac{1}{N}S_N^{N/2}.
\end{equation*}
Let us observe that the critical level $c^*$ is independent of the order of the inverse operator
involved in problem \eqref{extension2} as it coincides with the critical level for problem
\eqref{eq:nonlocal}.
\begin{lemma}\label{PScondition_sys}
The functional $\mathcal{J}_{\gamma,m}$ defined by \eqref{functional_extension} satisfies the Palais-Smale
condition for any level $c$ below the critical level $c^*$.
\end{lemma}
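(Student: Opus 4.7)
The plan is to adapt the argument of Lemma \ref{PScondition} to the $(m+1)$-component setting, replacing the pair $(u_n,v_n)$ by the tuple $\mathcal{U}_n=(u_n,v_{1,n},\ldots,v_{m,n})$. Given a Palais--Smale sequence $\{\mathcal{U}_n\}\subset\mathbb{E}_m$ at level $c<c^*$, Lemma \ref{acotacion_sys_m} yields the necessary boundedness in $\mathbb{E}_m$, and up to a subsequence I obtain $\mathcal{U}_n\rightharpoonup \mathcal{U}_0$ weakly in $\mathbb{E}_m$, strongly in $L^q(\Omega)^{m+1}$ for every $q\in[1,2^*)$, and a.e.\ in $\Omega$.

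The key structural observation is that the only supercritical ingredient in $\mathcal{J}_{\gamma,m}$ is $|u|^{2^*}$: all coupling terms $\int uv_1$, $\int uv_m$ and $\int v_iv_{i+1}$ are quadratic and hence continuous for the strong $L^2$-topology. Consequently concentration can only affect the $u$-slot, and I apply Lions' Concentration--Compactness Principle (Lemma \ref{CC}) solely to $\{u_n\}$, producing an at most countable set $\{x_j\}_{j\in I}\subset\overline{\Omega}$ together with positive masses $\mu_j,\nu_j$ for which the analogue of \eqref{deltas2} holds together with the Sobolev bound $\mu_j\geq S_N\nu_j^{2/2^*}$.

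For each $j\in I$ and each cut-off $\varphi_{j,\varepsilon}$ as in \eqref{cutoff2}, I test the PS condition against $(\varphi_{j,\varepsilon}u_n,0,\ldots,0)\in\mathbb{E}_m$, whose norm is uniformly bounded. Only the $u$-derivative of $\mathcal{J}_{\gamma,m}$ contributes, producing
\begin{equation*}
\int_\Omega\varphi_{j,\varepsilon}|\nabla u_n|^2dx+\int_\Omega u_n\nabla u_n\cdot\nabla\varphi_{j,\varepsilon}dx-\tfrac{\gamma^{1/(m+1)}}{m+1}\int_\Omega\varphi_{j,\varepsilon}u_n(v_{1,n}+v_{m,n})dx-\int_\Omega\varphi_{j,\varepsilon}|u_n|^{2^*}dx\to 0.
\end{equation*}
Letting first $n\to\infty$ and then $\varepsilon\to 0$, the coupling terms in $v_{1,n}$ and $v_{m,n}$ vanish by strong $L^2$-convergence followed by dominated convergence on the shrinking support of $\varphi_{j,\varepsilon}$, while the gradient cross term is controlled by the standard Cauchy--Schwarz estimate of \cite{Lions}. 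I am left with $\mu_j-\nu_j=0$, which together with $\mu_j\geq S_N\nu_j^{2/2^*}$ forces either $\nu_j=0$ or $\nu_j\geq S_N^{N/2}$.

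To exclude the concentration alternative I compute $\mathcal{J}_{\gamma,m}(\mathcal{U}_n)-\tfrac12\langle\mathcal{J}_{\gamma,m}'(\mathcal{U}_n)|\mathcal{U}_n\rangle$: all gradient and bilinear coupling terms cancel and only the $L^{2^*}$ contribution survives, yielding in the limit
\begin{equation*}
c=\left(\tfrac12-\tfrac{1}{2^*}\right)\left(\int_\Omega|u_0|^{2^*}dx+\sum_{j\in I}\nu_j\right)\geq \tfrac{1}{N}S_N^{N/2}=c^*
\end{equation*}
as soon as some $\nu_j>0$, contradicting the hypothesis $c<c^*$. Hence $\nu_j=0$ for every $j\in I$, and combining this with the PS equations in each slot together with Rellich--Kondrachov for the $v_i$-components upgrades weak to strong convergence in $\mathbb{E}_m$. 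The point requiring the most care is the verification that every coupling between distinct components of $\mathcal{U}_n$ is continuous under weak convergence in $\mathbb{E}_m$; this is precisely guaranteed by the fact that all such terms are quadratic, so no further estimates of the type of Lemma \ref{lees22} are needed at this stage.
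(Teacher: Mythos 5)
Your proof is correct. It differs from the paper's in a small but genuine way. The paper tests the PS condition against the full tuple $(\varphi_{j,\varepsilon}u_n,\varphi_{j,\varepsilon}v_{1,n},\ldots,\varphi_{j,\varepsilon}v_{m,n})$, requires Lions' decomposition for each $|\nabla v_{i,n}|^2$ with masses $\tilde\mu_{i,j}$, and obtains $\nu_j=\mu_j+\sum_{i=1}^m\tilde\mu_{i,j}$; it then discards the nonnegative $\tilde\mu_{i,j}$ terms to conclude $\nu_j\geq\mu_j\geq S_N\nu_j^{2/2^*}$. You instead test only against $(\varphi_{j,\varepsilon}u_n,0,\ldots,0)$, exploiting the observation that every coupling term in $\mathcal{J}_{\gamma,m}$ is quadratic (hence compact), so concentration is confined to the $u$-slot and Lions' lemma needs to be invoked only for $\{u_n\}$. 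This yields the sharper identity $\mu_j=\nu_j$ and a marginally leaner argument, avoiding any bookkeeping for the $v_i$-components; in particular it makes transparent that the $v_i$'s cannot concentrate (they solve linear equations with compactly convergent data). The final energy computation $\mathcal{J}_{\gamma,m}-\tfrac12\langle\mathcal{J}_{\gamma,m}'|\cdot\rangle$ and the conclusion $c\geq c^*$ coincide with the paper's. Both approaches are equally rigorous on the last step (upgrading no-concentration to strong $\mathbb{E}_m$-convergence), which each treats briefly.
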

\begin{proof}
Let $\left\{\mathcal{U}_n\right\}=\left\{(u_n,v_{1,n},\ldots,v_{m,n})\right\}\subset\mathbb{E}_m$ be
a PS sequence of level $c<c^*$. Because of Lemma \ref{acotacion_sys_m} and
Lemma \ref{CC}, we can replicate the steps of the proof of Lemma \ref{PScondition} incorporating
the slight difference that, instead \eqref{cons:concentration_system}, we find now
\begin{equation}\label{cons:concentracion_sys}
\nu_j = \mu_j+\sum_{i=1}^{m}\tilde\mu_{i,j}.
\end{equation}
with
\begin{equation}\label{mesconsys}
\mu_j\geq S_N \nu_j^{2/2^*}.
\end{equation}
Then, either the PS sequence has a convergent subsequence or it concentrates around
some of the points $x_j$. In other words, $\nu_j=\mu_j=\tilde\mu_{i,j}=0$, or there exists some
$\nu_j>0$ such that, thanks to \eqref{cons:concentracion_sys} and \eqref{mesconsys},
$\nu_j \geq S_N^{N/2}$. In case of having concentration,
\begin{align*}
c&=\lim_{n \to \infty} \mathcal{J}_{\gamma,m}(\mathcal{U}_n)
  =\lim_{n \to \infty} \mathcal{J}_{\gamma,m}(\mathcal{U}_n)
                         -\frac{1}{2}\langle\mathcal{J}_{\gamma,m}(\mathcal{U}_n)| \mathcal{U}_n\rangle\\
 &=\left(\frac{1}{2}-\frac{1}{2^*}\right)\int_{\Omega}|u_0|^{2^*}dx+\left(\frac{1}{2}-\frac{1}{2^*}\right)\nu_j\\
 &\geq \frac{1}{N}S_N^{N/2}=c^*,
\end{align*}
in contradiction with the hypotheses $c<c^*$.
\end{proof}
Finally, we show that we can obtain a path for the
functional $\mathcal{J}_{\gamma,m}$ under the critical level $c^*$. Following the ideas of the previous
sections, we will assume test functions of the form
\begin{equation}\label{test3}
\tilde{\mathcal{U}}_{\varepsilon}=(\tilde{u}_{\varepsilon},\tilde{v}_{1,\varepsilon},\ldots,\tilde{v}_{m,\varepsilon})
                                 =(M\phi_{\varepsilon},M\rho\phi_{\varepsilon},\ldots,M\rho\phi_{\varepsilon}),
\end{equation}
with $M>0$ a sufficiently large constant so that $\mathcal{J}_{\gamma,m}(\tilde{\mathcal{U}}_{\varepsilon})<0$,
$\rho$ is positive term to be determined as in the case $m=1$, and $u_{j,\varepsilon}$ are the family of functions
defined by \eqref{extrem}. As performed above we will consider $x_j=0$. Then, under the previous
construction, let us define the set of paths
\begin{equation*}
\Gamma_{\varepsilon}:=\{g\in C([0,1],\mathbb{E}_m)\,;\, g(0)=\overline{0},\; g(1)=\tilde{\mathcal{U}}_{\varepsilon}\},
\end{equation*}
and consider the minimax value
\begin{equation*}
c_{\varepsilon}=\inf_{g\in\Gamma_{\varepsilon}} \max_{t \in [0,1]} \mathcal{J}_{\gamma,m}(g(t)).
\end{equation*}
Next, we  check that any level $c_{\varepsilon}$ is always below $c^*$ provided $\varepsilon>0$ is small enough.
This is done thanks to Lemma \ref{lees}.

\begin{lemma}\label{level_sys}
Assume $p=2^*-1$ and $N\geq7$. Then, there exists $\varepsilon>0$ small enough such that,
\begin{equation*}
\sup_{0\leq t\leq 1} \mathcal{J}_{\gamma,m}(t\tilde{\mathcal{U}}_{\varepsilon})< \frac{1}{N} S_N^{N/2}.
\end{equation*}
\end{lemma}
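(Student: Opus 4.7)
The plan is to mimic the proof of Lemma~\ref{level} almost verbatim, with the only new ingredients being the bookkeeping for the $m$ coupled components. Plugging the test tuple \eqref{test3} into \eqref{functional_extension} and using $\|\phi_\varepsilon\|_{L^{2^*}}=1$ together with \eqref{esn2} from Lemma~\ref{lees}, I compute
\begin{align*}
g(t):=\mathcal{J}_{\gamma,m}(t\tilde{\mathcal{U}}_\varepsilon)
&=\frac{t^2 M^2}{2}\bigl(1+m\rho^2\bigr)\bigl[S_N+O(\varepsilon^{N-2})\bigr]\\
&\quad -\frac{t^2 M^2\,\gamma^{\frac{1}{m+1}}}{m+1}\bigl[\,2\rho+(m-1)\rho^2\bigr]F(\varepsilon)
  -\frac{t^{2^*}M^{2^*}}{2^*},
\end{align*}
where $F(\varepsilon)=\int_\Omega \phi_\varepsilon^2\,dx$ as estimated in Lemma~\ref{lees}; here I used that the symmetric choice $\tilde v_{i,\varepsilon}=M\rho\phi_\varepsilon$ collapses the three coupling sums in \eqref{functional_extension} into the single bracket above.

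Since $g(t)\to-\infty$ as $t\to\infty$ and $g(t)>0$ for small $t>0$, the maximum is attained at an interior point $t_\varepsilon$, and a direct computation (as in Lemma~\ref{level}) yields
\begin{equation*}
g(t_\varepsilon)=\frac{1}{N}\!\left[(1+m\rho^2)[S_N+O(\varepsilon^{N-2})]
-\frac{2\gamma^{\frac{1}{m+1}}}{m+1}\bigl(2\rho+(m-1)\rho^2\bigr)F(\varepsilon)\right]^{N/2}.
\end{equation*}
Hence the desired bound $g(t_\varepsilon)<\frac{1}{N}S_N^{N/2}$ is equivalent, after expanding $(1+m\rho^2)S_N=S_N+m\rho^2 S_N$, to
\begin{equation*}
m\rho^2 S_N+O(\varepsilon^{N-2})+m\rho^2\,O(\varepsilon^{N-2})
<\frac{2\gamma^{\frac{1}{m+1}}}{m+1}\bigl(2\rho+(m-1)\rho^2\bigr)F(\varepsilon).
\end{equation*}

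I now set $\rho=\varepsilon^\alpha$ with $\alpha>0$ to be chosen, and use the estimate $F(\varepsilon)=C\varepsilon^2+O(\varepsilon^{N-2})$ valid for $N\geq 5$. The left-hand side is $O(\varepsilon^{\tau})$ with $\tau=\min\{2\alpha,N-2\}$, while the right-hand side is dominated, as $\varepsilon\to 0$, by a positive multiple of $\varepsilon^{\alpha+2}$ (the $\varepsilon^{2\alpha+2}$ contribution coming from $(m-1)\rho^2$ is lower order). Thus it suffices to choose $\alpha$ such that
\begin{equation*}
\min\{2\alpha,\,N-2\}>\alpha+2,
\end{equation*}
i.e. $\alpha>2$ and $\alpha<N-4$. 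This interval is nonempty precisely when $N\geq 7$, exactly as in Lemma~\ref{level}. Fixing any such $\alpha$ (for instance $\alpha=\frac{N-2}{2}$ when $N\geq 7$) yields the claim, and moreover shows $t_\varepsilon\geq\delta>0$ uniformly in $\varepsilon$, so that $t\tilde{\mathcal{U}}_\varepsilon$ actually sweeps out a path admissible in $\Gamma_\varepsilon$.

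The main (and only) obstacle is the handling of the cross-terms: at first glance the bracket $2\rho+(m-1)\rho^2$ looks harder than the $2\rho$ that appears in the $m=1$ case, but as observed above the quadratic term in $\rho$ is lower order when $\rho=\varepsilon^\alpha$, so the algebraic balance between $\varepsilon^{2\alpha}$, $\varepsilon^{N-2}$ and $\varepsilon^{\alpha+2}$ is identical to that in Lemma~\ref{level}. Consequently the dimensional threshold $N\geq 7$ is sharp for this argument, and no additional restriction is introduced by taking $m\geq 2$.
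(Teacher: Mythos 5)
Your proof is correct and follows essentially the same route as the paper: plug in the symmetric test tuple, reduce to the scalar inequality involving $m\rho^2 S_N$, $O(\varepsilon^{N-2})$ and the coupling term, set $\rho=\varepsilon^{\alpha}$, and observe that $\min\{2\alpha,N-2\}>\alpha+2$ forces $N\geq 7$. The paper discharges the last step by referring back to inequality \eqref{csi2} and the case analysis in Lemma~\ref{level}, whereas you rederive the condition $2<\alpha<N-4$ directly, which is a slightly cleaner way to obtain the same dimensional threshold.
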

\begin{proof}
Let us denote by $F(\varepsilon)$ the estimate \eqref{esl2} in Lemma \ref{lees}. Then, assuming the normalization \eqref{norms2}, we obtain
\begin{align*}
g(t):=&\mathcal{J}_{\gamma,m}(t\tilde{\mathcal{U}}_{\varepsilon})\\
     =&\left(\frac{1}{2}(1+m\rho^2)[S_N+O(\varepsilon^{N-2})]
           -\frac{\gamma^{\frac{1}{m+1}}}{m+1}(2\rho+(m-1)\rho^2)F(\varepsilon)\right)M^2t^2
           -\frac{M^{2^*}t^{2^*}}{2^*}.
\end{align*}
Proceeding a in the proof of Lemma \ref{level}, the proof will be completed if we can choose
$\rho>0$ such that the inequality,
\begin{equation*}
O(\varepsilon^{N-2})+m\rho^2S_N+m\rho^2O(\varepsilon^{N-2})
<2\frac{\gamma^{\frac{1}{m+1}}}{m+1}(2\rho+(m-1)\rho^2)F(\varepsilon),
\end{equation*}
holds true provided $\varepsilon>0$ is small enough. We take $\rho=\varepsilon^{\alpha}$ with $\alpha>0$
(to be determined) and $\displaystyle\tau=\min\{N-2,2\alpha,2\alpha+N-2\}=\min\{N-2,2\alpha\}$. Then,
since $O(\varepsilon^{\alpha}+\varepsilon^{2\alpha})=O(\varepsilon^{\alpha})$, we are left to prove that
for a constant $C>0$ the inequality,
\begin{equation}\label{lst2}
O(\varepsilon^{\tau})<C\varepsilon^{\alpha}F(\varepsilon),
\end{equation}
holds true provided $\varepsilon>0$ is small enough. Since inequality \eqref{lst2} coincides with \eqref{csi2} the
arguments performed in Lemma \ref{level} allow us to conclude.
\end{proof}

\begin{proof}{Proof of Theorem \ref{Thsystem_m}.}\hfill\break
Thanks to Lemma \ref{lezero_sys} and Lemma \ref{level},we find that
\begin{equation*}
c_{\varepsilon}\leq \sup_{t\geq 0} \mathcal{J}_{\gamma}(t\tilde{\mathcal{U}}_{\varepsilon})< \frac{1}{N} S_N^{N/2},
\end{equation*}
provided $\varepsilon>0$ is sufficiently small. Hence, combining Lemma \ref{lezero_sys_m} and
Lemma \ref{PScondition_sys} we can apply the Mountain Pass Theorem and conclude the existence of a critical
point $\mathcal{U}\in\mathbb{E}_m$. The rest follows as in the former cases.
\end{proof}

%%%%%%%%%%%%%%%%%%%%%%%%%%%%%%%%%%%%%%%%%%%%%%%%%%%%%%%%%%%%%%%%

\end{document}